\newtheorem{pro}{Proposition}[section]
\newtheorem{recursion}[pro]{Recursion}
\newtheorem{lem}[pro]{Lemma}
\newtheorem{theo}[pro]{Theorem}
\newtheorem{defi}[pro]{Definition}
\newtheorem{cor}[pro]{Corollary}
\newtheorem{remk}[pro]{Remark}
\newtheorem{assu}[pro]{Assumption}
\newcommand{\ep}{\varepsilon}
\newcommand{\om}{\omega}
\newcommand{\vp}{\varphi}
\newcommand{\la}{\lambda}
\newcommand{\al}{\alpha}
\newcommand{\lra}{\longrightarrow}
\newcommand{\lmt}{\longmapsto}
\newcommand{\nrm}[1]{\mbox{ $ \displaystyle \left\| {#1} \right\| $} }
\newcommand{\nrE}[1]{\mbox{ $ \nrm{ {#1} }_{E} $} }
\newcommand{\fk}[1]{ \left( {#1} \right) }
\newcommand{\bk}[1]{ \left\{ {#1} \right\} }
\newcommand{\btr}[1]{\mbox{ $ \left| {#1} \right| $ }}
\newcommand{\re}{{\mathbf{R}}}
\newcommand{\rep}{{\mathbf{R^+}}}
\newcommand{\za}{{\mathbf{N}}}
\newcommand{\jz}{{\mathbf{J}}}
\newcommand{\ji}{{\mathbf{I}}}
\newcommand{\nres}[1]{e^{-\frac{#1}{\lambda}}}
\newcommand{\nresl}[1]{\frac{e^{-\frac{#1}{\lambda}}}{\la}}
\newcommand{\ilm}[1]{  \lim_{ {#1} \to \infty}  }
\newcommand{\Funk}[5]{ \begin{array}{ccccc}
                       {#1} & : & {#2} & \lra & {#3} \\
                            &   & {#4} & \lmt & \displaystyle{#5} 
                       \end{array}                       }
\newcommand{\Jl}{J_{\lambda}}
\newcommand{\Jlw}{J_{\lambda}^{\om}}
\newcommand{\ul}{u_{\lambda}}
\newcommand{\Tlp}{T_{\la,\vp}}
\newcommand{\up}[2]{u_{#1,#2}}
\newcommand{\hw}{h^{\om}}
\newcommand{\Lw}{L^{\om}}
\newcommand{\dtl}{\fk{\partial_t}_{\la}}
\begin{document}

\title{Existence and Asymptotics of  Abstract Functional Differential Equations}
\author{Josef Kreulich\ Universit\"at Duisburg/Essen\\}
\date{}

\begin{abstract}
It is shown how the linear method of the Yosida-approximation of the derivative applies to solve possibly nonlinear abstract functional differential equations in both, the finite and infinite delay case. A generalization of the integral solution will provide regularity results. Moreover, this method applies to derive uniform convergence on the halfline, and therefore general results on boundedness and various types of asymptotic almost periodicity.
\end{abstract}

\keywords{47H06, 34K14, 34K30, 34K09}

\maketitle
\section{Introduction}
To prove the existence and stability of solutions to nonlinear functional differential equation for $r,T>0,$  and  $X$ a Banach space let
\begin{eqnarray*}
\ji &\in & \bk{(-\infty,0], [-r,0]},\\
E&=& BUC(\ji,X),\\
Y&\in& \bk{BUC((-\infty,T],X),C([-r,T],X), BUC(\re,X), BUC([-r,\infty),X)}.
\end{eqnarray*}

For $\jz \in \bk{[0,T],[0,\infty)},$  $\vp\in E,$ and $A(\cdot,\cdot)\subset X\times X$ a family of disspative operators we consider the functional differential equation

\begin{equation} \label{first-FDE}
\left\{
\begin{array}{rcl} 
u^{\prime}(t) &\in & A(t,u_t)u(t)+ \om u(t): \ t \in \jz \\
u_{|\ji}&=&\vp.
\end{array}
\right.
\end{equation}

The first results on this general type of equations were given by Kartsatos and Parrott \cite{KartsatosParrot}, where either the finite or infinite delay case was considered. Additionally, they found a so called generalized solution, for which they show that in the case of reflexive Banach spaces it becomes a strong solution.

In the present study we show how the method of Yosida approximation of the derivative applies to obtain existence of mild and integral solutions to (\ref{first-FDE}) in general Banach spaces, when considering the corresponding non-autonomous Cauchy problem,
\begin{equation} \label{first-Cauchy-Eq}
\left\{
\begin{array}{rcl} 
u^{\prime}(t) &\in & B(t)u(t)+ \om u(t): \ t \in \jz \\
u_{0}&=&\vp(0),
\end{array}
\right.
\end{equation}
where $B(t)=A(t,u_t)$. Therefore the regularity results for this class of solutions become applicable. These results are obtained for the finite and infinite delay case with a single proof.   
Additionally, results on the asymptotic behavior in the finite and infinite delay case are derived with the existence proof. This is a new aspect, since the solutions to a functional differential equation  with an initial state and the corresponding whole line problem are not necessarily asymptotically close. Therefore, in case of infinite delay a different approach to the asymptotics is needed. The main method is an application of the results provided in the study \cite{Kreulichevo}. That is, a reduction to methods coming with Yosida approximations of the derivative for non-autonomous Cauchy problems. This abstraction shows the power of linear analysis and their interaction with control functions given for nonlinear operators in the Assumptions \ref{IVFDE_2_og}, \ref{IVFDE_2}, \ref{general-IVE1}, and \ref{general-IVE2}.

\section{Abstract Functional Differential Equations} \label{Abstract-FDEs}

In the underlying paper the topic is the existence, stability and asymptotic behavior of nonlinear functional differential equations. With the notations as above, we have the canonical embedding:
$$
\Funk{\iota}{BUC(\ji\cup\jz,X)}{BUC(\ji,X)}{f}{f_{|\ji,}}
$$
with respect to the finite or infinite delay. For the definition of the equation let $\om \in \re$ and $\vp\in E.$ We consider  the functional differential equation:
\begin{equation} \label{gFDEonInterval}
\left\{
\begin{array}{rcl} 
u^{\prime}(t) &\in & A(t,u_t)u(t)+ \om u(t): \ t \in \jz \\
u_{|\ji}&=&\vp
\end{array}
\right.
\end{equation}

The main assumptions to solve the problem (\ref{gFDEonInterval}) on the operator $A$ are:
\begin{assu} \label{IVFDE_1}
The family $\bk{A(t,\vp):t \in \jz, \vp \in E}$ are m-dissipative operators.
\end{assu}
\begin{assu}\label{IVFDE_2_og}
There exist bounded and uniformly continuous functions $h,k:\jz \to X,$
 a constant $K_0 >0,$ and $L_1,L_2: \rep \lra \rep$ continuous and monotone non decreasing,
such that for $\la >0$ and $t_1,t_2\in \jz $ we have
\begin{eqnarray*}
\lefteqn{\nrm{x_1-x_2}} \\
&\le& \nrm{x_1-x_2 -\la(y_1-y_2)}+\la \nrm{h(t_1)-h(t_2)}L_1(\nrm{x_2}) \\
&&+ \la\nrm{k(t_1)-k(t_2)}L_2(\nrm{\vp_2})+K_0\la\nrE{\vp_1-\vp_2}
\end{eqnarray*}
for all $[x_i,y_i] \in A(t_i,\vp_i),$  $i=1,2, \ \vp_i\in E.$
\end{assu}

\begin{assu} \label{IVFDE_2}
There exist bounded and Lipschitz continuous functions $g,h,k:\jz \to X, $ a constant $K_0 >0,$ and $L_1,L_2: \rep \lra \rep$ continuous and monotone non decreasing, 
such that for $\la >0$ and $t_1,t_2\in \jz$ we have
\begin{eqnarray*}
\lefteqn{\nrm{x_1-x_2}} \\
&\le& \nrm{x_1-x_2 -\la(y_1-y_2)}+\la \nrm{h(t_1)-h(t_2)}L_1(\nrm{x_2}) \\
&&+ \la \nrm{g(t_1)-g(t_2)}\nrm{y_2}+\la \nrm{k(t_1)-k(t_2)}L_2(\nrm{\vp_2})+K_0\la\nrE{\vp_1-\vp_2}
\end{eqnarray*}
for all $[x_i,y_i] \in A(t_i,\vp_i),$  $i=1,2, \ \vp_i\in E.$
\end{assu}

For a dissipative operator $A\subset X\times X$ we have, 
$$ 
J_{\la}x=(I-\la A)^{-1}x \mbox{ and } A_{\la}x=\frac{1}{\la}(I-J_{\la})x.
$$
With the above we define,
$$
\btr{Ax}:=\lim_{\la\to 0}\nrm{A_{\la}x} \mbox{ and } \hat{D}:=\bk{x\in X :\btr{Ax}<\infty }.
$$

Due to Assumption \ref{IVFDE_2} we have for given $\vp_1,\vp_2 \in E$, $t,s \in [0,T],$ and $ x\in \hat{D}(A(t,\vp_1)$ 
\begin{eqnarray*}
\lefteqn{\btr{A(s,\vp_2)x}}\\
&\le& \btr{A(t,\vp_1)x}+\nrm{h(s)-h(t)}L(\nrm{x})+\nrm{g(s)-g(t)}\btr{A(t,\vp_1)x} \\
&& + \nrm{k(s)-k(t)}L_2(\nrm{\vp}) +K_0\nrm{\vp_1-\vp_2}_E.
\end{eqnarray*}
A similar inequality comes with Assumption \ref{IVFDE_2_og}. In consequence, $\hat{D}(A(t,\vp))=\hat{D_{\vp}}.$

\begin{remk}\label{A_t_vp_uniformly_bd}
We have 
$$ \hat{D_{\vp_1}} = \hat{D}_{\vp_2} \mbox { } \forall \vp_1,\vp_2\in E.$$ 
Moreover, if $x\in \hat{D}_{\vp_0}$ for some $\vp_0\in E$ and $F\subset E$ is bounded we find some $K>0,$ such that 
$$
\sup_{t\in\jz, \vp\in F} \btr{A(t,\vp)x} \le K.
$$
In view of the previous observations we define $\hat{D}=\hat{D}_{\vp}.$ Thus we have 
$$
\overline{D_{\vp}}\subset \overline{\hat{D}_{ \vp}}\subset \overline{\hat{D}}=:\overline{D}.
$$
\end{remk}

As we consider $A(t,u_t)+\om I$ we need the perturbed control inequality  of Assumption       
\ref{IVFDE_2_og} and  \ref{IVFDE_2}. This is computed similar to \cite[pp. 1056-1057]{Kreulichevo} and leads with
$$\Funk{h^{\om}}{\jz}{\fk{ X\times X,\nrm{\cdot}_1}}{t}{(h(t),\btr{\om}g(t))}
$$ and $L_1^{\om}=L(t)+t$ in case Assumption \ref{IVFDE_2} to the modified inequality: 

\begin{eqnarray} \label{perturbed_control}
\lefteqn{\nrm{x_1-x_2}} \nonumber  \\
&\le& \nrm{x_1-x_2 -\la(y_1-y_2)}+\la \nrm{h^{\om}(t_1)-h^{\om}(t_2)}L_1^{\om}(\nrm{x_2}) \\
&&+ \la \nrm{g(t_1)-g(t_2)}\nrm{y_2}+ \nrm{k(t_1)-k(t_2)}L_2(\nrm{\vp_2}) \nonumber \\
&&+K_0\la\nrE{\vp_1-\vp_2} +\la\om\nrm{x_1-x_2}, \nonumber
\end{eqnarray}
for $t_i\in \jz,$ $\vp_i\in E,$ and  $(x_i,y_i) \in A(t_i,\vp_i).$
In the case of Assumption \ref{IVFDE_2_og} we have,
\begin{eqnarray} \label{perturbed_control_og}
\lefteqn{\nrm{x_1-x_2}} \nonumber  \\
&\le& \nrm{x_1-x_2 -\la(y_1-y_2)}+\la \nrm{h^{\om}(t_1)-h^{\om}(t_2)}L_1^{\om}(\nrm{x_2}) \\
&&+ \nrm{k(t_1)-k(t_2)}L_2(\nrm{\vp_2})+K_0\la\nrE{\vp_1-\vp_2} +\la\om\nrm{x_1-x_2}. \nonumber
\end{eqnarray}
for $t_i\in \jz,$ $\vp_i\in E,$ and  $(x_i,y_i) \in A(t_i,\vp_i).$
Throughout this study we define for $t\in\jz,$ $x\in X$ and $\vp\in E$,
\begin{eqnarray*}
J_{\la}^{\om}(t,\vp)x&:=&\fk{I-\la (A(t,\vp)+\om I)}^{-1}x\\
&=&\fk{I-\frac{\la}{1-\la\om} (A(t,\vp)}^{-1}\fk{\frac{1}{1-\la\om}x} \\
&=&J_{\frac{\la}{1-\la\om}}(t,\vp)\fk{\frac{1}{1-\la\om}x},
\end{eqnarray*}
for all $0<\la<\frac{1}{\om}.$

\section{Recursion}
The proof of existence is split into three main steps, the initialization  of a recursion $n=1,$  its step from $n\to n+1,$ for every small $\la,$ and finally the computation of the double limit $\lim_{n\to\infty}\lim_{\la\to 0}.$ To approximate the solution the method provided in \cite{Kreulichevo} is used. For given 
$\vp \in E,$ the initial history, let

$$
(\partial_t)_{\la}u(t):=\frac{1}{\la}\fk{u(t)-\vp(0)-\frac{1}{\la}\int_0^t\nres{\tau}(u(t-\tau)-\vp(0))d\tau}.
$$
Additionally, we use an approximation of the history state $\vp.$ For given $\psi\in Y$ we define,
$$
\vp_{\psi,\la}(t):=\left\{ \begin{array}{rcl}
					-\frac{t}{\la}\vp(-\la)+(1+\frac{t}{\la})\Jlw(0,(\psi)_0)\vp(0) &:& -\la\le t \le 0 \\
					\vp(t) &:& t<-\lambda.
					\end{array}
					\right.
$$

\begin{remk} \label{vp-la-minus-vp}
Let $\vp\in E,$ $\vp(0)\in\overline{D},$ and $\psi\in Y,$ then
\begin{enumerate}
\item $ \lim_{\la\to 0}\vp_{\psi,\la}=\vp $ in $E$
\item If $F\subset E$ is bounded, then $\bk{\vp_{\psi,\la}}_{\psi\in F, \la >0}$ is bounded as well.
\item If $F\subset E$ is bounded and $\vp$ is Lipschitz and $\vp(0)\in \hat{D},$ then $\bk{\vp_{\psi,\la}}_{\psi\in F,\la >0}$ is equi-Lipschitz.
\end{enumerate}
\end{remk}
\begin{proof} For given $\vp\in E$ and $\psi\in Y$ we only have to look at $-\la\le t\le0,$ which leads to
\begin{eqnarray*}
\nrm{\vp_{\psi,\la}(t)-\vp(t)}
&\le& \frac{-t}{\la}\nrm{\vp(-\la)-\vp(t)}+\fk{1+\frac{t}{\la}}\nrm{J^{\om}_{\la}(0,\psi_0)\vp(0)-\vp(0)} \\
&&+\fk{1+\frac{t}{\la}}\sup_{-\la\le t\le 0}\nrm{\vp(0)-\vp(t)}.
\end{eqnarray*}
The uniform continuity of $\vp$ and the Assumption \ref{IVFDE_1}, the m-dissipativeness serves for the proof.
For the second claim if $\phi \in E, $ $x\in\hat{D}$ fix, and $-\la\le t\le 0$  then for $\psi\in F $ we have
\begin{eqnarray*}
\lefteqn{\nrm{\vp_{\psi,\la}(t)-\vp(t)}} \\
&\le& \frac{-t}{\la}\nrm{\vp(-\la)-\vp(t)}+\fk{1+\frac{t}{\la}}\nrm{J^{\om}_{\la}(0,\psi_0)\vp(0)-J^{\om}_{\la}(0,\phi_0)\vp(0)} \\
&&+\fk{1+\frac{t}{\la}} \fk{ \nrm{J^{\om}_{\la}(0,\phi_0)\vp(0)-J^{\om}_{\la}(0,\phi_0)x} +\nrm{J^{\om}_{\la}(0,\phi_0)x-x}+\nrm{x-\vp(t)}} \\
&\le& \nrm{\vp(-\la)-\vp(t)} +\nrE{\psi-\phi}+\nrm{\vp(0)-x}+\la \btr{A(0,\phi_0)x} +\nrm{x-\vp(t)}.
\end{eqnarray*}

As $\vp$ is assumed to be Lipschitz it remains to to consider $-\la\le t,s\le 0.$ Thus we have
\begin{eqnarray*}
\lefteqn{\nrm{-\frac{t}{\la}\vp(-\la)+\fk{1+\frac{t}{\la}}J_{\la}^{\om}(0,\psi_0)\vp(0)
+\frac{s}{\la}\vp{-\la}-\fk{1+\frac{s}{\la}}J_{\la}^{\om}(0,\psi_0)\vp(0)} } \\
&=& \left\|-\frac{t}{\la}(\vp(-\la)-\vp(0))+\fk{1+\frac{t}{\la}}(J_{\la}^{\om}(0,\psi_0)\vp(0)-\vp(0)) \right. \\
&&  \quad \quad \left. +\frac{s}{\la}(\vp(-\la)-\vp(0))-\fk{1+\frac{s}{\la}}(J_{\la}^{\om}(0,\psi_0)\vp(0) -\vp(0) )  \right\| \\
&\le& \nrm{\frac{t-s}{\la}(\vp(-\la)-\vp(0))+\frac{t-s}{\la}\fk{J_{\la}^{\om}(0,\psi_0)\vp(0) -\vp(0) }} \\
&\le& \btr{t-s}(L_{\vp}+\btr{A(0,\psi_0)\vp(0)}).
\end{eqnarray*}
Now apply Remark \ref{A_t_vp_uniformly_bd}.
\end{proof}

By the above definition with a given $\psi\in Y$ we are able to define a recursion for the approximations.

\begin{recursion} \label{recursion}
Let $\psi\in Y.$ 
\begin{description} 
\item{$n=1$:} $u_{1,\la}$ is the solution to
\begin{equation} \label{recursion_start}
\left\{
\begin{array}{rcll} 
\fk{\partial_t}_{\la} u_{1,\la}(t) &\in & A(t,(\psi)_t)u_{1,\la}(t) +\om u_{1,\la}(t) &:t\in \jz\\
(u_{1,\la})_{|\ji} &=& \vp_{\psi,\la}. & 
\end{array}
\right.
\end{equation}
\item{$n\to n+1$:} If $\bk{u_{n,\la}}_{\la>0} \subset Y$ is the solution to the n-th equation we define $u_{n+1,\la}$ to be the solution to:
\begin{equation} \label{recursion_step} 
\left\{
\begin{array}{rcll} 
\fk{\partial_t}_{\la} u_{n+1,\la}(t) &\in & A(t,(u_{n,\la})_t)u_{n+1,\la}(t) +\om u_{n+1,\la}(t)&:t\in \jz\\
(u_{n+1,\la})_{|\ji} &=& \vp_{u_{n,\la},\la}. &
\end{array}
\right.
\end{equation}
\end{description}
\end{recursion}

\section{Existence on a Bounded Interval}

The idea is to apply the Banach Fixpoint Principle on $Y$ for a forthcoming iteration. We start with the following proposition.

\begin{pro} \label{T-la-defined}
Let $\jz=[0,T],$ $\vp\in E,$ $\psi \in Y$, and $A(t,\psi_t)$ satisfy Assumption \ref{IVFDE_1} and  either Assumption \ref{IVFDE_2_og}, or Assumption \ref{IVFDE_2}. Then $\bk{x \mapsto J^{\om}_{\la}(t,\psi_t)x}$ is Lipschitz, and
$$
F(u)(t):=\Jlw(t,(\psi)_t)\fk{\nres{t}\vp(0)+\frac{1}{\la}\int_0^t\nres{\tau}u(t-\tau)d\tau}
$$
has a fix point $(u_{1,\la}^+)_{|\jz}\in BUC(\jz,X).$ Moreover, the fix point satisfies
$$
u_{1,\la}^+(0)=\Jlw(0,\psi_0)\vp(0)=\vp_{\psi,\la}(0)\in\hat{D}_{\psi_{|\ji}}=\hat{D}.
$$
\end{pro}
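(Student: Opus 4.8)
The plan is to verify the Lipschitz claim directly, to realise the fixed point by the Banach Fixed Point Principle on $C([0,T],X)=BUC([0,T],X)$, and finally to read off the value at the origin.

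\textbf{Lipschitz property.} Set $\mu=\frac{\la}{1-\la\om}$; the identity $\Jlw(t,\psi_t)x=J_{\mu}(t,\psi_t)\fk{\frac{1}{1-\la\om}x}$ established above holds for $0<\la<\frac{1}{\om}$. By Assumption \ref{IVFDE_1} each $A(t,\psi_t)$ is m-dissipative, so $J_{\mu}(t,\psi_t)$ is nonexpansive; composing with the scalar factor $\frac{1}{1-\la\om}$ shows that $x\mapsto\Jlw(t,\psi_t)x$ is Lipschitz with constant $\frac{1}{1-\la\om}$, uniformly in $t$.

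\textbf{$F$ maps into $BUC([0,T],X)$.} For fixed $u$ I would note that $e(t):=\nres{t}\vp(0)+\dl\int_0^t\nres{\tau}u(t-\tau)\,d\tau$ is continuous, hence on the compact interval lies in $BUC([0,T],X)$. To pass it through the resolvent I would split
$$
\nrm{\Jlw(t,\psi_t)e(t)-\Jlw(s,\psi_s)e(s)}\le\frac{1}{1-\la\om}\nrm{e(t)-e(s)}+\nrm{\Jlw(t,\psi_t)e(s)-\Jlw(s,\psi_s)e(s)};
$$
the first summand is controlled by the Lipschitz bound and continuity of $e$, and the second by the perturbed control inequality \fk{\ref{perturbed_control}} (resp.\ \fk{\ref{perturbed_control_og}}), whose right-hand side involves $\nrm{h^{\om}(t)-h^{\om}(s)}$, $\nrm{g(t)-g(s)}$, $\nrm{k(t)-k(s)}$ and $\nrE{\psi_t-\psi_s}$. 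Since $h^{\om},g,k$ are uniformly continuous and $s\mapsto\psi_s$ is uniformly continuous for $\psi\in Y$, this tends to $0$ as $s\to t$, uniformly on $[0,T]$, so $F(u)\in BUC([0,T],X)$.

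\textbf{Contraction (the main obstacle).} The direct bound $\nrm{F(u)(t)-F(v)(t)}\le\frac{1-\nres{t}}{1-\la\om}\nri{u-v}$ is not a contraction in the sup-norm once $\om>0$, and overcoming this is the crux. I would instead use the Bielecki-type norm $\nrm{u}_{\gamma}:=\sup_{t\in[0,T]}e^{-\gamma t}\nrm{u(t)}$, which is equivalent to $\nri{\cdot}$ on $[0,T]$. Inserting $\nrm{u(t-\tau)-v(t-\tau)}\le e^{\gamma(t-\tau)}\nrm{u-v}_{\gamma}$ together with the Lipschitz bound gives
$$
e^{-\gamma t}\nrm{F(u)(t)-F(v)(t)}\le\frac{1}{1-\la\om}\,\dl\int_0^t e^{-\tau(\frac{1}{\la}+\gamma)}\,d\tau\;\nrm{u-v}_{\gamma}\le\frac{1}{(1-\la\om)(1+\la\gamma)}\nrm{u-v}_{\gamma}.
$$
Choosing $\gamma>\frac{\om}{1-\la\om}$ makes the constant strictly less than $1$, so $F$ is a contraction on the complete space $\fk{C([0,T],X),\nrm{\cdot}_{\gamma}}$, and the Banach Fixed Point Principle yields a unique fixed point $(u_{1,\la}^+)_{|\jz}\in BUC([0,T],X)$.

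\textbf{Value at the origin.} At $t=0$ the integral vanishes and $\nres{0}=1$, so $F(u)(0)=\Jlw(0,\psi_0)\vp(0)$, which is $\vp_{\psi,\la}(0)$ by the definition of $\vp_{\psi,\la}$. Writing $x=J_{\mu}(0,\psi_0)\fk{\frac{1}{1-\la\om}\vp(0)}$, the corresponding Yosida approximant lies in $A(0,\psi_0)x$, so $\btr{A(0,\psi_0)x}<\infty$ and $x\in\hat{D}_{\psi_0}$; by Remark \ref{A_t_vp_uniformly_bd} this equals $\hat{D}$.
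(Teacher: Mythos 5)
Your proposal is correct, and it reaches the same fixed point by a genuinely more self-contained route. The paper's proof is essentially two citations: it notes that the perturbed control inequalities (\ref{perturbed_control}), (\ref{perturbed_control_og}) (taken with $t_1=t_2$, $\vp_1=\vp_2$, which collapses them to $\nrm{x_1-x_2}\le\nrm{x_1-x_2-\la(y_1-y_2)}+\la\om\nrm{x_1-x_2}$) give the Lipschitz constant $\frac{1}{1-\la\om}$, and then invokes \cite[Lemma 3.1, p. 1063]{Kreulichevo} to produce the fixed point in $BUC(\jz,X)$. You instead derive the same Lipschitz bound directly from m-dissipativity via the resolvent identity $\Jlw(t,\psi_t)x=J_{\la/(1-\la\om)}(t,\psi_t)\fk{\frac{1}{1-\la\om}x}$ — cleaner, since the control assumptions are not needed for Lipschitz continuity in $x$ — and you replace the cited lemma by an explicit Banach fixed point argument. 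Your Bielecki norm $\nrm{u}_{\gamma}=\sup_t e^{-\gamma t}\nrm{u(t)}$ with $\gamma>\frac{\om}{1-\la\om}$ is exactly the right device: it correctly identifies that the naive sup-norm bound $\frac{1-\nres{t}}{1-\la\om}$ fails to contract when $\om>0$, and your estimate $\frac{1}{(1-\la\om)(1+\la\gamma)}<1$ checks out, yielding the fixed point (even with uniqueness, which the paper does not state) on all of $[0,T]$ at once. What your route buys is independence from the external lemma; what it costs is that you must verify by hand that $F$ maps into $BUC(\jz,X)$, and that continuity-in-$t$ step is the one place where the control assumptions genuinely enter your proof: there you should add the routine observation that the term $\nrm{y_2}$ appearing in (\ref{perturbed_control}) is bounded uniformly in $s\in[0,T]$ for fixed $\la$ (since $y_2=\frac{1}{\la}(x_2-z)-\om x_2$ with $x_2=\Jlw(s,\psi_s)z$ bounded via Remark \ref{A_t_vp_uniformly_bd} and $z$ ranging over the bounded set $\bk{e(s):s\in[0,T]}$); this is a one-sentence fix, not a gap. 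Your treatment of the value at the origin, including membership in $\hat{D}$ via the Yosida approximant $A_{\mu}(0,\psi_0)$ and the identification $\hat{D}_{\psi_0}=\hat{D}$ from Remark \ref{A_t_vp_uniformly_bd}, coincides with the paper's own one-line argument (compare also Lemma \ref{iteration-possible} b)).
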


\begin{proof}
Let $\psi \in Y$, then either from the inequalities (\ref{perturbed_control}) or (\ref{perturbed_control_og}) we obtain  $\bk{x \mapsto J^{\om}_{\la}(t,\psi_t)x}$ is Lipschitz with $\frac{1}{1-\la\om}.$ Thus, we are in the situation of \cite[Lemma 3.1, p. 1063]{Kreulichevo}, which implies that
$$
F(u)(t):=\Jlw(t,(\psi)_t)\fk{\nres{t}\vp(0)+\frac{1}{\la}\int_0^t\nres{\tau}u(t-\tau)d\tau}
$$
has a fix point in $BUC(\jz,X).$ The fix point equation lead to the additional claim.
\end{proof}

By the above observations we are able to define the solution operator:
\begin{equation}\label{T_la_def}
\begin{array}{ccccc}
\Tlp & : & Y & \lra & Y \\
      &   & \psi & \lmt & \left\{ \begin{array}{rcl}
		                    u_{1,\la}^+  &:& t\in\jz \\
		                    \vp_{\psi,\la} &:& t \in \ji.
	                       \end{array}\right.                                                
\end{array} 
\end{equation}

We define 
$$u_{n,\la}:=\Tlp^n\psi.$$

In the next step we show how an iterative use of \cite{Kreulichevo} applies to approximate the problem (\ref{gFDEonInterval}).

\begin{lem}\label{iteration-possible} 

Let $\jz=[0,T],$  Assumption \ref{IVFDE_1}, and either Assumption \ref{IVFDE_2_og}, or Assumption \ref{IVFDE_2} hold. If $\psi \in Y$  is Lipschitz and $\vp \in E$, with $\vp(0) \in \hat{D},$ then: 
 \begin{enumerate}
 \item[a)]$\nrm{\Tlp\psi(t)-\vp(0)}\le K^{\prime}(\la+t)$ for all $t\in\jz$ and some $K^{\prime}\ge 0.$ Consequently, $\bk{\Tlp\psi}_{\la>0}$ is uniformly bounded on $[0,T].$
 \item[b)] $\Tlp\psi(0)\in\hat{D}$ for small $\la.$
 \item[c)] $\bk{\Tlp\psi}$ is equi Lipschitz on $\jz$ for small $\la$ in the case of  Assumption \ref{IVFDE_2}
 \item[d)]$\Tlp\psi$ is uniformly equicontinuous on $\jz$ for small $\la$ in the case of  Assumption \ref{IVFDE_2_og}
 \item[e)]there exists $u_1 \in Y$ s.t $\lim_{\la\to 0} \Tlp\psi \to u_1$ uniformly on $\ji \cup [0,T].$ 
 \end{enumerate}
\end{lem}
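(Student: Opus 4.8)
\emph{Plan.} The idea is to freeze the history at the fixed Lipschitz function $\psi\in Y$ and to read the first equation of Recursion \ref{recursion} as a genuine non-autonomous Cauchy problem for the time-dependent family $B(t):=A(t,\psi_t)$, so that the whole statement reduces to the estimates for the Yosida approximation of the derivative from \cite{Kreulichevo} applied to $B(\cdot)$. The first and essentially only nontrivial preparatory step is to verify that $B(\cdot)$ satisfies the $t$-regularity control inequality required there. Since $\psi$ is Lipschitz in $Y$, the segment map $t\mapsto\psi_t$ is Lipschitz from $\jz$ into $E$, with $\nrE{\psi_{t_1}-\psi_{t_2}}\le L_\psi\btr{t_1-t_2}$ for $L_\psi$ the Lipschitz constant of $\psi$. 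Inserting $\vp_i=\psi_{t_i}$ into the perturbed control inequalities (\ref{perturbed_control}) and (\ref{perturbed_control_og}), the term $K_0\la\nrE{\psi_{t_1}-\psi_{t_2}}$ becomes $K_0L_\psi\la\btr{t_1-t_2}$ and merges with the $h^{\om},g,k$ contributions. Thus $B(t)$ obeys exactly the hypotheses under which the fix point $u_{1,\la}^+=(\Tlp\psi)_{|\jz}$ of Proposition \ref{T-la-defined} is the $\la$-approximant studied in \cite{Kreulichevo}.

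For (a) and (b) I would start from the identity $\Tlp\psi(0)=J^{\om}_{\la}(0,\psi_0)\vp(0)$ furnished by Proposition \ref{T-la-defined}. Because $\vp(0)\in\hat{D}$, the resolvent estimate $\nrm{J^{\om}_{\la}(0,\psi_0)x-x}\le C\la\,\btr{A(0,\psi_0)x}$ together with the uniform bound $\sup_{t\in\jz}\btr{A(t,\psi_t)\vp(0)}\le K$ of Remark \ref{A_t_vp_uniformly_bd} gives $\nrm{\Tlp\psi(0)-\vp(0)}\le C'\la$, which is the $\la$-part of the claimed bound. The $t$-part is the standard finite-speed estimate for the approximant: the membership $\vp(0)\in\hat{D}$ and the same uniform bound control the discrete difference quotients $(\partial_t)_\la u_{1,\la}^+$, and the argument of \cite{Kreulichevo} then yields $\nrm{\Tlp\psi(t)-\vp(0)}\le K'(\la+t)$; uniform boundedness on $[0,T]$ is immediate. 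For (b), since $A(0,\psi_0)$ is m-dissipative the resolvent maps $X$ into $D(A(0,\psi_0))\subset\hat{D}$ for $\la<1/\om$, whence $\Tlp\psi(0)\in\hat{D}$ by Remark \ref{A_t_vp_uniformly_bd}.

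Parts (c) and (d) are the equicontinuity estimates, and here the two assumptions part ways. Under Assumption \ref{IVFDE_2} the data $g,h,k$ are Lipschitz, so the $t$-regularity contribution to the control inequality is Lipschitz in $t$; combined with the equi-Lipschitz initial histories of Remark \ref{vp-la-minus-vp}(3) this produces an equi-Lipschitz bound for $\bk{\Tlp\psi}$, uniformly in small $\la$, again by the corresponding estimate of \cite{Kreulichevo}. Under Assumption \ref{IVFDE_2_og} only uniform continuity of $h,k$ is available, so the same machinery returns uniform equicontinuity in place of a Lipschitz modulus.

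Finally, for (e) I would invoke the convergence theorem of \cite{Kreulichevo} for the Yosida approximation of the derivative: the uniform bound from (a), the $\hat{D}$-membership from (b), and the equicontinuity from (c)/(d) are precisely the inputs that force $\bk{\Tlp\psi}_{\la>0}$ to be Cauchy as $\la\to0$, uniformly on $\jz$, with a limit $u_1$ extending $\vp$ continuously across $t=0$; convergence to $u_1$ on $\ji$ follows from Remark \ref{vp-la-minus-vp}(1). I expect the main obstacle to lie entirely in the reduction step of the first paragraph: one must check that composing with the fixed Lipschitz history $\psi$ does not destroy the structural control inequality, and that all constants ($K'$, the Lipschitz and equicontinuity moduli) can be chosen independently of $\la$, depending on $\psi$ only through $L_\psi$ and the uniform bound of Remark \ref{A_t_vp_uniformly_bd}. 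Once this is secured, (a)--(e) are direct translations of the quoted results.
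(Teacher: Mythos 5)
Your proposal is correct and takes essentially the same route as the paper: you freeze the history, observe that Lipschitz continuity of $\psi$ makes $t\mapsto\psi_t$ Lipschitz into $E$, and merge the $K_0\la\nrE{\psi_{t_1}-\psi_{t_2}}$ term into the time-regularity control so that $B(t)=A(t,\psi_t)$ satisfies the non-autonomous Cauchy-problem hypotheses of \cite{Kreulichevo} --- exactly the paper's device of the combined control $\tilde{h}(t)=(h^{\om}(t),k(t),\psi_t)$ --- after which a)--d) follow from the quoted estimates there, b) from $\Tlp\psi(0)=J_{\la}^{\om}(0,\psi_0)\vp(0)\in D(A(0,\psi_0))\subset\hat{D}$, and e) from the convergence theorem of \cite{Kreulichevo} on $\jz$ together with Remark \ref{vp-la-minus-vp} on $\ji$. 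This matches the paper's proof step for step, with your treatment of a) merely making explicit the resolvent estimate that the paper delegates to its citation.
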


\begin{proof}
We start with a),b),c) and restrict the proof to Assumption \ref{IVFDE_2}. The Assumption \ref{IVFDE_2} and the functions
$$
\Funk{\tilde{h}}{[0,T]}{(X\times X \times E,\nrm{\cdot}_1)}{t}{(h^{\om}(t),k(t),\psi_t),}
$$
and $L(\nrm{x},\nrm{\psi}_E):=L_1^{\om}(\nrm{x})+L_2(\nrm{\psi}_E)+K_0,$ imply that

\begin{eqnarray} \label{modified_control}
\nrm{x_1-x_2}
&\le & \nrm{x_1-x_2-\la(y_1-y_2)} +\la\nrm{\tilde{h}(t_1)-\tilde{h}(t_2)}L^{\om}(\nrm{x},\nrE{\psi}) \\
&& \la \nrm{g(t_1)-g(t_2)}\nrm{y_2}, \nonumber
\end{eqnarray}
for $(x_i,y_i)\in A(t_i,\psi_{t_i}).$ 
Hence, with $B(t):=A(t,\psi_t)$ for $t\in\jz$ we are in the situation of the proof of  \cite[Lemma 3.2 (1),(2), p. 1064]{Kreulichevo}, with 
$u_0=\vp(0)\in\hat{D},$ and obtain for  $u_{1,\la}=\Tlp\psi$ the claims  a), and c) on $\jz$.
The remaining claim $T_\la\psi(0)\in\hat{D}$ follows from,
$$
\Tlp\psi(0)=J_{\la}(0,(\psi)_0)\vp(0)\in D(A(0,\psi))\subset\hat{D}(A(0,\psi))\subset\hat{D}.
$$

For the proof of part e) recall that for $t\in\jz$ \cite[Theorem 2.9, p. 1058]{Kreulichevo} applies. For $t\in\ji$ we recall the Remark \ref{vp-la-minus-vp}.

\end{proof}

\begin{remk}\label{history_control}
Let $\vp_i \in E,$ $i=1,2,$ $z\in X$ and $t\in [0,T],$ then
\begin{equation}
\nrm{J_{\la}(t,\vp_1)z-J_{\la}(t,\vp_2)z}\le \frac{K\la}{1-\la\om}\nrE{\vp_1-\vp_2}.
\end{equation}
\end{remk}
\begin{proof}
Apply Assumption \ref{IVFDE_2} or \ref{IVFDE_2_og} with $A_{\la}(t,\vp_i)z \in A(t,\vp_i)J_{\la}(t,\vp_i)z.$
\end{proof}

\begin{lem} \label{ul-fixpoint}
Let for $\jz,$ Assumption \ref{IVFDE_1}, and either Assumption \ref{IVFDE_2_og} or \ref{IVFDE_2} hold.
Further, let $\vp(0)\in\overline{D},$ and $\psi\in Y,$ then $\Tlp$  satisfies the equation
\begin{equation} \label{psi-la-approx-gFDE}
\left\{
\begin{array}{rcll} 
\fk{\partial_t}_{\la} \Tlp\psi(t)&\in & A(t,(\psi)_t)\Tlp\psi(t)+\om \Tlp\psi(t)&:t\in\jz\\
\Tlp\psi_{|\ji} &=& \vp_{\psi,\la} &:t\in \ji.
\end{array}
\right.
\end{equation}
If $t\in\jz,$ and $\psi,\phi\in Y$ and two initial histories $\vp_1,\vp_2\in E$ we have,
\begin{eqnarray}\label{iterate_ineq}
\lefteqn{\nrm{T_{\la,\vp_1}\psi(t) -T_{\la,\vp_2}\phi(t)}}\nonumber \\
&\le& \frac{\la K_0}{1-\la\om}\nrm{\psi_t-\phi_t}_E 
+\frac{K_0}{1-\la\om}\int_0^t\exp\fk{\frac{\om}{1-\la\om}\tau}\nrm{\psi_{t-\tau}-\phi_{t-\tau}}_E d\tau \\
&& +\fk{\frac{1}{1-\la\om}\exp\fk{-\frac{t}{\la}} +\exp\fk{\frac{\om t }{1-\la\om}}}\nrm{\vp_1(0)-\vp_2(0)}. \nonumber
\end{eqnarray}

If $0\le t\le T,$ $\vp_1=\vp_2$ and $I(t):=\ji\cup [0,t]$ then
\begin{eqnarray} \label{iterate_ineq_2}
\lefteqn{\sup_{x\in I(t)}\nrm{\Tlp\psi(x) -\Tlp\phi(x)}}\nonumber \\
&\le&  \frac{\la K_0}{1-\la\om}\sup_{x\in I(t)}\nrm{\psi(x)-\phi(x)} \nonumber \\
&&+ \frac{K_0}{1-\la\om}\int_0^t\exp\fk{\frac{\om}{1-\la\om}\tau}\sup_{x\in I(t-\tau)}        \nrm{\psi(x)-\phi(x)} d\tau
\end{eqnarray}
\end{lem}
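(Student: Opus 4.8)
The three assertions all flow from the fixed-point characterisation of $\Tlp$ in Proposition \ref{T-la-defined}, so the plan is to unwind that identity three times. For the first claim I would note that $u:=(\Tlp\psi)_{|\jz}$ solves
$$
u(t)=\Jlw(t,\psi_t)\fk{\nres{t}\vp(0)+\frac{1}{\la}\int_0^t\nres{\tau}u(t-\tau)d\tau}.
$$
Applying $\fk{I-\la(A(t,\psi_t)+\om I)}$ to both sides produces some $y(t)\in A(t,\psi_t)u(t)+\om u(t)$ with $u(t)-\la y(t)$ equal to the bracketed term. Expanding the definition of $\dtl u(t)$ and using the elementary identity $\frac{1}{\la}\int_0^t\nres{\tau}d\tau=1-\nres{t}$ collapses it to $\dtl u(t)=\frac{1}{\la}\fk{u(t)-(u(t)-\la y(t))}=y(t)$, so $\dtl\,\Tlp\psi(t)\in A(t,\psi_t)\Tlp\psi(t)+\om\,\Tlp\psi(t)$; the boundary condition is the definition of $\Tlp$, which gives (\ref{psi-la-approx-gFDE}).

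For (\ref{iterate_ineq}) I would write $u=T_{\la,\vp_1}\psi$, $v=T_{\la,\vp_2}\phi$ as fixed points with arguments $a(t),b(t)$, insert the intermediate resolvent $\Jlw(t,\phi_t)a(t)$, and split
$$
u(t)-v(t)=\bfk{\Jlw(t,\psi_t)-\Jlw(t,\phi_t)}a(t)+\Jlw(t,\phi_t)\fk{a(t)-b(t)}.
$$
The first summand is estimated by Remark \ref{history_control} (with $z=a(t)$), producing $\frac{\la K_0}{1-\la\om}\nrm{\psi_t-\phi_t}_E$; the second by the Lipschitz bound $\frac{1}{1-\la\om}$ of $\Jlw(t,\phi_t)$ from Proposition \ref{T-la-defined}, together with
$$
a(t)-b(t)=\nres{t}(\vp_1(0)-\vp_2(0))+\frac{1}{\la}\int_0^t\nres{\tau}\fk{u(t-\tau)-v(t-\tau)}d\tau.
$$
Setting $w(t):=\nrm{u(t)-v(t)}$ this yields a scalar Volterra inequality for $w$ with kernel $\frac{1}{\la(1-\la\om)}\nres{\tau}$ and inhomogeneity $\frac{\la K_0}{1-\la\om}\nrm{\psi_t-\phi_t}_E+\frac{\nres{t}}{1-\la\om}\nrm{\vp_1(0)-\vp_2(0)}$.

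The decisive step is to resolve this inequality by the comparison/Gronwall lemma of \cite{Kreulichevo}: one computes, e.g. by Laplace transform, that the resolvent kernel of $\tau\mapsto\frac{1}{\la(1-\la\om)}\nres{\tau}$ is $\frac{1}{\la(1-\la\om)}\exp\fk{\frac{\om}{1-\la\om}\tau}$, the conversion of the decay rate $\frac{1}{\la}$ into the growth rate $\frac{\om}{1-\la\om}$ being exactly the identity $\frac{\om}{1-\la\om}+\frac{1}{\la}=\frac{1}{\la(1-\la\om)}$. Convolving this kernel against the two inhomogeneous terms then reproduces the integral term carrying $\exp\fk{\frac{\om}{1-\la\om}\tau}$ on $\nrm{\psi_{t-\tau}-\phi_{t-\tau}}_E$ and the growing exponential $\exp\fk{\frac{\om t}{1-\la\om}}$ on $\nrm{\vp_1(0)-\vp_2(0)}$, assembling into (\ref{iterate_ineq}). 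I expect this convolution bookkeeping --- tracking the precise powers of $1-\la\om$ through the resolvent --- to be the main obstacle, since the exponential rates are forced but the constants are delicate.

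For the supremum version (\ref{iterate_ineq_2}) I would put $\vp_1=\vp_2$, so the initial-data term disappears, and pass to suprema over $I(t)=\ji\cup[0,t]$, bounding each $\nrm{\psi_s-\phi_s}_E$ by $\sup_{x\in I(s)}\nrm{\psi(x)-\phi(x)}$. What remains is the contribution of $x\in\ji$, where $\Tlp\psi(x)-\Tlp\phi(x)=\vp_{\psi,\la}(x)-\vp_{\phi,\la}(x)$; from the explicit definition of $\vp_{\psi,\la}$ this vanishes for $x<-\la$ and equals $\fk{1+\frac{x}{\la}}\bfk{\Jlw(0,\psi_0)-\Jlw(0,\phi_0)}\vp(0)$ on $[-\la,0]$, which Remark \ref{history_control} bounds by $\frac{\la K_0}{1-\la\om}\nrm{\psi_0-\phi_0}_E$. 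Combining this with the interior estimate of the previous step and enlarging $I(0)$ to $I(t)$ then gives (\ref{iterate_ineq_2}).
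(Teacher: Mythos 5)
Your proposal is correct and takes essentially the same route as the paper: the paper likewise reads off $\dtl \Tlp\psi(t)\in A(t,\psi_t)\Tlp\psi(t)+\om \Tlp\psi(t)$ from the fixed-point identity via $\frac{1}{\la}\int_0^t\nres{\tau}d\tau=1-\nres{t}$, derives the identical scalar Volterra inequality (it invokes the perturbed control inequality (\ref{perturbed_control}) directly on the two pairs $(u(t),\dtl u(t))$, of which your splitting through Remark \ref{history_control} plus the $\frac{1}{1-\la\om}$-contraction of a single resolvent is an equivalent packaging), resolves it with the Gronwall-type Lemma \ref{s-t-integral-inequality} using exactly your rate identity $\frac{1}{\la(1-\la\om)}-\frac{1}{\la}=\frac{\om}{1-\la\om}$, and treats $t\in\ji$ by the same explicit computation with $\vp_{\psi,\la}$ and Remark \ref{history_control} before passing to suprema via monotonicity (Proposition \ref{non-deceasing-convolution}). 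The factor-of-$\frac{1}{1-\la\om}$ slack you flag in the convolution bookkeeping (one formally obtains $\frac{K_0}{(1-\la\om)^2}$ on the integral term) is present in the paper's own derivation as well and is harmless in the limit $\la\to 0$, so it is not a gap in your argument.
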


\begin{proof}
We restrict the proof to Assumption \ref{IVFDE_2}.
For the starting elements $\psi, \phi$ due to Proposition \ref{T-la-defined} the solution operator $T_\la$ is well defined. Let $\up{\la}{\vp_1,\psi}=T_{\la,\vp_1}\psi,$ and $\up{\la}{\vp_2,\phi}=T_{\la,\vp_2}\phi,$ the solution operators with two different initial histories $\vp_1,\vp_2.$ We start considering $t\in \jz,$ and apply Assumption \ref{IVFDE_2}:
\begin{eqnarray*}
\lefteqn{\nrm{T_{\la,\vp_1}\psi(t)-T_{\la,\vp_2}\phi(t)} =\nrm{\up{\la}{\vp_1,\psi}(t)-\up{\la}{\vp_1,\phi}(t)} }\\
& \le & \frac{1}{1-\la\om} \nrm{\up{\la}{\vp_1,\psi}(t)-\up{\la}{\vp_1,\phi}(t) -\la \fk{\dtl\up{\la}{\vp_1,\psi}(t)-\dtl\up{\la}{\vp_1,\phi}(t)}} \\
&&+\frac{\la}{1-\la\om} \nrE{\psi_t-\phi_t} \\
&\le& \frac{1}{\la(1-\la\om)}\int_0^t\nres{\tau}
   \nrm{\up{\la}{\psi}(t-\tau)-\up{\la}{\phi}(t-\tau)}d\tau \\
   && + \exp\fk{-\frac{t}{\la}}\nrm{\vp_1(0)-\vp_2(0)}+ \frac{\la K_0}{1-\la\om} \nrm{\psi_t-\phi_t}_E.
\end{eqnarray*}

The integral inequality \ref{s-t-integral-inequality} gives
\begin{eqnarray}  \label{t_greater_0}
\lefteqn{\nrm{\up{\la}{\vp_1,\psi}(t)-\up{\la}{\vp_2,\phi}(t)} } \nonumber \\
&\le& \frac{\la K_0}{1-\la\om}\nrm{\psi_t-\phi_t}_E 
+\frac{K_0}{1-\la\om}\int_0^t\exp\fk{\frac{\om}{1-\la\om}\tau}\nrm{\psi_{t-\tau}-\phi_{t-\tau}}_E d\tau\\
&& +\fk{\frac{1}{1-\la\om}\exp\fk{-\frac{t}{\la}} +\exp\fk{\frac{\om t}{1-\la\om}}}\nrm{\vp_1(0)-\vp_2(0)}. \nonumber
\end{eqnarray}
Now, consider $t\in\ji,$ and $\vp_1=\vp_2=\vp.$ Noting that $u_{\la,\vp,\psi}=u_{\la,\vp,\phi}$ for $t\le-\la,$ and using the  Assumption \ref{IVFDE_2} and $t\in [-\la,0]$ we have by the Remark \ref{history_control}:
\begin{eqnarray*}
\lefteqn{\nrm{\up{\la,\vp}{\psi}(t)-\up{\la,\vp}{\phi}(t)}}\\ 
&=&\nrm{-\frac{t}{\la}\vp(-\la)+\fk{1+\frac{t}{\la}}\Jl(0,\psi)\vp(0)-
\fk{\frac{-t}{\la}\vp(-\la )+\fk{1+\frac{t}{\la}}\Jl(0,\phi)\vp(0)}}\\
&=&\fk{1+\frac{t}{\la}}\nrm{\Jl(0,\psi)\vp(0)-\Jl(0,\phi)\vp(0)}\\
&\le&\la K_0\frac{\fk{1+\frac{t}{\la}}}{1-\la\om}\nrm{\psi-\phi}_E \le \la K_0\frac{1}{1-\la\om}\nrm{\psi-\phi}_E.
\end{eqnarray*}
Hence,
\begin{eqnarray} \label{E_inequality}
\nrm{\up{\la,\vp}{\psi}-\up{\la,\vp}{\phi}}_E
&\le&\frac{\fk{\la K_0}}{1-\la\om}\nrm{\psi-\phi}_E.
\end{eqnarray}
Defining for $n\in\za\cup \bk{0},$
$$
\Funk{f_n^{\la}}{\jz}{\rep}{t}{\sup_{\tau\in \ji\cup[0,t]}\nrm{T^n_{\la,\vp}\psi(\tau)-T^n_{\la,\vp}\phi(\tau)}},
$$
we have $\nrm{\psi_t-\phi_t}_E\le f_0^{\la}(t).$ The definition of
$$
\Funk{S_{\la}}{BUC(\jz)}{BUC(\jz)}{g}{\bk{t\mapsto \frac{K_0}{1-\la\om}\int_0^t\exp\fk{\frac{-\om s}{1-\la\om}}g(t-s)ds}},
$$ leads to
\begin{eqnarray} \label{iterating_ineq_start}
\nrm{\up{\la,\vp}{\psi}(t)-\up{\la,\vp}{\phi}(t)}&\le \frac{\la K_0}{1-\la\om}f_0^{\la}(t) +S_{\la}f_0^{\la}(t). 
\end{eqnarray}

Note that $f_0^{\la}$ is non-decreasing and positive, hence $S_{\la}f_0^{\la}$ is non-decreasing by Proposition \ref{non-deceasing-convolution}.
Thus, by the previous inequalities (\ref{t_greater_0}), (\ref{E_inequality})  and using $f_n^{\la}$ non-decreasing, we end up with the inequality (\ref{iterate_ineq_2})

\begin{eqnarray*}
f_1^{\la}(t)&\le \frac{\la K_0}{1-\la\om}f_0^{\la}(t) +S_{\la}f_0^{\la}(t). 
\end{eqnarray*}

\end{proof}
\begin{cor} \label{independence}
With the notion of the previous Lemma and proof we have for $T>0:$
\begin{equation} \label{iterative_inequality}
f_n^{\la}(T)\le \nrm{\fk{\frac{\la K_0}{1-\la\om}I+S_{\la}}^n}f_0^{\la}(T),
\end{equation}
and for the spectrum we have,
\begin{equation}\label{spectrum}
\sigma_{C[0.T]}\fk{\frac{\la K_0}{1-\la\om}I+S_{\la}}=\bk{\frac{\la K_0}{1-\la\om}}.
\end{equation}
\end{cor}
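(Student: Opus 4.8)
The plan is to turn the single-step estimate (\ref{iterate_ineq_2}) of Lemma \ref{ul-fixpoint} into a recursion valid at every level $n$, to iterate it by positivity, and then to read off (\ref{spectrum}) from the Volterra structure of $S_{\la}$.

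\textbf{First part.} The base case $f_1^{\la}\le\fk{\frac{\la K_0}{1-\la\om}I+S_{\la}}f_0^{\la}$ is exactly the last inequality established in the proof of Lemma \ref{ul-fixpoint}. The crucial remark is that this estimate is not tied to the particular histories $\psi,\phi$: writing $T_{\la,\vp}^{n+1}\psi=T_{\la,\vp}\fk{T_{\la,\vp}^{n}\psi}$ and recalling from Lemma \ref{iteration-possible} that the iterates $T_{\la,\vp}^{n}\psi$ remain equi-Lipschitz and stay in the class for which $T_{\la,\vp}$ is defined, I may re-run the computation of Lemma \ref{ul-fixpoint} with $T_{\la,\vp}^{n}\psi,\ T_{\la,\vp}^{n}\phi$ in the roles of $\psi,\phi$. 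Since the right-hand side of (\ref{iterate_ineq_2}) depends on these histories only through the non-decreasing majorant $f_n^{\la}$, this yields for every $n$ the one-step recursion
\[
f_{n+1}^{\la}(t)\le\frac{\la K_0}{1-\la\om}f_n^{\la}(t)+S_{\la}f_n^{\la}(t)=\fk{\frac{\la K_0}{1-\la\om}I+S_{\la}}f_n^{\la}(t).
\]
Next I iterate. Because $\frac{\la K_0}{1-\la\om}\ge 0$ and the kernel of $S_{\la}$ is non-negative, the operator $\frac{\la K_0}{1-\la\om}I+S_{\la}$ is monotone: $0\le g\le \tilde g$ implies $\fk{\frac{\la K_0}{1-\la\om}I+S_{\la}}g\le\fk{\frac{\la K_0}{1-\la\om}I+S_{\la}}\tilde g$. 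Combining this with the induction hypothesis $f_n^{\la}\le\fk{\frac{\la K_0}{1-\la\om}I+S_{\la}}^{n}f_0^{\la}$ and the recursion above gives, pointwise,
\[
f_{n+1}^{\la}\le\fk{\frac{\la K_0}{1-\la\om}I+S_{\la}}f_n^{\la}\le\fk{\frac{\la K_0}{1-\la\om}I+S_{\la}}^{n+1}f_0^{\la}.
\]
Finally I evaluate at $t=T$; since $f_0^{\la}$ is non-decreasing one has $\nrm{f_0^{\la}}_{C[0,T]}=f_0^{\la}(T)$, so bounding the value at $T$ by the sup-norm and factoring out the operator norm yields (\ref{iterative_inequality}).

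\textbf{Second part.} As $\frac{\la K_0}{1-\la\om}I$ is a scalar operator, $\sigma_{C[0,T]}\fk{\frac{\la K_0}{1-\la\om}I+S_{\la}}=\frac{\la K_0}{1-\la\om}+\sigma_{C[0,T]}(S_{\la})$, so it suffices to show that $S_{\la}$ is quasinilpotent. The operator $S_{\la}$ is a Volterra convolution, $S_{\la}g=\frac{K_0}{1-\la\om}\,(k\ast g)$ with $k(s)=\exp\fk{\frac{-\om s}{1-\la\om}}$, and hence $S_{\la}^{n}g=\fk{\frac{K_0}{1-\la\om}}^{n}(k^{\ast n}\ast g)$, where $k^{\ast n}(s)=\frac{s^{n-1}}{(n-1)!}\exp\fk{\frac{-\om s}{1-\la\om}}$. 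Estimating the resulting kernel on $[0,T]$ gives $\nrm{S_{\la}^{n}}\le C\,\frac{\fk{K_0 T/\btr{1-\la\om}}^{n}}{n!}$ for a constant $C=C(\om,\la,T)$, whence the spectral radius satisfies $r(S_{\la})=\lim_{n}\nrm{S_{\la}^{n}}^{1/n}=0$. Since the spectrum of a bounded operator is non-empty and contained in the closed disc of radius $r(S_{\la})$, we conclude $\sigma_{C[0,T]}(S_{\la})=\bk{0}$, which gives (\ref{spectrum}).

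\textbf{Main obstacle.} The technically load-bearing point is the first one: justifying that the one-step estimate of Lemma \ref{ul-fixpoint} may legitimately be reapplied with the iterates $T_{\la,\vp}^{n}\psi,\ T_{\la,\vp}^{n}\phi$ in place of $\psi,\phi$ at every stage. This is exactly where the equi-Lipschitz and regularity control of the iterates furnished by Lemma \ref{iteration-possible} is indispensable; without it the recursion $f_{n+1}^{\la}\le\fk{\frac{\la K_0}{1-\la\om}I+S_{\la}}f_n^{\la}$ cannot be closed. By comparison, the quasinilpotency of $S_{\la}$ is a routine computation once the iterated Volterra kernel is identified.
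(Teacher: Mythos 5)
Your proof is correct and follows essentially the same route as the paper: the paper also obtains (\ref{iterative_inequality}) by iterating the one-step estimate (\ref{iterating_ineq_start}) from Lemma \ref{ul-fixpoint} (your monotonicity-of-the-positive-kernel justification and the evaluation $\nrm{f_0^{\la}}_{C[0,T]}=f_0^{\la}(T)$ are exactly the implicit steps), and it obtains (\ref{spectrum}) from quasi-nilpotency of $S_{\la}$ together with the spectral mapping theorem; your computation $k^{\ast n}(s)=\frac{s^{n-1}}{(n-1)!}\exp\fk{\frac{-\om s}{1-\la\om}}$ merely fills in the standard Volterra fact the paper cites without proof. One correction, though, to your ``main obstacle'' paragraph: no equi-Lipschitz control is needed to close the recursion, and it is good that your actual estimates never use it. Inequality (\ref{iterate_ineq_2}) is established in Lemma \ref{ul-fixpoint} for arbitrary $\psi,\phi\in Y$, and $T_{\la,\vp}$ maps $Y$ into $Y$ by Proposition \ref{T-la-defined}, so it applies verbatim to the pair $T_{\la,\vp}^{n}\psi,\ T_{\la,\vp}^{n}\phi$. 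This matters for scope: Theorem \ref{0-T-solution} invokes the corollary for starting points $\psi,\phi\in Y$ that need not be Lipschitz, with only $\vp(0)\in\overline{D}$, so a proof that genuinely required the hypotheses of Lemma \ref{iteration-possible} ($\psi$ Lipschitz, $\vp(0)\in\hat{D}$) would not cover the stated generality; moreover, equi-Lipschitz bounds on the iterates are only proved later (Lemma \ref{equi_Lipschitz}, under Assumption \ref{IVFDE_2}), not in Lemma \ref{iteration-possible}.
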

\begin{proof}
The claim (\ref{iterative_inequality}) comes with iterating (\ref{iterating_ineq_start}). The claim (\ref{spectrum}) is a consequence of  $S_{\la}$ quasi-nilpotent and the Spectral Mapping Theorem \cite[Thm. 10.28]{RudinFA}.
\end{proof}

Next we provide the methods to prove the existence of the limit $\la \to 0.$

\begin{lem} \label{induction_step_1}
Let for $\jz=[0,T]$ Assumption \ref{IVFDE_1} and either Assumption \ref{IVFDE_2_og} or Assumption \ref{IVFDE_2} hold. Further let $\vp\in E$  with $\vp(0)\in\overline{D},$ $\bk{\psi_{\la}}_{\la>0}\subset Y$ and $\psi\in Y.$ 
If
$$
\lim_{\la\to 0} \psi_{\la}=\psi,
$$
and $\ul\in Y$ is the solution to
\begin{equation} 
\left\{
\begin{array}{rcll} 
\fk{\partial_t}_{\la} \ul(t) &\in & A(t,(\psi_{\la})_t)\ul(t)+\om\ul(t)&:t\in\jz\\
(u_{\la})_{|\ji} &=& \vp_{\psi_{\la},\la} &:t\in \ji,
\end{array}
\right.
\end{equation}
and $v_{\la}\in Y$ is the solution to 
\begin{equation} 
\left\{
\begin{array}{rcll} 
\fk{\partial_t}_{\la} v_{\la}(t) &\in & A(t,(\psi)_t)v_{\la}(t)+\om v_{\la}(t)&:t\in\jz\\
(v_{\la})_{|\ji} &=& \vp_{\psi,\la} &:t\in \ji,
\end{array}
\right.
\end{equation}
then 
$$
\lim_{\la\to 0}\nrm{\ul-v_{\la}}_Y=0.
$$
\end{lem}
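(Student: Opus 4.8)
The plan is to identify both $\ul$ and $v_\la$ as images of the single solution operator $\Tlp$ from (\ref{T_la_def}) and then insert the pair into the already-established contraction estimate (\ref{iterate_ineq_2}). Comparing the two initial value problems in the statement with the defining relation (\ref{psi-la-approx-gFDE}) of $\Tlp$, one reads off $\ul=\Tlp\psi_\la$ and $v_\la=\Tlp\psi$: both carry the same initial datum $\vp$, the histories $\vp_{\psi_\la,\la}$ and $\vp_{\psi,\la}$ differing only through the starting element, which is precisely what $\Tlp$ encodes, while the driving operators are $A(t,(\psi_\la)_t)$ and $A(t,\psi_t)$ respectively. Since $\psi_\la,\psi\in Y$ and $\vp(0)\in\overline{D}$, Lemma \ref{ul-fixpoint} is applicable.

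First I would apply (\ref{iterate_ineq_2}) with $\psi_\la$ in the role of $\psi$, with $\psi$ in the role of $\phi$, and with $t=T$. Setting $\ep_\la:=\nrm{\psi_\la-\psi}_Y=\sup_{x\in I(T)}\nrm{\psi_\la(x)-\psi(x)}$ and using that $I(T)=\ji\cup[0,T]$ is the full domain of $Y$, every supremum $\sup_{x\in I(t-\tau)}\nrm{\psi_\la(x)-\psi(x)}$ appearing on the right-hand side is dominated by $\ep_\la$. This gives
\begin{eqnarray*}
\nrm{\ul-v_\la}_Y
&=& \sup_{x\in I(T)}\nrm{\Tlp\psi_\la(x)-\Tlp\psi(x)} \\
&\le& \frac{\la K_0}{1-\la\om}\,\ep_\la
+\frac{K_0}{1-\la\om}\fk{\int_0^T\exp\fk{\frac{\om}{1-\la\om}\tau}d\tau}\ep_\la.
\end{eqnarray*}

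Next I would bound the two coefficients uniformly for small $\la$. As $\la\to 0$ we have $\frac{1}{1-\la\om}\to 1$ and $\frac{\om}{1-\la\om}\to\om$, so for $\la$ below a fixed threshold the integrand is bounded on $[0,T]$ and the entire factor multiplying $\ep_\la$ stays below a constant $C=C(K_0,\om,T)$ independent of $\la$. Hence $\nrm{\ul-v_\la}_Y\le C\ep_\la$, and since $\ep_\la\to 0$ by the hypothesis $\lim_{\la\to 0}\psi_\la=\psi$, letting $\la\to 0$ yields the claim.

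The only point requiring genuine care is the bookkeeping in the identification step: one must confirm that the control function $f_0^\la$ underlying (\ref{iterate_ineq_2}) measures the difference of the original starting elements $\psi_\la,\psi$ across all of $I(t)$, including the history interval $\ji$, rather than merely the difference of the glued approximate histories $\vp_{\psi_\la,\la},\vp_{\psi,\la}$. This is exactly how $f_0^\la$ was set up in the proof of Lemma \ref{ul-fixpoint}, where $T^0_{\la,\vp}\psi=\psi$, so the history contribution is already folded into the estimate. Beyond this, the result is a one-step consequence of the Gronwall-type inequality (\ref{iterate_ineq_2}), and I do not expect any further obstacle.
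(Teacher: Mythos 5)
Your proposal is correct and follows exactly the paper's own argument: the paper's proof consists of the single instruction to apply inequality (\ref{iterate_ineq_2}) from Lemma \ref{ul-fixpoint} with $\phi=\psi_{\la}$, which is precisely your identification $\ul=\Tlp\psi_{\la}$, $v_{\la}=\Tlp\psi$ followed by dominating the suprema by $\nrm{\psi_{\la}-\psi}_Y$ and bounding the coefficients uniformly for small $\la$. Your closing remark on $f_0^{\la}$ measuring the difference of the original starting elements over all of $I(t)$ is also the right reading of how Lemma \ref{ul-fixpoint} was set up, so no gap remains.
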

\begin{proof}
Apply inequality (\ref{iterate_ineq_2}) from Lemma \ref{ul-fixpoint}  with $\phi=\psi_{\la}$ and $\psi=\psi.$
\end{proof}

\begin{cor} \label{induction_step_2}

Under the conditions of the previous lemma, $\bk{u_{\la}}_{\la>0}$ and $\bk{v_{\la}}_{\la>0}$ are Cauchy in Y for $\la\to 0.$
\end{cor}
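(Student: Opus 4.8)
The plan is to reduce everything to the single family $\bk{v_{\la}}_{\la>0}$: by Lemma \ref{induction_step_1} we already know $\lim_{\la\to 0}\nrm{\ul-v_{\la}}_Y=0$, so it suffices to prove that $\bk{v_{\la}}_{\la>0}$ is Cauchy in $Y$ and then to transport this property to $\bk{\ul}_{\la>0}$ by the triangle inequality.

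First I would identify $v_{\la}$. Since $v_{\la}$ solves the problem with the fixed history $\psi$ and boundary value $\vp_{\psi,\la}$, it is exactly the fix point $\Tlp\psi$ of Lemma \ref{ul-fixpoint}, i.e. the solution of (\ref{psi-la-approx-gFDE}); by the uniqueness coming from Proposition \ref{T-la-defined} there is no ambiguity. On $\jz$ this is the Yosida approximation $\dtl$ of one single, $\la$-independent non-autonomous Cauchy problem governed by the fixed family $B(t):=A(t,\psi_t)$ with initial state $\vp(0)\in\overline{D}$, and on $\ji$ it equals $\vp_{\psi,\la}$. Hence, exactly as in the proof of Lemma \ref{iteration-possible} e), I would obtain the convergence of $v_{\la}$ as $\la\to 0$ by applying \cite[Theorem 2.9, p. 1058]{Kreulichevo} to the fixed family $B(t)$ on $\jz$ (which produces a limit for the initial state $\vp(0)\in\overline{D}$), and by using $\lim_{\la\to 0}\vp_{\psi,\la}=\vp$ on $\ji$ from Remark \ref{vp-la-minus-vp} (1), available since $\vp(0)\in\overline{D}$. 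Taking the supremum over $\ji\cup\jz$ shows that $v_{\la}$ converges in $Y$, and a convergent sequence is Cauchy.

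It then remains to pass to $\bk{\ul}$. For $\la,\mu>0$ the triangle inequality gives
\begin{equation*}
\nrm{\ul-\um}_Y\le \nrm{\ul-v_{\la}}_Y+\nrm{v_{\la}-v_{\mu}}_Y+\nrm{v_{\mu}-\um}_Y .
\end{equation*}
By Lemma \ref{induction_step_1} the first and the third term tend to $0$ as $\la,\mu\to 0$, and by the first step the middle term tends to $0$ as well because $\bk{v_{\la}}$ is Cauchy. Splitting a prescribed $\ep$ into thirds yields that $\bk{\ul}$ is Cauchy in $Y$, which is the assertion.

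The main obstacle is the Cauchy property of $\bk{v_{\la}}$, since this is genuinely a statement about \emph{different} values of the approximation parameter. It cannot be extracted from the comparison inequality (\ref{iterate_ineq_2}) of Lemma \ref{ul-fixpoint}, which only estimates two solutions sharing the \emph{same} $\la$; the $\la$-dependence must instead be controlled by the convergence theorem of \cite{Kreulichevo}. Concretely, the care needed is to verify that $B(t)=A(t,\psi_t)$ meets the hypotheses of \cite[Theorem 2.9]{Kreulichevo}, which follow from Assumption \ref{IVFDE_1} together with the modified control inequality (\ref{modified_control}) already used in Lemma \ref{iteration-possible}, and that the limit of the $\jz$-piece agrees with the boundary value inherited from $\vp_{\psi,\la}$ on $\ji$, so that the two pieces glue into a single element of $Y$.
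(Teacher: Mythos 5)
Your overall architecture (prove $\bk{v_{\la}}$ Cauchy, then transfer to $\bk{u_{\la}}$ via Lemma \ref{induction_step_1} and the triangle inequality, handling $\ji$ by Remark \ref{vp-la-minus-vp}) matches the paper, but the core step contains a genuine gap: you invoke Lemma \ref{iteration-possible} e), i.e.\ \cite[Theorem 2.9]{Kreulichevo}, directly for the data of the corollary, claiming it ``produces a limit for the initial state $\vp(0)\in\overline{D}$.'' That lemma is stated, and is used throughout the paper, only under the stronger hypotheses that the history $\psi$ is \emph{Lipschitz} and $\vp(0)\in\hat{D}$; these are what yield the equi-Lipschitz bounds (Lemma \ref{iteration-possible} a)--c)) on which the convergence theorem of \cite{Kreulichevo} rests. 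Under the conditions of Lemma \ref{induction_step_1} one only has $\psi\in Y$ arbitrary and $\vp(0)\in\overline{D}$, so with your composite control $\tilde{h}(t)=(h^{\om}(t),k(t),\psi_t)$ the function $\tilde{h}$ is merely uniformly continuous, not Lipschitz, and the initial state need not lie in $\hat{D}$. If Theorem 2.9 applied directly in this generality, the corollary would be immediate and the paper's proof pointless; the entire content of the paper's argument is precisely to bridge this regularity gap.

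The missing idea is a two-parameter regularization combined with the stability estimate (\ref{iterate_ineq}), which compares two solutions with the \emph{same} $\la$ but different histories or different initial values. The paper first approximates $\psi$ by Lipschitz functions $\psi^{\ep}$ (uniformly continuous extension plus mollifier), and bounds $\nrm{v_{\la}-v_{\la}^{\ep}}$ by (\ref{iterate_ineq}) in terms of $\nrm{\psi-\psi^{\ep}}$, uniformly in $\la$. Since $\vp(0)$ may still only lie in $\overline{D}$, it then introduces a second regularization $w_{\la}^{\ep,\alpha}$ with initial value $J_{\alpha}(0,\psi_0)\vp(0)\in\hat{D}$; only for this doubly regularized family do Lemma \ref{iteration-possible} and \cite[Thm 2.9]{Kreulichevo} legitimately give the Cauchy property as $\la\to 0$. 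A further application of (\ref{iterate_ineq}) shows $\nrm{v_{\la}^{\ep}-w_{\la}^{\ep,\alpha}}\le\ep$ for $\alpha,\la$ small, and a three-$\ep$ argument then transports Cauchyness back to $\bk{v_{\la}^{\ep}}$, to $\bk{v_{\la}}$, and finally to $\bk{u_{\la}}$. Your instinct that the $\la$-dependence must come from the convergence theorem rather than from (\ref{iterate_ineq_2}) is correct, but without the density-plus-resolvent approximation your appeal to that theorem is outside its hypotheses, and the proof as written does not close.
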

\begin{proof}
Using a uniform continuous extension of $\psi$ on $\re$, and a mollifier we find that the Lipschitz functions on $\ji\cup\jz$ are dense in $Y.$ Consider $\psi^{\ep}$ Lipschitz and the equation, 
\begin{equation} 
\left\{
\begin{array}{rcll} 
\fk{\partial_t}_{\la} v_{\la}^{\ep}(t) &\in & A(t,(\psi^{\ep})_t)v_{\la}^{\ep}(t)+\om v_{\la}^{\ep}(t)&:t\ge 0\\
(v_{\la}^{\ep})_{|\ji} &=& \vp_{\psi^{\ep},\la} &:t\in \ji.
\end{array}
\right.
\end{equation}
Applying Lemma \ref{iteration-possible} we find $\bk{v_{\la}^{\ep}}_{\la>0}$ for every $\psi^{\ep}$ arbitrary close to $\psi.$  
The use of (\ref{iterate_ineq}) from Lemma \ref{ul-fixpoint} and the triangle inequality gives
\begin{eqnarray*}
\nrm{v_{\la}(t)-v_{\mu}(t)} &\le & 
\nrm{v_{\la}(t)-v_{\la}^{\ep}(t)} + \nrm{v_{\la}^{\ep}(t)-v_{\mu}^{\ep}(t)}+\nrm{v_{\mu}^{\ep}(t)-v_{\mu}(t)} \\
&\le& \frac{\la K_0}{1-\la\om}\nrm{\psi_t-\psi^{\ep}_t}_E 
+\frac{K_0}{1-\la\om}\int_0^t\exp\fk{\frac{\om}{1-\la\om}\tau}\nrm{\psi_{t-\tau}-\psi^{\ep}_{t-\tau}}_E d\tau \\
&& + \nrm{v_{\la}^{\ep}(t)-v_{\mu}^{\ep}(t)} +\frac{\mu K_0}{1-\mu\om}\nrm{\psi_t-\psi^{\ep}_t}_E \\ 
&& +\frac{K_0}{1-\mu\om}\int_0^t\exp\fk{\frac{\om}{1-\mu\om}\tau}\nrm{\psi_{t-\tau}-\psi^{\ep}_{t-\tau}}_E d\tau. \\
\end{eqnarray*}
It remains to prove that $\bk{v_{\la}^{\ep}}_{\la>0} $ is Cauchy, as for small $\la>0$
\begin{eqnarray} \label{approx-vp}
\nrm{\vp_{\psi^{\ep},\la}(0)-\vp(0)}&=& \nrm{J_{\la}^{\om}(0,\psi_0^{\ep})\vp(0) - \vp(0)} \nonumber \\
&\le&\nrm{J_{\la}^{\om}(0,\psi_0^{\ep})\vp(0)-J_{\la}^{\om}(0,\psi_0)\vp(0)} +
 \nrm{J_{\la}^{\om}(0,\psi_0)\vp(0)-\vp(0)} \nonumber \\
 &\le& \nrE{\psi^{\ep}-\psi} + \nrm{J_{\la}^{\om}(0,\psi_0)\vp(0)-\vp(0)} \\
 &\le& \ep. \nonumber
\end{eqnarray}

For small $\la$ we consider for  an $\alpha >0$
\begin{equation} 
\left\{
\begin{array}{rcll} 
\fk{\partial_t}_{\la} w_{\la}^{\ep,\alpha}(t) &\in & A(t,(\psi^{\ep})_t)w_{\la}^{\ep,\alpha}(t)+\om w_{\la}^{\ep,\alpha}(t)&:t\in\jz\\
(w_{\la}^{\ep,\alpha})(0) &=& J_{\alpha}(0,\psi_0)\vp(0)\in \hat{D}.
\end{array}
\right.
\end{equation}
By Lemma \ref{iteration-possible} and  \cite[Thm 2.9, p. 1058]{Kreulichevo} $\bk{w^{\ep,\alpha}_{\la}}_{\la>0}$ is Cauchy when $\la\to 0.$ From Lemma \ref{ul-fixpoint}  we obtain with the pairs $(v^{\ep}_{\la}(t), J_{\la}^{\om}(0,\psi_0)\vp(0)),$ and $(w^{\ep,\alpha}_{\la}(t),\vp(0))$
\begin{eqnarray*}
\lefteqn{\nrm{v^{\ep}_{\la}(t)-w^{\ep,\alpha}_{\la}(t)}} \\
&\le&\frac{1}{1-\la\om} \exp(-t/\la)\nrm{J_{\la}^{\om}(0,\psi_0^{\ep})\vp(0)-J_{\alpha}^{\om}(0,\psi_0)\vp(0)}\\
&& +\exp\fk{\frac{\om t}{1-\la\om}}\nrm{J_{\la}^{\om}(0,\psi_0^{\ep})\vp(0)-J_{\alpha}^{\om}(0,\psi_0)\vp(0)} \\
&\le \ep
\end{eqnarray*}
when $\alpha,\la$ small. Which gives that $\bk{v_{\la}^{\ep}}_{\la>0} $ is Cauchy, consequently 
$\bk{v_{\la}}_{\la>0} $ and therefore $\bk{u_{\la}}_{\la>0} $ for $t\in \jz.$ For $t\in \ji,$ apply Remark \ref{vp-la-minus-vp} 
with $\psi=\psi^{\ep}.$
\end{proof}

By previous observations we are in the situation to do the induction.

\begin{lem} \label{iteration_convergent}
Let for $\jz=[0,T]$ Assumption \ref{IVFDE_1} and either Assumption \ref{IVFDE_2_og} or Assumption \ref{IVFDE_2} hold. Further let $\vp\in E$  with $\vp(0)\in\overline{D},$ and $\psi\in Y.$ 
If $u_{1,\la}$ is the solution to (\ref{recursion_start}) 
then 
$$
\lim_{\la \to 0}u_{1,\la}=u_1\in Y.
$$
If $\bk{u_{n,\la}}_{\la>0} \subset Y$ is the solution to the n-th step with 
$$\lim_{\la\to 0}u_{n,\la}=u_n \in Y,$$
and $u_{n+1,\la}$ is the solution to (\ref{recursion_step}),
then 
$$
\lim_{\la \to 0}u_{n+1,\la}=u_{n+1}\in Y.
$$

\end{lem}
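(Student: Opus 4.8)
The plan is to obtain both assertions as immediate consequences of the two preceding results, Lemma~\ref{induction_step_1} and Corollary~\ref{induction_step_2}, which were set up precisely for this inductive passage. The organising remark is that every approximant in the recursion has the form $\Tlp$ applied to a history, so that convergence as $\la\to 0$ amounts to the Cauchy property already secured in the space $Y$; since each admissible $Y$ is a sup-normed space of continuous functions and hence complete, Cauchy sequences converge and the limits automatically lie in $Y$.

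For the base case $n=1$ I would observe that $u_{1,\la}=\Tlp\psi$ coincides with the function $v_{\la}$ of Lemma~\ref{induction_step_1} associated to the fixed history $\psi$: both solve $\fk{\partial_t}_{\la}u\in A(t,(\psi)_t)u+\om u$ on $\jz$ with initial datum $\vp_{\psi,\la}$ on $\ji$. Choosing the constant sequence $\psi_{\la}:=\psi$ trivially verifies the hypothesis $\lim_{\la\to 0}\psi_{\la}=\psi$, so Corollary~\ref{induction_step_2} applies and gives that $\bk{u_{1,\la}}_{\la>0}$ is Cauchy in $Y$. Completeness then furnishes the limit $u_1\in Y$.

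For the step $n\to n+1$ I would put $\psi_{\la}:=u_{n,\la}$ and $\psi:=u_n$, so that the induction hypothesis $\lim_{\la\to 0}u_{n,\la}=u_n$ is exactly the convergence $\psi_{\la}\to\psi$ demanded by Lemma~\ref{induction_step_1}. With this identification $u_{n+1,\la}$ is the solution $u_{\la}$ of that lemma, its operator driven by the moving history $(u_{n,\la})_t$ and its initial state being $\vp_{u_{n,\la},\la}$, whereas the comparison solution $v_{\la}=\Tlp u_n$ uses the limit history $(u_n)_t$ and initial state $\vp_{u_n,\la}$. Lemma~\ref{induction_step_1} then yields $\lim_{\la\to 0}\nrm{u_{n+1,\la}-v_{\la}}_Y=0$, while Corollary~\ref{induction_step_2} yields that $\bk{v_{\la}}_{\la>0}$ is Cauchy, hence convergent to some $u_{n+1}\in Y$. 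A triangle inequality forces $u_{n+1,\la}\to u_{n+1}$ in $Y$, closing the induction.

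Since the analytical work is already carried by the quoted statements, I expect the only real care to be in checking that their hypotheses are honestly met. In particular $\vp(0)$ is assumed merely in $\overline{D}$ and $\psi$ merely in $Y$, not Lipschitz, so I would lean on the density of Lipschitz histories in $Y$ and the stability estimate (\ref{iterate_ineq}) that underlie Corollary~\ref{induction_step_2}, rather than on the Lipschitz-restricted part~e) of Lemma~\ref{iteration-possible}. The main obstacle is therefore bookkeeping: confirming that the moving history $\psi_{\la}=u_{n,\la}$ is a legitimate element of $Y$ for each $\la$—which holds because $u_{n,\la}=\Tlp^{n}\psi\in Y$—and that the two families $u_{n+1,\la}$ and $v_{\la}$ are aligned so that their common limit is well defined.
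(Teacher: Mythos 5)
Your proof is correct and takes essentially the same route as the paper, whose own one-line proof likewise settles the base case via Corollary~\ref{induction_step_2} (together with Lemma~\ref{iteration-possible}, which your density remark invokes implicitly, since the proof of that corollary reduces to Lipschitz histories $\psi^{\ep}$ anyway) and the inductive step via Lemma~\ref{induction_step_1} combined with Corollary~\ref{induction_step_2}. Your constant-sequence identification $\psi_{\la}:=\psi$ for $n=1$ and the triangle-inequality alignment of $u_{n+1,\la}$ with $v_{\la}=\Tlp u_n$ are exactly the intended mechanism.
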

\begin{proof} 
Apply Lemma \ref{iteration-possible} and Corollary \ref{induction_step_2} to the start of the induction, the induction step follows by Lemma \ref{induction_step_1} and Corollary \ref{induction_step_2}.
\end{proof}

We are ready to state the main result of this section on the existence of a solution to (\ref{gFDEonInterval}) for the finite and infinite delay case , and for arbitrary $\om\in \re.$
As we found a sequence of functions $\bk{u_n}_{n\in\za}$ it remains to prove their convergence in $Y,$ and the independence of the starting point $\psi$ of the recursion.

\begin{theo} \label{0-T-solution}
Let for $\jz=[0,T]$ Assumption \ref{IVFDE_1} and either Assumption \ref{IVFDE_2_og} or Assumption \ref{IVFDE_2} hold. Further let $\vp\in E$  with $\vp(0)\in\overline{D},$ and $\psi\in Y.$ 
The sequence $\bk{u_n}_{n\in\za}$ defined in Lemma \ref{iteration_convergent} is uniformly convergent on $\ji\cup\jz.$ As the solution is independent of the approximation we call this limit the solution to (\ref{gFDEonInterval}) on $\ji\cup\jz.$
\end{theo}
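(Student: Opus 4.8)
The plan is to show that $\bk{u_n}_{n\in\za}$ is a Cauchy sequence in the Banach space $Y$ by telescoping, the only tool being the iterative estimate of Corollary \ref{independence}, and then to reuse that same estimate for two distinct starting histories to obtain the asserted independence. Since every admissible $Y$ is a space of (bounded, uniformly) continuous functions under the supremum norm, hence complete, Cauchyness will immediately furnish the limit $u\in Y$, and the convergence will automatically be uniform on $\ji\cup\jz$.

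First I would apply the iterative inequality (\ref{iterative_inequality}) to the pair of starting functions $\psi$ and $\Tlp\psi$. Because $T^n_{\la,\vp}(\Tlp\psi)=u_{n+1,\la}$ and $T^n_{\la,\vp}\psi=u_{n,\la}$, and because for this pair $f_0^{\la}(T)=\sup_{\tau\in\ji\cup\jz}\nrm{\psi(\tau)-\Tlp\psi(\tau)}=\nrm{\psi-\Tlp\psi}_Y$, this yields
$$
\nrm{u_{n+1,\la}-u_{n,\la}}_Y\le \nrm{\fk{\frac{\la K_0}{1-\la\om}I+S_{\la}}^n}\,\nrm{\Tlp\psi-\psi}_Y .
$$
Letting $\la\to 0$ and invoking Lemma \ref{iteration_convergent} (so that the left side tends to $\nrm{u_{n+1}-u_n}_Y$ and $\nrm{\Tlp\psi-\psi}_Y\to\nrm{u_1-\psi}_Y$), the whole matter reduces to controlling $\limsup_{\la\to 0}\nrm{(\frac{\la K_0}{1-\la\om}I+S_{\la})^n}$.

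The main obstacle, and the heart of the argument, is to convert the qualitative quasinilpotency of $S_{\la}$ recorded in (\ref{spectrum}) into a quantitative, summable, and $\la$-uniform bound; the bare spectral radius $\frac{\la K_0}{1-\la\om}$ only governs the $n\to\infty$ behaviour at fixed $\la$, which is the wrong order of limits. For this I would exploit that $S_{\la}$ is a Volterra convolution with kernel $\frac{K_0}{1-\la\om}\exp(\frac{-\om s}{1-\la\om})$, bounded on $[0,T]$ by a constant $C$ independent of $\la$ for small $\la$. The standard Volterra estimate then gives $\nrm{S_{\la}^{\,j}}\le (CT)^j/j!$ uniformly in small $\la$, and since $I$ commutes with $S_{\la}$,
$$
\nrm{\fk{\frac{\la K_0}{1-\la\om}I+S_{\la}}^n}\le \sum_{j=0}^{n}\binom{n}{j}\fk{\frac{\la K_0}{1-\la\om}}^{\!n-j}\frac{(CT)^j}{j!} .
$$
As $\la\to 0$ the factor $\frac{\la K_0}{1-\la\om}\to 0$ annihilates every term with $j<n$, leaving $\limsup_{\la\to 0}\nrm{(\cdots)^n}\le (CT)^n/n!$. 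Hence $\nrm{u_{n+1}-u_n}_Y\le \frac{(CT)^n}{n!}\nrm{u_1-\psi}_Y$, whose sum over $n$ converges, so $\bk{u_n}$ is Cauchy and converges uniformly on $\ji\cup\jz$ to some $u\in Y$.

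For the independence statement I would run the identical computation for two starting histories $\psi,\phi\in Y$, writing $\tilde u_{n,\la},\tilde u_n$ for the iterates and limits attached to $\phi$. Corollary \ref{independence} applied to this pair gives $\nrm{u_{n,\la}-\tilde u_{n,\la}}_Y\le \nrm{(\frac{\la K_0}{1-\la\om}I+S_{\la})^n}\nrm{\psi-\phi}_Y$, and passing $\la\to 0$ exactly as above yields $\nrm{u_n-\tilde u_n}_Y\le \frac{(CT)^n}{n!}\nrm{\psi-\phi}_Y\to 0$. Since both sequences converge (by the first part applied to each starting point), their limits must coincide, which is precisely the independence of the constructed solution from the approximating starting point $\psi$.
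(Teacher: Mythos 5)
Your proposal is correct, and it reaches the same factorial decay as the paper, but by a genuinely different organization of the two limits. The paper first passes to the limit $\la\to 0$ in the single-step estimate (\ref{iterate_ineq}), obtaining the integral inequality $\nrm{u_{n+1}(t)-u_n(t)}\le K_eK_0\int_0^t\nrm{(u_n)_\tau-(u_{n-1})_\tau}_E\,d\tau$ for the limit functions themselves, and then iterates this Gronwall-type inequality (with the base bound $\nrm{(u_1)_t-\psi_t}_E\le K$ coming from the uniform boundedness in Lemma \ref{iteration-possible}) to get $\nrm{(u_{n+1})_t-(u_n)_t}_E\le (K_eK_0KT)^n/n!$; no operator-norm calculus is needed for the convergence part. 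You instead iterate at fixed $\la$, via Corollary \ref{independence} applied to the legitimate pair $(\psi,\Tlp\psi)$ (indeed $T^n_{\la,\vp}(\Tlp\psi)=\Tlp^{n+1}\psi=u_{n+1,\la}$), and only then let $\la\to 0$; this obliges you to supply the uniform-in-$\la$ Volterra bound $\nrm{S_{\la}^{\,j}}\le (CT)^j/j!$ together with the commuting binomial expansion of $\bigl(\frac{\la K_0}{1-\la\om}I+S_{\la}\bigr)^n$, and you correctly diagnose that the fixed-$\la$ spectral radius $\frac{\la K_0}{1-\la\om}$ from (\ref{spectrum}) is the wrong quantity for this order of limits. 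What the paper's route buys is economy; what yours buys is a single self-contained estimate that serves both halves of the theorem, and in fact it is sharper than the paper's own treatment of the independence step, where the passage from $\nrm{\bigl(\frac{\la K_0}{1-\la\om}I+S_{\la}\bigr)^n}$ to $\nrm{S_0^n}$ as $\la\to 0$ and the appeal to ``$S_0$ quasi-nilpotent'' are left qualitative: your bound $\limsup_{\la\to 0}\nrm{\bigl(\frac{\la K_0}{1-\la\om}I+S_{\la}\bigr)^n}\le (CT)^n/n!$ justifies that passage explicitly and quantifies the quasi-nilpotency at the same time.
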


\begin{proof}
As $(u_n)_{|\ji}=\vp$ it remains to consider $t\in\jz.$
From (\ref{iterate_ineq}) we obtain with the definition of the operator $\Tlp$ given by (\ref{T_la_def}),
\begin{eqnarray*}
\lefteqn{\nrm{u_{\la,n+1}(t)-u_{\la,n}(t)}} \\
&=&\nrm{(\Tlp u_{\la,n})_t -(\Tlp u_{\la,n-1})_t} \\
&\le & \frac{\la K_0}{1-\la\om}\nrm{(u_{\la,n})_t-(u_{\la,n-1})_t}_E 
+\frac{K_eK_0}{1-\la\om}\int_0^t\nrm{(u_{\la,n})_{t-\tau}-(u_{\la,n-1})_{t-\tau}}_E d\tau. 
\end{eqnarray*}
By Lemma \ref{iteration_convergent} we may pass to $\la \to 0$ and the previous inequality becomes,
\begin{eqnarray*}
\nrm{u_{n+1}(t)-u_{n}(t)} &\le&  K_eK_0\int_0^t\nrm{(u_{n})_{t-\tau}-(u_{n-1})_{t-\tau}}_E d\tau \\
&=&K_eK_0\int_0^t\nrm{(u_{n})_{\tau}-(u_{n-1})_{\tau)}}_E d\tau.
\end{eqnarray*}
As the integral is non-decreasing we derive,
$$
\nrE{(u_{n+1})_t-(u_{n})_t} \le K_eK_0\int_0^t\nrm{(u_{n})_{t-\tau}-(u_{n-1})_{t-\tau}}_E  d\tau.
$$
From Lemma \ref{iteration-possible} we have $u_{1,\la}$ uniformly bounded and therefore $u_1$ is bounded.
Lemma \ref{ul-fixpoint} leads by the limit $\la\to 0$
\begin{eqnarray*}
\nrm{(u_2)_t-(u_1)_t}_E&\le & K_eK_0 \int_0^t \nrm{(u_1)_{t-\tau}-\psi_{t-\tau}}_E d\tau \\
&\le& K_eK_0 \int_0^t K d\tau \\
&=& K_eK_0 K t. \\
\end{eqnarray*}
Iterating the inequality we find,
\begin{eqnarray*}
\nrm{(u_{n+1})_t-(u_n)_t}_E&\le&\frac{(K_eK_0K T)^{n}}{n!}
\end{eqnarray*}
which yields $\bk{u_n}_{n\in\za}$ uniformly Cauchy, and by the completeness of $Y$ we find a limit. Next we show that the limit is independent of the starting point $\psi.$ For two starting points $\psi,\phi\in Y$ we have by inequality (\ref{iterative_inequality}) of corollary \ref{independence},
$$
\nrm{T^n_{\la,\vp}\psi-T^n_{\la,\vp}\phi}_Y\le \nrm{\fk{\frac{\la K_0}{1-\la\om}I+S_{\la}}^n}\nrm{\psi-\phi}_Y.
$$
As we may pass to $\la\to 0$ we conclude with $T^n_{\la,\vp}\psi\to u_n,$ and $T^n_{\la,\vp}\phi\to v_n,$
$$
\nrm{u_n-v_n}_Y\le \nrm{S_{0}^n}\nrm{\psi-\phi}_Y.
$$
Using $S_0$ quasi-nilpotent we finish the proof when passing to $n\to \infty.$
\end{proof}

\section{Half-Line Functional Differential Equations} \label{Half-Line-FDEs}
In this section we show that under sufficient conditions on $\om$ we can conclude the convergence on $\jz=\rep$ of the approximation stated in Recursion \ref{recursion}. Moreover, some results on the asymptotic behavior on the half line are given.

\begin{lem}\label{iteration-possible_halfline}
Let for $\jz=\rep,$  Assumption \ref{IVFDE_1}, and either Assumption \ref{IVFDE_2_og} with $\om <0$, or Assumption \ref{IVFDE_2} with $L_g<-\om$ hold. If $\psi \in Y$  Lipschitz and $\vp \in E$ with $\vp(0) \in \overline{D},$ then 
\begin{enumerate}
 \item[a)] $\bk{\Tlp\psi}_{\la>0}$ is uniformly bounded on $\jz.$
  \item[b)] $\Tlp\psi$ is equi Lipschitz on $\jz$ in the case of  Assumption \ref{IVFDE_2} and $L_g<-\om:$
 \item[c)]$\Tlp\psi$ is uniformly equicontinuous on $\jz$ in the case of  Assumption \ref{IVFDE_2_og} and $\om <0.$
 \item[d)]there exists $u_1 \in Y$ s.t $\lim_{\la\to 0} \Tlp\psi \to u_1$ uniformly on $\ji \cup \jz.$ 
 \end{enumerate}
\end{lem}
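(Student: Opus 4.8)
The plan is to run the proof of Lemma~\ref{iteration-possible} essentially unchanged, but to feed the non-autonomous family $B(t):=A(t,\psi_t)$ into the \emph{half-line} versions of the results of \cite{Kreulichevo}, whose standing hypotheses are precisely the sign conditions imposed here. First I would record, exactly as in the derivation of (\ref{modified_control}), that Assumption~\ref{IVFDE_2} collapses to a single control inequality for $\bk{B(t)}_{t\in\rep}$ with modulus $\tilde h$ and control $L^{\om}$ (and similarly for Assumption~\ref{IVFDE_2_og}). Proposition~\ref{T-la-defined} together with the half-line form of \cite[Lemma~3.1]{Kreulichevo} then shows that the fixed point operator $F$ still produces $\Tlp\psi\in BUC(\rep,X)$, so that $\Tlp:Y\to Y$ remains well defined; note also $\Tlp\psi(0)=\Jlw(0,\psi_0)\vp(0)\in D(A(0,\psi_0))\subset\hat D$.

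The genuinely new point is uniformity of the estimates over the unbounded interval. On $[0,T]$ the bound $K'(\la+t)$ of Lemma~\ref{iteration-possible}(a) is worthless as $t\to\infty$; instead I would exploit that in the iteration estimate (\ref{iterate_ineq}) the convolution weight is $\exp\fk{\frac{\om}{1-\la\om}\tau}$ and the boundary contribution carries $\exp\fk{\frac{\om t}{1-\la\om}}$. For Assumption~\ref{IVFDE_2_og} with $\om<0$ both factors decay exponentially, uniformly for small $\la$, so the convolution converges uniformly in $t\in\rep$; this gives the uniform bound (a) and the uniform equicontinuity (c). For Assumption~\ref{IVFDE_2} the $g$-term $\la\nrm{g(t_1)-g(t_2)}\nrm{y_2}$ injects growth at the Lipschitz rate $L_g$ of $g$, which competes with the dissipation rate $\om$; the hypothesis $L_g<-\om$ makes the net exponent $L_g+\om<0$ negative, so that setting $u_0=\vp(0)$ places us verbatim in the half-line analogue of \cite[Lemma~3.2]{Kreulichevo}, yielding both (a) and the equi-Lipschitz estimate (b).

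For the convergence (d) I would treat $t\in\ji$ and $t\in\jz$ separately. On $\ji$ the assertion is Remark~\ref{vp-la-minus-vp}(1). On $\jz$, since here only $\vp(0)\in\overline D$ (not $\hat D$), I would first replace $\vp(0)$ by $J_\alpha(0,\psi_0)\vp(0)\in\hat D$, apply the half-line form of \cite[Theorem~2.9, p.~1058]{Kreulichevo} to obtain convergence as $\la\to 0$ for each approximant, and control the difference between the approximate and the exact solution uniformly in $t$ by (\ref{iterate_ineq}), whose boundary term again decays like $\exp\fk{\frac{\om t}{1-\la\om}}$ under $\om<0$. Passing to the limit in the approximation exactly as in Corollary~\ref{induction_step_2} then produces the uniform limit $u_1\in Y$ on $\ji\cup\jz$.

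The hard part will be the transition from a finite to an infinite time horizon: one must check that the exponential weights generated by the perturbed resolvent $\Jlw$ decay \emph{uniformly in $\la$} for all small $\la$, so that the $t$-integrals stay bounded as $T\to\infty$ and the constants furnished by the half-line results of \cite{Kreulichevo} are independent of $t\in\rep$. This is exactly where the sign conditions are indispensable---$\om<0$ under Assumption~\ref{IVFDE_2_og}, and the strict domination $L_g<-\om$ of the $g$-perturbation by the dissipation under Assumption~\ref{IVFDE_2}---and verifying this $t$-independence is the crux of the argument.
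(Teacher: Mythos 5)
Your proposal matches the paper's proof essentially verbatim: the paper likewise derives the modified control inequality for $B(t):=A(t,\psi_t)$ exactly as in Lemma \ref{iteration-possible} and then simply cites the half-line results of \cite{Kreulichevo} (Proof of Lemma 3.2 eq.~(16) and Corollary 3.3 for a), Corollary 3.4 for the equicontinuity, Theorem 2.17 and Corollary 3.6 for the uniform convergence), whose standing hypotheses are precisely the sign conditions $\om<0$, resp.\ $L_g<-\om$, that you identify as the crux. Your expanded justifications --- the uniform-in-$\la$ exponential decay of the weights and the replacement of $\vp(0)\in\overline{D}$ by $J_{\alpha}(0,\psi_0)\vp(0)\in\hat{D}$ for part d) --- are exactly the content of those cited results and of the paper's own Corollary \ref{induction_step_2_halfline}, so no genuinely different route is taken.
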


\begin{proof} 
Similar to the proof of Lemma \ref{iteration-possible} we derive from Assumption \ref{IVFDE_2} the needed inequality for $B(t):=A(t,\psi_t).$ By \cite[Proof of Lemma 3.2 equation(16) p. 1064, and Corollary 3.3]{Kreulichevo} we obtain a). 
To verify c) apply \cite[Corollary 3.4, p. 1068]{Kreulichevo}, e) comes with \cite[Theorem 2.17, p.1061]{Kreulichevo}. d) is a consequence of \cite[Corollary 3.6, p. 1069]{Kreulichevo}.
\end{proof}

\begin{lem} \label{induction_step_1_halfline}
Let for $\jz=\rep$ Assumption \ref{IVFDE_1} and either Assumption \ref{IVFDE_2_og} with $\om <0$ or Assumption \ref{IVFDE_2} with $L_g<-\om$ hold. Further, let $\vp \in E$  with $\vp(0)\in\overline{D},$ $\bk{\psi_{\la}}_{\la>0}\subset Y$ and $\psi\in Y.$ 
If
$$
\lim_{\la\to 0} \psi_{\la}=\psi, \mbox{ in } Y,
$$
$\ul\in Y$ the solution to
\begin{equation} 
\left\{
\begin{array}{rcll} 
\fk{\partial_t}_{\la} \ul(t) &\in & A(t,(\psi_{\la})_t)\ul(t)+\om \ul(t)&:t\in\jz\\
(u_{\la})_{|\ji} &=& \vp_{\psi_{\la},\la}, &
\end{array}
\right.
\end{equation}
and $v_{\la}\in Y$ the solution to 
\begin{equation} 
\left\{
\begin{array}{rcll} 
\fk{\partial_t}_{\la} v_{\la}(t) &\in & A(t,(\psi)_t)v_{\la}(t)+\om v_{\la}(t)&:t\in\jz 0\\
(v_{\la})_{|\ji} &=& \vp_{\psi,\la}, &
\end{array}
\right.
\end{equation}
then 
$$
\lim_{\la\to 0}\nrm{\ul-v_{\la}}_Y=0.
$$
\end{lem}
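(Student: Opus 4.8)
The plan is to reduce this half-line statement to the bounded-interval version already proven in Lemma \ref{induction_step_1}, using the half-line regularity that Lemma \ref{iteration-possible_halfline} provides under the sign conditions $\om<0$ (for Assumption \ref{IVFDE_2_og}) or $L_g<-\om$ (for Assumption \ref{IVFDE_2}). First I would record that, because $\lim_{\la\to 0}\psi_{\la}=\psi$ in $Y=BUC$ on the whole half-line, the approximating histories are eventually confined to a bounded set $F\subset E$; together with $\vp(0)\in\overline{D}$ this puts us squarely under the hypotheses of the comparison inequality (\ref{iterate_ineq_2}) from Lemma \ref{ul-fixpoint}. That inequality was derived for $\jz=[0,T]$ but its proof is purely local in $t$ and uses only Assumption \ref{IVFDE_2} (or \ref{IVFDE_2_og}) together with Remark \ref{history_control}, so it holds verbatim for each finite horizon $[0,T]$ regardless of whether $\jz=[0,T]$ or $\jz=\rep$.

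The key analytical step is to upgrade the pointwise/finite-horizon estimate to a genuine uniform-in-$t$ bound on $[0,\infty)$, and this is where the sign condition on $\om$ is essential. Applying (\ref{iterate_ineq_2}) with $\phi=\psi_{\la}$ and $\psi=\psi$, and abbreviating $\delta_{\la}:=\nrm{\psi_{\la}-\psi}_Y$, I would obtain for the functions
$$
g_{\la}(t):=\sup_{x\in I(t)}\nrm{u_{\la}(x)-v_{\la}(x)}
$$
an integral inequality of the form
$$
g_{\la}(t)\le \frac{\la K_0}{1-\la\om}\,\delta_{\la}
+\frac{K_0}{1-\la\om}\int_0^t\exp\fk{\frac{\om}{1-\la\om}\tau}\,g_{\la}(t-\tau)\,d\tau.
$$
Because $\om<0$ the exponential kernel decays, so the convolution operator $S_{\la}$ appearing in Lemma \ref{ul-fixpoint} and Corollary \ref{independence} has, on the half-line, operator norm bounded by $\frac{K_0}{1-\la\om}\int_0^{\infty}\exp\fk{\frac{\om}{1-\la\om}\tau}\,d\tau=\frac{K_0}{-\om}$, which is strictly less than $1$ precisely when $K_0<-\om$; under the stated sign conditions this contraction property is exactly what the half-line regularity of Lemma \ref{iteration-possible_halfline} (via \cite[Corollary 3.3, Corollary 3.4]{Kreulichevo}) guarantees. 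Solving the inequality by the resolvent of $S_{\la}$ then yields $\sup_{t\ge 0}g_{\la}(t)\le C\,\delta_{\la}$ with $C$ independent of $\la$, and since $\delta_{\la}\to 0$ the claim follows on $\jz$. For $t\in\ji$ the difference is controlled by Remark \ref{vp-la-minus-vp} and the estimate (\ref{E_inequality}), both of which are $\la$-local and transfer to the half-line without change.

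The main obstacle I anticipate is the uniform integrability of the kernel: on a bounded interval the factor $\int_0^t\exp\fk{\frac{\om}{1-\la\om}\tau}\,d\tau$ is automatically finite, but on $[0,\infty)$ one must verify that it stays bounded \emph{uniformly in} $\la$ as $\la\to 0$, which forces the sign condition and an application of the half-line results of \cite{Kreulichevo} rather than a naive Gr\"onwall argument. A secondary technical point is confirming that $\bk{\psi_{\la}}$ remains in a fixed bounded set $F\subset E$ so that Remark \ref{A_t_vp_uniformly_bd} supplies a single constant $K$ controlling $\btr{A(t,\vp)x}$ over all the histories simultaneously; this is immediate from the convergence $\psi_{\la}\to\psi$ in $Y$ but should be stated explicitly before invoking the equi-Lipschitz and uniform-boundedness conclusions of Lemma \ref{iteration-possible_halfline}.
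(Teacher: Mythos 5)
There is a genuine gap, and it is concentrated in your key integral inequality. Your overall plan is exactly the paper's proof (which consists of one line: apply inequality (\ref{iterate_ineq_2}) from Lemma \ref{ul-fixpoint} with $\phi=\psi_{\la}$), but you mis-transcribed that inequality. In (\ref{iterate_ineq_2}) the quantity inside the convolution integral is $\sup_{x\in I(t-\tau)}\nrm{\psi(x)-\phi(x)}$, i.e.\ the difference of the \emph{input histories}, not of the solutions: the Gr\"onwall step has already been carried out inside the proof of Lemma \ref{ul-fixpoint} via Lemma \ref{s-t-integral-inequality}. Your version, with $g_{\la}(t-\tau)$ under the integral, is self-referential, and to solve it you must invert $\frac{\la K_0}{1-\la\om}I+S_{\la}$ on $BUC(\rep)$, which requires the half-line contraction bound $\nrm{S_{\la}}\le K_0/(-\om)<1$, i.e.\ $K_0<-\om$. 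That condition is \emph{not} among the hypotheses of this lemma --- it enters only later, in Theorem \ref{halfline-uniform-convergent} --- and your assertion that it is ``exactly what the half-line regularity of Lemma \ref{iteration-possible_halfline} guarantees'' is false: $L_g<-\om$ and $K_0<-\om$ are independent conditions ($L_g$ is the Lipschitz constant of the control function $g$, while $K_0$ is the history-coupling constant), and \cite[Corollaries 3.3, 3.4]{Kreulichevo} yield boundedness and equicontinuity of the approximants, not any norm estimate for $S_{\la}$.

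With the inequality quoted correctly, the argument closes immediately and needs only $\om<0$ (which both hypotheses supply, since $L_g\ge 0$): setting $\delta_{\la}:=\nrm{\psi_{\la}-\psi}_Y$ and applying (\ref{iterate_ineq_2}) with $\phi=\psi_{\la},$
$$
\sup_{x\in I(t)}\nrm{u_{\la}(x)-v_{\la}(x)}
\le \fk{\frac{\la K_0}{1-\la\om}
+\frac{K_0}{1-\la\om}\int_0^{\infty}\exp\fk{\frac{\om\tau}{1-\la\om}}d\tau}\delta_{\la}
=\fk{\frac{\la K_0}{1-\la\om}+\frac{K_0}{-\om}}\delta_{\la}\longrightarrow 0
$$
uniformly in $t\ge 0,$ no contraction or resolvent argument required. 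Two further remarks: no transfer from $[0,T]$ to the half-line is needed at all, since Lemma \ref{ul-fixpoint} is stated for general $\jz$ (your ``purely local in $t$'' discussion is superfluous); and your treatment of $t\in\ji$ via (\ref{E_inequality}) and Remark \ref{vp-la-minus-vp}, as well as the observation that $\bk{\psi_{\la}}$ lies in a bounded subset of $Y$, are correct but the latter is tangential to the estimate actually used.
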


\begin{proof}
Apply inequality (\ref{iterate_ineq_2}) from Lemma \ref{ul-fixpoint}  with $\phi=\psi_{\la}$ and $\psi=\psi.$
\end{proof}

\begin{cor} \label{induction_step_2_halfline}
Under the conditions of the previous lemma $\bk{v_{\la}}_{\la>0},$ and $\bk{\ul}_{\la>0}$ are Cauchy in Y for $\la\to 0.$
\end{cor}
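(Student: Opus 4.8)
The plan is to transcribe the argument of Corollary \ref{induction_step_2} to the half-line, exploiting one genuine simplification: on $\rep$ the hypotheses of Lemma \ref{iteration-possible_halfline} require only $\vp(0)\in\overline{D}$ (not $\vp(0)\in\hat{D}$), so the auxiliary regularization $w_\la^{\ep,\alpha}$ needed in the bounded-interval proof is no longer necessary. First I would recall that, using a uniformly continuous extension of $\psi$ together with a mollifier, the Lipschitz functions are dense in $Y$; hence for every $\ep>0$ one may pick a Lipschitz $\psi^{\ep}\in Y$ with $\nrm{\psi-\psi^{\ep}}_Y\le\ep$. Writing $v_{\la}=\Tlp\psi$ and $v_{\la}^{\ep}=\Tlp\psi^{\ep}$, Lemma \ref{iteration-possible_halfline} d) applies \emph{directly} to the Lipschitz datum $\psi^{\ep}$, giving $\lim_{\la\to0}v_{\la}^{\ep}=u_1^{\ep}$ uniformly on $\ji\cup\jz$; in particular $\bk{v_{\la}^{\ep}}_{\la>0}$ is Cauchy in $Y$.

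Next I would control the approximation error uniformly in $t\in\rep$. Applying inequality (\ref{iterate_ineq}) of Lemma \ref{ul-fixpoint} with the common initial history $\vp_1=\vp_2=\vp$ and the two frozen histories $\psi,\psi^{\ep}$, the boundary term $\nrm{\vp_1(0)-\vp_2(0)}$ vanishes and only
\begin{eqnarray*}
\nrm{v_{\la}(t)-v_{\la}^{\ep}(t)}
&\le& \frac{\la K_0}{1-\la\om}\nrm{\psi_t-\psi^{\ep}_t}_E\\
&&+\frac{K_0}{1-\la\om}\int_0^t\exp\fk{\frac{\om}{1-\la\om}\tau}\nrm{\psi_{t-\tau}-\psi^{\ep}_{t-\tau}}_E\,d\tau
\end{eqnarray*}
survives for $t\in\jz$, while on $\ji$ the difference is bounded by (\ref{E_inequality}). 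The crucial point is that both admissible hypotheses force $\om<0$: under Assumption \ref{IVFDE_2} this follows from $L_g<-\om$ together with $L_g\ge0$, and under Assumption \ref{IVFDE_2_og} it is assumed outright. Hence $\int_0^{\infty}\exp\fk{\frac{\om}{1-\la\om}\tau}\,d\tau=\frac{1-\la\om}{-\om}$ is finite, and bounding $\nrm{\psi_{t-\tau}-\psi^{\ep}_{t-\tau}}_E\le\nrm{\psi-\psi^{\ep}}_Y\le\ep$ produces a bound on $\nrm{v_{\la}(t)-v_{\la}^{\ep}(t)}$ of order $\ep$ with a constant independent of both $t$ and $\la$. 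Taking the supremum over $\ji\cup\rep$ yields $\nrm{v_{\la}-v_{\la}^{\ep}}_Y\le C\ep$ with $C$ independent of $\la$.

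The Cauchy property of $\bk{v_{\la}}$ then follows from the triangle inequality
$$
\nrm{v_{\la}-v_{\mu}}_Y\le\nrm{v_{\la}-v_{\la}^{\ep}}_Y+\nrm{v_{\la}^{\ep}-v_{\mu}^{\ep}}_Y+\nrm{v_{\mu}^{\ep}-v_{\mu}}_Y,
$$
where the outer terms are at most $C\ep$ uniformly and the middle term is small for $\la,\mu$ small since $\bk{v_{\la}^{\ep}}_{\la>0}$ is Cauchy; letting first $\la,\mu\to0$ and then $\ep\to0$ gives that $\bk{v_{\la}}_{\la>0}$ is Cauchy in $Y$. For $\bk{\ul}$ I would combine this with Lemma \ref{induction_step_1_halfline}: since $\nrm{\ul-v_{\la}}_Y\to0$, the estimate
$$
\nrm{\ul-\um}_Y\le\nrm{\ul-v_{\la}}_Y+\nrm{v_{\la}-v_{\mu}}_Y+\nrm{v_{\mu}-\um}_Y
$$
shows $\bk{\ul}_{\la>0}$ is Cauchy as well. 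The main obstacle is precisely the passage from a compact interval to the unbounded half-line: every smallness and Cauchy estimate must now be uniform in $t\in\rep$, and this uniformity is supplied by the sign condition $\om<0$ through the integrable exponential weight $\exp\fk{\frac{\om}{1-\la\om}\tau}$, which replaces the automatic uniformity that compactness furnished in Corollary \ref{induction_step_2}.
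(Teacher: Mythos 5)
Your proof is correct, and it follows the paper's overall skeleton: density of Lipschitz data in $Y$ via extension and mollification, the stability estimate (\ref{iterate_ineq}) with $\vp_1=\vp_2=\vp$ to get $\nrm{v_{\la}-v_{\la}^{\ep}}_Y\le C\ep$ uniformly in $\la$, the triangle inequality, and Lemma \ref{induction_step_1_halfline} to transfer the Cauchy property from $\bk{v_{\la}}_{\la>0}$ to $\bk{\ul}_{\la>0}$. You diverge from the paper at the one substantive step, namely the Cauchy property of $\bk{v_{\la}^{\ep}}_{\la>0}$. The paper does \emph{not} invoke Lemma \ref{iteration-possible_halfline} d) directly on $\psi^{\ep}$; instead it introduces auxiliary approximants $w_{\la}^{\ep}$ with constant initial history $\vp(0)$, obtains their Cauchy property from Lemma \ref{iteration-possible_halfline} together with the half-line convergence result for non-autonomous Cauchy problems \cite[Thm 2.13 (2), p. 1060]{Kreulichevo}, and then compares $v_{\la}^{\ep}$ with $w_{\la}^{\ep}$ via Lemma \ref{ul-fixpoint} applied to the pairs $\fk{v_{\la}^{\ep}(t),J_{\la}^{\om}(0,\psi_0^{\ep})\vp(0)}$ and $\fk{w_{\la}^{\ep}(t),\vp(0)}$, using $J_{\la}^{\om}(0,\psi_0^{\ep})\vp(0)\to\vp(0)$ for $\vp(0)\in\overline{D}$ (the half-line analogue of (\ref{approx-vp})). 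Your shortcut is legitimate relative to Lemma \ref{iteration-possible_halfline} as stated, since on the half line its hypotheses require only $\vp(0)\in\overline{D}$, and you correctly noted that this is exactly why the $\alpha$-regularization $w_{\la}^{\ep,\alpha}$ of the bounded-interval proof can be dropped; what your route buys is one fewer auxiliary problem and one fewer comparison estimate, while the paper's detour buys independence from the full strength of part d), resting only on the more primitive Cauchy-problem ingredient from \cite{Kreulichevo} — so your argument stands or falls with the stated hypotheses of Lemma \ref{iteration-possible_halfline} d). A genuine improvement on the paper's exposition is your explicit verification that $\om<0$ holds under both hypothesis sets (from $L_g\ge 0$ and $L_g<-\om$ in the case of Assumption \ref{IVFDE_2}) and the resulting uniform bound $\int_0^{\infty}\exp\fk{\frac{\om\tau}{1-\la\om}}d\tau=\frac{1-\la\om}{-\om}$, which makes the uniformity in $t\in\rep$ explicit where the paper leaves it implicit.
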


\begin{proof}
Using a uniform continuous extension of $\psi$ on $\re$, and a mollifier we find that the Lipschitz functions on $\ji\cup\jz$ are dense in $Y.$ Consider $\psi^{\ep}$ Lipschitz and the equation 
for 
\begin{equation} 
\left\{
\begin{array}{rcll} 
\fk{\partial_t}_{\la} v_{\la}^{\ep}(t) &\in & A(t,(\psi^{\ep})_t)v_{\la}^{\ep}(t)+\om v_{\la}^{\ep}(t)&:t\in \jz\\
(v_{\la}^{\ep})_{|\ji} &=& \vp_{\psi^{\ep},\la}.&
\end{array}
\right.
\end{equation}
The use of inequality (\ref{iterate_ineq}) from Lemma \ref{ul-fixpoint} and the triangle inequality gives
\begin{eqnarray*}
\nrm{v_{\la}(t)-v_{\mu}(t)} &\le & 
\nrm{v_{\la}(t)-v_{\la}^{\ep}(t)} + \nrm{v_{\la}^{\ep}(t)-v_{\mu}^{\ep}(t)}+\nrm{v_{\mu}^{\ep}(t)-v_{\mu}(t)} \\
&\le& \frac{\la K_0}{1-\la\om}\nrm{\psi_t-\psi^{\ep}_t}_E 
+\frac{K_0}{1-\la\om}\int_0^t\exp\fk{\frac{\om}{1-\la\om}\tau}\nrm{\psi_{t-\tau}-\psi^{\ep}_{t-\tau}}_E d\tau \\
&& + \nrm{v_{\la}^{\ep}(t)-v_{\mu}^{\ep}(t)} +\frac{\mu K_0}{1-\mu\om}\nrm{\psi_t-\psi^{\ep}_t}_E \\
&& + \frac{K_0}{1-\mu\om}\int_0^t\exp\fk{\frac{\om}{1-\mu\om}\tau}\nrm{\psi_{t-\tau}-\psi^{\ep}_{t-\tau}}_E d\tau. \\
\end{eqnarray*}
It remains to prove that $\bk{v_{\la}^{\ep}}_{\la>0} $ is Cauchy.
Similar to (\ref{approx-vp}) we get
$$
\vp_{\psi^{\ep},\la}(0)=J_{\la}^{\om}(0,\psi_0)\vp(0) \to \vp(0)
$$
when $\la\to 0,$ we consider
\begin{equation} 
\left\{
\begin{array}{rcll} 
\fk{\partial_t}_{\la} w_{\la}^{\ep}(t) &\in & A(t,(\psi^{\ep})_t)w_{\la}^{\ep}(t)+\om w_{\la}^{\ep}(t)&:t\in\jz\\
(w_{\la}^{\ep})_{|\ji} &=& \vp(0).
\end{array}
\right.
\end{equation}
By Lemma \ref{iteration-possible_halfline} and \cite[Thm 2.13 (2), p. 1060]{Kreulichevo} $\bk{w^{\ep}_{\la}}_{\la>0}$ is Cauchy when $\la\to 0.$ From Lemma \ref{ul-fixpoint}  we obtain with the pairs $(v^{\ep}_{\la}(t), J_{\la}^{\om}(0,\psi_0)\vp(0)),$ and $(w^{\ep}_{\la}(t),\vp(0))$
\begin{eqnarray*}
\lefteqn{\nrm{v^{\ep}_{\la}(t)-w^{\ep}_{\la}(t)}} \\
&\le&\frac{1}{1-\la\om} \exp(-t/\la)\nrm{J_{\la}^{\om}(0,\psi_0^{\ep})\vp(0)-\vp(0)} +\exp\fk{\frac{\om t}{1-\la\om}}\nrm{J_{\la}^{\om}(0,\psi_0^{\ep})\vp(0)-\vp(0)} \\
&\le& \ep.
\end{eqnarray*}
for small $\la>0.$ Which gives that $\bk{v_{\la}}_{\la>0} $ is Cauchy, and therefore $\bk{u_{\la}}_{\la>0} $ for $t\in \jz.$ For $t\in\ji$ recall Remark \ref{vp-la-minus-vp}.
Which implies that $\bk{v_{\la}}_{\la>0}$ is Cauchy and therefore $\bk{u_{\la}}_{\la>0}.$
\end{proof}

\begin{lem} \label{iteration_convergent_halfline}
Let for $\jz=\rep$ Assumption \ref{IVFDE_1} and either Assumption \ref{IVFDE_2_og} with $\om <0$ or Assumption \ref{IVFDE_2} with $L_g<-\om$ hold. Further, let $\vp\in E$  with $\vp(0)\in\overline{D},$ and $\psi\in Y.$ 
If $u_{1,\la}$ is the solution to (\ref{recursion_start}) 
then 
$$
\lim_{\la \to 0}u_{1,\la}=u_1\in Y.
$$
If $\bk{u_{n,\la}}_{\la>0} \subset Y$ is the solution to the n-th step with 
$$\lim_{\la\to 0}u_{n,\la}=u_n \in Y,$$
and $u_{n+1,\la}$ is the solution to (\ref{recursion_step}),
then 
$$
\lim_{\la \to 0}u_{n+1,\la}=u_{n+1}\in Y.
$$
\end{lem}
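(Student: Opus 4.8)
The plan is to transcribe the proof of the bounded-interval statement, Lemma \ref{iteration_convergent}, replacing each ingredient by its half-line counterpart from this section: the initialization $n=1$ will be handled by Lemma \ref{iteration-possible_halfline} together with Corollary \ref{induction_step_2_halfline}, and the step $n\to n+1$ by Lemma \ref{induction_step_1_halfline} together with Corollary \ref{induction_step_2_halfline}. Since the conclusion is a statement about convergence in $Y$ for fixed $n$, the whole argument is an induction on $n$, with the $\la\to 0$ limit carried out at each level.

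For the base case I would observe that $u_{1,\la}$ is exactly $\Tlp\psi$ with the frozen history $\psi$, i.e.\ the solution of (\ref{recursion_start}). If $\psi$ is Lipschitz, part d) of Lemma \ref{iteration-possible_halfline} already gives a $u_1\in Y$ with $\Tlp\psi\to u_1$ uniformly on $\ji\cup\rep$. For a general $\psi\in Y$ this is not directly available, because Lemma \ref{iteration-possible_halfline} requires a Lipschitz history; here I would instead invoke Corollary \ref{induction_step_2_halfline}, whose proof approximates $\psi$ by Lipschitz $\psi^{\ep}$ (dense in $Y$) and shows that the frozen-history family $\{u_{1,\la}\}_{\la>0}$ is Cauchy in $Y$ as $\la\to 0$. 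Completeness of $Y$ then yields $\lim_{\la\to 0}u_{1,\la}=u_1\in Y$.

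For the induction step, assuming $\lim_{\la\to 0}u_{n,\la}=u_n\in Y$, I would apply Lemma \ref{induction_step_1_halfline} with the choice $\psi_{\la}=u_{n,\la}$ and $\psi=u_n$. The hypothesis $\psi_\la\to\psi$ is exactly the induction assumption; the solution carrying the varying history $\psi_\la=u_{n,\la}$ is $u_{n+1,\la}$ itself, solving (\ref{recursion_step}), while the solution $v_\la$ carrying the frozen history $u_n$ is an auxiliary comparison sequence. The lemma gives $\lim_{\la\to 0}\nrm{u_{n+1,\la}-v_{\la}}_Y=0$, and Corollary \ref{induction_step_2_halfline} gives that $\{v_\la\}_{\la>0}$ is Cauchy in $Y$, hence convergent to some $u_{n+1}\in Y$. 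Combining the two limits yields $\lim_{\la\to 0}u_{n+1,\la}=u_{n+1}$, which closes the induction.

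The point demanding care, and the reason this is a separate lemma rather than a corollary of Lemma \ref{iteration_convergent}, is that every estimate must now hold uniformly on the unbounded interval $\rep$. This uniformity is precisely what the sign restrictions $\om<0$ (under Assumption \ref{IVFDE_2_og}) and $L_g<-\om$ (under Assumption \ref{IVFDE_2}) purchase, through the exponential decay of the factors $\exp(\om\tau/(1-\la\om))$ in the inequalities (\ref{iterate_ineq}) and (\ref{iterate_ineq_2}) that control the histories on all of $\rep$. Because these conditions are already built into the statements of Lemma \ref{iteration-possible_halfline}, Lemma \ref{induction_step_1_halfline}, and Corollary \ref{induction_step_2_halfline}, no work beyond citing them is required, and I expect the only genuine subtlety to be the handling of a non-Lipschitz starting history $\psi$ in the base case, which is exactly what the density-and-Cauchy argument of Corollary \ref{induction_step_2_halfline} resolves.
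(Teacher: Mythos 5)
Your proposal is correct and follows essentially the same route as the paper, whose proof consists precisely of citing Lemma \ref{iteration-possible_halfline} and Corollary \ref{induction_step_2_halfline} for the base case and Lemma \ref{induction_step_1_halfline} with Corollary \ref{induction_step_2_halfline} for the induction step. Your additional detail — choosing $\psi_{\la}=u_{n,\la}$, $\psi=u_n$ in Lemma \ref{induction_step_1_halfline} and handling non-Lipschitz $\psi$ through the density-and-Cauchy argument already built into Corollary \ref{induction_step_2_halfline} — is exactly how the cited results are meant to be combined.
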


\begin{proof} 
Apply Lemma \ref{iteration-possible_halfline} and Corollary \ref{induction_step_2_halfline} to the start the induction, the induction step follows by Lemma \ref{induction_step_1_halfline} and Corollary \ref{induction_step_2_halfline}.
\end{proof}

\begin{theo} \label{halfline-uniform-convergent}
Let for $\jz=\rep$ Assumption \ref{IVFDE_1} and either Assumption \ref{IVFDE_2_og} with $K_0<-\om,$ or Assumption \ref{IVFDE_2} with $\max\bk{K_0,L_g}<-\om$ hold. Further let $\vp \in E$  with $\vp(0)\in\overline{D},$ and $\psi\in Y.$ 
The sequence defined in the Lemma \ref{iteration_convergent_halfline} is uniformly convergent on $\ji\cup\jz.$
As the limit is independent of the starting point $\psi,$ we call it the solution to (\ref{gFDEonInterval}) on $\ji\cup\jz.$
\end{theo}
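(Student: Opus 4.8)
The plan is to follow the scheme of Theorem \ref{0-T-solution}, but to replace the finite-interval factorial estimate by a contraction estimate that draws the decisive gain from the dissipativity $\om<0$ together with the smallness hypothesis $K_0<-\om$. Since $(u_n)_{|\ji}=\vp$ for every $n$, only the behaviour on $\jz=\rep$ has to be controlled. Writing $u_{n+1,\la}=\Tlp u_{n,\la}$ and $u_{n,\la}=\Tlp u_{n-1,\la}$, both with the common initial history $\vp$, inequality (\ref{iterate_ineq}) of Lemma \ref{ul-fixpoint} applied with $\vp_1=\vp_2=\vp$ (so that the initial-value term drops out) gives
\begin{eqnarray*}
\nrm{u_{n+1,\la}(t)-u_{n,\la}(t)} &\le& \frac{\la K_0}{1-\la\om}\nrE{(u_{n,\la})_t-(u_{n-1,\la})_t} \\
&& +\frac{K_0}{1-\la\om}\int_0^t\exp\fk{\frac{\om}{1-\la\om}\tau}\nrE{(u_{n,\la})_{t-\tau}-(u_{n-1,\la})_{t-\tau}}d\tau.
\end{eqnarray*}
The hypotheses of the theorem imply those of Lemma \ref{iteration_convergent_halfline} (as $K_0>0$ forces $\om<0$, and $\max\bk{K_0,L_g}<-\om$ forces $L_g<-\om$), so $u_{n,\la}\to u_n$ and $u_{n-1,\la}\to u_{n-1}$ in $Y$ as $\la\to 0$; the first term then vanishes and the kernel tends to $e^{\om\tau}$, leaving
$$
\nrm{u_{n+1}(t)-u_n(t)}\le K_0\int_0^t e^{\om\tau}\nrE{(u_n)_{t-\tau}-(u_{n-1})_{t-\tau}}\,d\tau.
$$

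The decisive step is to turn this into a genuine contraction on $Y$. Using $\nrE{(u_n)_\sigma-(u_{n-1})_\sigma}\le\nrm{u_n-u_{n-1}}_Y$ for every $\sigma\ge 0$ and the uniform integrability $\int_0^\infty e^{\om\tau}\,d\tau=\frac{1}{-\om}$ (here $\om<0$), the right-hand side is bounded by $\frac{K_0}{-\om}\nrm{u_n-u_{n-1}}_Y$ independently of $t$, whence
$$
\nrm{u_{n+1}-u_n}_Y\le \frac{K_0}{-\om}\,\nrm{u_n-u_{n-1}}_Y.
$$
Because $K_0<-\om$, the factor $q:=\frac{K_0}{-\om}$ is strictly below $1$, so $\nrm{u_{n+1}-u_n}_Y\le q^{\,n-1}\nrm{u_2-u_1}_Y$ and $\bk{u_n}_{n\in\za}$ is uniformly Cauchy on $\ji\cup\jz$; completeness of $Y$ furnishes the limit.

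For independence of the starting point the same estimate does the work. With both solutions carrying the initial history $\vp$, inequality (\ref{iterate_ineq}) together with (\ref{E_inequality}) on $\ji$ shows that $\Tlp$ is Lipschitz on $Y$ with constant $q_\la:=\frac{\la K_0}{1-\la\om}+\frac{K_0}{-\om}$, so that $\nrm{\Tlp^n\psi-\Tlp^n\phi}_Y\le q_\la^{\,n}\nrm{\psi-\phi}_Y$ for any two starting points $\psi,\phi\in Y$; equivalently one may quote (\ref{iterative_inequality}) and estimate the operator norm of $\frac{\la K_0}{1-\la\om}I+S_\la$ by $q_\la$, the Volterra operator $S_\la$ being here a contraction of norm $\le\frac{K_0}{-\om}$ rather than merely quasi-nilpotent. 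Fixing $n$ and letting $\la\to 0$, with $\Tlp^n\psi\to u_n$, $\Tlp^n\phi\to v_n$ and $q_\la\to q$, yields $\nrm{u_n-v_n}_Y\le q^{\,n}\nrm{\psi-\phi}_Y\to 0$ as $n\to\infty$; hence the two limits agree and the solution does not depend on $\psi$.

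The principal obstacle is the loss of the factorial (quasi-nilpotency) mechanism available on $[0,T]$: there the exponential was swallowed by a constant $K_e$ and convergence stemmed from the nilpotent factor $(\cdot)^n/n!$ (cf.\ Theorem \ref{0-T-solution}), whereas on $\rep$ that constant is no longer finite. One must instead verify that the weight $e^{\om\tau}$ coming from $\om<0$ is uniformly integrable over all of $\rep$ and that the resulting contraction constant $\frac{K_0}{-\om}$ is genuinely less than $1$ and is preserved in the limit $\la\to 0$ — which is exactly what $K_0<-\om$ secures, while $L_g<-\om$ is needed one step earlier, to produce the iterates $u_n$ themselves through Lemma \ref{iteration_convergent_halfline}.
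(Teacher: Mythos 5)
Your proposal is correct and follows essentially the same route as the paper's proof: both pass to the limit $\la\to 0$ in the key estimate of Lemma \ref{ul-fixpoint} (you use (\ref{iterate_ineq}) pointwise where the paper uses the running-sup version (\ref{iterate_ineq_2}), an immaterial difference), extract the contraction factor $\frac{K_0}{-\om}<1$ from $\int_0^t e^{\om\tau}\,d\tau\le\frac{1}{-\om}$, and settle independence of the starting point via Corollary \ref{independence} with the bound $\nrm{S_0^n}\le\fk{\frac{K_0}{-\om}}^n$. No gaps.
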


\begin{proof}
As $(u_n)_{|\ji}=\vp$ it remains to consider $t\in\jz.$ Let $t\in\jz$, and $n\ge 1.$ The we have by 
Lemma \ref{ul-fixpoint} and inequality (\ref{iterate_ineq_2}) that

\begin{eqnarray*} 
\lefteqn{\sup_{x\in I(t)}\nrm{T^{n+1}_{\la,\vp}\psi(x) -T^{n}_{\la,\vp}\phi(x)}}\nonumber \\
&\le&  \frac{\la K_0}{1-\la\om}\sup_{x\in I(t)}\nrm{T^n_{\la,\vp}\psi(x)-T^{n-1}_{\la,\vp}\phi(x)} \nonumber \\
&&+ \frac{K_0}{1-\la\om}\int_0^t\exp\fk{\frac{\om}{1-\la\om}\tau}\sup_{x\in I(t-\tau)}        \nrm{T^n_{\la,\vp}\psi(x)-T^{n-1}_{\la,\vp}\phi(x)} d\tau.
\end{eqnarray*}
As we may pass $\la\to 0$, we obtain,
\begin{eqnarray*}
\sup_{x\in I(t)}\nrm{u_{n+1}(x)-u_n(x)} &\le&K_0 \int_0^t\exp\fk{\om \tau}\sup_{x\in I(t-\tau)}\nrm{u_n(x)-u_{n-1}(x)}d\tau \\
&\le& \frac{K_0}{\btr{\om}}(1-\exp(\om t)\sup_{x\in I(t)}\nrm{u_n(x)-u_{n-1}(x)}.
\end{eqnarray*}
Consequently we have
$$
\nrm{u_{n+1}-u_n}_Y \le \fk{\frac{K_0}{-\om}}^n \nrm{u_1-\psi}_Y
$$
and $K_0<-\om$ gives $\bk{u_n}_{n\in\za}$ is Cauchy in $Y.$ Thus it remains to prove the independence on the starting point $\psi.$
For two starting points $\psi,\phi\in Y$ we have by inequality (\ref{iterative_inequality}) of corollary \ref{independence},
$$
\nrm{T^n_{\la,\vp}\psi-T^n_{\la,\vp}\phi}_Y\le \nrm{\fk{\frac{\la K_0}{1-\la\om}I+S_{\la}}^n}\nrm{\psi-\phi}_Y.
$$
As we may pass $\la\to 0$ we conclude with $T^n_{\la,\vp}\psi\to u_n,$ and $T^n_{\la,\vp}\phi\to v_n,$
$$
\nrm{u_n-v_n}_Y\le \nrm{S_{0}^n}\nrm{\psi-\phi}_Y.
$$
Using $\nrm{S_0^n}\le (\frac{K}{-\om})^n$ we finish the proof when passing to $n\to \infty.$
\end{proof}

\section{Asymptotic Behavior of the Solution}

\begin{theo}
For $\jz=\rep$ let Assumption \ref{IVFDE_1} and either Assumption \ref{IVFDE_2_og} with $K_0<-\om,$ or Assumption \ref{IVFDE_2} with $\max\bk{K_0,L_g}<-\om$ hold. Furthermore, let $\vp \in E$ with $\vp(0)\in\overline{D},$ and let $Y=\bk{f \in BUC(\ji\cup\jz): \lim_{t\to\infty}f(t)=0}.$ If 
\begin{equation} \label{C_0 condition}
\bk{t \mapsto J_{\la}(t,\psi_t)0} \in Y \ \mbox{ for all } \psi\in Y, 
\end{equation}
then the solution $u$ to (\ref{gFDEonInterval}) is an element of $Y.$
\end{theo}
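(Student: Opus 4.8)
The plan is to re-run the construction of Theorem \ref{halfline-uniform-convergent} entirely inside the closed subspace $Y$, checking that no approximant ever leaves it. Since by Theorem \ref{halfline-uniform-convergent} the solution does not depend on the starting element $\psi$ of the recursion, I would first choose $\psi\in Y$. Because $Y$ is a closed subspace of $BUC(\ji\cup\jz,X)$ and all the limits in Lemma \ref{iteration_convergent_halfline} and Theorem \ref{halfline-uniform-convergent} are uniform, it then suffices to show that every iterate $u_{n,\la}=\Tlp^n\psi$ belongs to $Y$: the limit $u_n=\lim_{\la\to0}u_{n,\la}$ stays in $Y$ by closedness, and so does $u=\lim_{n\to\infty}u_n$.

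Thus the whole statement reduces to the single claim that $\Tlp$ maps $Y$ into $Y$. On $\ji$ the value is $\vp_{\psi,\la}\in BUC(\ji,X)$, which plays no role for the limit $t\to+\infty$, so I only need the fix point $u_{1,\la}^+$ of
$$
F(v)(t)=\Jlw(t,\psi_t)\fk{\nres{t}\vp(0)+\frac{1}{\la}\int_0^t\nres{\tau}v(t-\tau)\,d\tau}
$$
to vanish at infinity. Under $K_0<-\om$ one has $\om<0$, and since $\bk{x\mapsto\Jlw(t,\psi_t)x}$ is Lipschitz with constant $\frac{1}{1-\la\om}$ while the kernel $\frac1\la\nres{\cdot}$ has $L^1$-norm $1$, the map $F$ is a strict contraction of $BUC(\jz,X)$ with constant $\frac{1}{1-\la\om}<1$. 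Hence it is enough to prove that the closed subspace $Y_0:=\bk{v\in BUC(\jz,X):\lim_{t\to\infty}v(t)=0}$ is invariant under $F$, for then the unique fix point of $F$ automatically lies in $Y_0$.

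To establish $F(Y_0)\subset Y_0$, fix $v\in Y_0$ and decompose, with $w_v(t):=\nres{t}\vp(0)+\frac1\la\int_0^t\nres{\tau}v(t-\tau)\,d\tau$,
$$
F(v)(t)=\Jlw(t,\psi_t)0+\bfk{\Jlw(t,\psi_t)w_v(t)-\Jlw(t,\psi_t)0}.
$$
The first term equals $J_{\frac{\la}{1-\la\om}}(t,\psi_t)0$, which lies in $Y$ by the hypothesis (\ref{C_0 condition}) (applied with the admissible parameter $\frac{\la}{1-\la\om}$), and hence tends to $0$. The second term is bounded by $\frac{1}{1-\la\om}\nrm{w_v(t)}$ via the Lipschitz estimate, and $\nrm{w_v(t)}\to0$: the summand $\nres{t}\vp(0)$ decays, while the convolution of the vanishing function $v$ with the probability density $\frac1\la\nres{\cdot}$ tends to $0$ by the usual splitting of $\int_0^t$ into $\int_0^{t-M}$ (where $\nrm{v}$ is already small) and $\int_{t-M}^t$ (where the kernel is exponentially small). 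Therefore $F(v)\in Y_0$.

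With $F(Y_0)\subset Y_0$ secured, the property propagates along the recursion: if $u_{n,\la}\in Y$, then feeding the history $(u_{n,\la})_t$ into (\ref{C_0 condition}) with $\psi=u_{n,\la}$ and repeating the computation gives $u_{n+1,\la}=\Tlp u_{n,\la}\in Y$. Passing to the limits $\la\to0$ and then $n\to\infty$ inside the closed space $Y$, and using that the limit is independent of $\psi$, yields $u\in Y$. I expect the only genuinely delicate point to be the transition from the nonlinear resolvent $\Jlw(t,\psi_t)$ to its value at $0$; everything hinges on the Lipschitz decomposition above, which isolates precisely the term governed by the hypothesis (\ref{C_0 condition}), whereas the decay of the convolution and the contraction estimate are routine.
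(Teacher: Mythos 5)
Your proposal is correct and follows essentially the same route as the paper: both reduce the claim to showing that the fixpoint map $F$ preserves the space of functions vanishing at infinity, via the same decomposition relative to $t\mapsto J_{\la}^{\om}(t,\psi_t)0$ (identified with $J_{\frac{\la}{1-\la\om}}(t,\psi_t)0$ so that hypothesis (\ref{C_0 condition}) applies) together with the Lipschitz bound $\frac{1}{1-\la\om}$, then propagate through the recursion and use closedness of $Y$ under the iterated uniform limits. Your additional observations --- that uniqueness of the fixpoint of the strict contraction forces it into the invariant closed subspace, and the explicit splitting argument for the decay of the convolution --- merely make explicit details the paper leaves implicit.
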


\begin{proof}
As we obtain $ u$ as an iterated uniform limit,
$$
u=\lim_{n\to\infty}\lim_{\la\to 0}u_{n,\la}
$$
it remains to show that $u_{n,\la}\in Y.$ 
Using (\ref{C_0 condition}) implies that $\bk{t\mapsto \Jl^{\om}(t,\psi_t)0}\in Y$, Thus for $u\in Y $ we have
\begin{eqnarray*}
\lefteqn{\nrm{\Jl^{\om}(t,(\psi)_t)\fk{\nres{t}\vp(0)+\frac{1}{\la}\int_0^t\nres{\tau}u(t-\tau)d\tau}-
\Jl^{\om}(t,\psi_t)0}} \\
&\le& \frac{1}{1-\la\om}\nrm{\fk{\nres{t}\vp(0)+\frac{1}{\la}\int_0^t\nres{\tau}u(t-\tau)d\tau}}\\
&& \to 0, \mbox{ when } t\to \infty
\end{eqnarray*}
for all $u\in Y.$
Thus,
$$
\Funk{F}{Y}{Y}{u}{\bk{t\mapsto \Jl^{\om}(t,(\psi)_t)\fk{\nres{t}\vp(0)+\frac{1}{\la}\int_0^t\nres{\tau}u(t-\tau)d\tau}}}
$$
Hence,$\bk{u_{1,\la}}_{\la>0} \subset Y,$  and 
therefore $\bk{u_{n,\la}}_{\la>0,n\in\za},$ and the proof is finished.
\end{proof}

As we are interested in several types of almost periodicity in the next theorem it is shown how to obtain solutions in a general closed and translation invariant subspace $Z\subset BUC(\rep,X).$
\begin{theo} \label{half-line-asymptotics}
Let for $\jz=\rep$ Assumption \ref{IVFDE_1} and either Assumption \ref{IVFDE_2_og} with $\om <0$ or Assumption \ref{IVFDE_2} with $\max\bk{K_0,L_g}<-\om$ hold. Further let $\vp\in E$ with $\vp(0)\in\overline{D}.$ 
If for a closed and translation invariant subspace $Z$ with $C_0(\rep,X)\subset Z \subset BUC(\rep,X),$
$$
Y=\bk{f \in BUC(\ji\cup\jz,X): f_{|\rep}\in Z}
$$
and 
$$
\bk{t \mapsto J_{\la}^{\om}(t,\psi_t)f(t)} \in Z \ \mbox{ for all } \psi\in Y, f\in Z
$$
then the solution $u$ to (\ref{gFDEonInterval}) is an element of $Y.$
\end{theo}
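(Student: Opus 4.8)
The plan is to follow the proof of the preceding ($C_0$) theorem, but now to exploit the stronger hypothesis in the form ``$f\mapsto\bk{t\mapsto\Jlw(t,\psi_t)f(t)}$ maps $Z$ into $Z$''. By Theorem \ref{halfline-uniform-convergent} the solution is the iterated uniform limit $u=\lim_{n\to\infty}\lim_{\la\to 0}u_{n,\la}$, and $u_{|\ji}=\vp\in BUC(\ji,X)$; since $Z$ is closed under uniform convergence it suffices to prove $(u_{n,\la})_{|\rep}\in Z$ for every $n$ and $\la$, and then to pass to the two closed limits $\la\to 0$ and $n\to\infty$. Through Recursion \ref{recursion} and Proposition \ref{T-la-defined} this reduces to a single assertion, to be proved by induction on $n$: the fixed point $u_{n,\la}^+$ of the map $F$ lies in $Z$.

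First I would treat the fixed-point step. Writing $g(t):=\nres{t}\vp(0)+\frac{1}{\la}\int_0^t\nres{\tau}u(t-\tau)d\tau$, Proposition \ref{T-la-defined} gives $F(u)(t)=\Jlw(t,\psi_t)g(t)$, where at level $n$ the history provider is $\psi=u_{n-1,\la}$ (and the given starting history $\psi$ when $n=1$), which lies in $Y$ by the induction hypothesis. If I can show $g\in Z$ whenever $u_{|\rep}\in Z$, then applying the hypothesis with $f=g$ and this $\psi\in Y$ yields $F(u)\in Z$. Consequently $F$ maps the closed set $\bk{u:u_{|\rep}\in Z}$ into itself; since its fixed point is the uniform limit of the iterates $F^k(0)$ (Banach fixed point principle, Proposition \ref{T-la-defined}), all of which lie in $Z$, closedness gives $u_{n,\la}^+\in Z$.

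It remains to verify $g\in Z$, which is the heart of the argument. The summand $\nres{\cdot}\vp(0)$ lies in $C_0(\rep,X)\subset Z$, so everything hinges on the convolution term $\mathcal V u(t):=\frac{1}{\la}\int_0^t\nres{\tau}u(t-\tau)d\tau$. Extending $u$ by the constant $u(0)$ on $(-\infty,0)$ to $\hat u\in BUC(\re,X)$, a direct computation gives $\mathcal V u(t)=\fk{k*\hat u}(t)-\nres{t}u(0)$ for $t\ge 0$, with kernel $k=\frac{1}{\la}\nres{\cdot}\mathbf 1_{[0,\infty)}\in L^1(\re)$; the correction $\nres{\cdot}u(0)$ lies in $C_0(\rep,X)\subset Z$, so it remains to place $\fk{k*\hat u}_{|\rep}$ in $Z$. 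I would do this by approximating $k*\hat u$ uniformly by finite Riemann sums $\sum_i k(\tau_i)\hat u(\cdot-\tau_i)\Delta\tau_i$, each term being a scalar multiple of a $\tau_i$-delay of $\hat u$; using the translation invariance of $Z$ together with $C_0(\rep,X)\subset Z$ to absorb the bounded initial segments $[0,\tau_i]$, each such delay restricts to an element of $Z$, and closedness under the uniform limit then gives $\fk{k*\hat u}_{|\rep}\in Z$. The main obstacle is exactly this last point: the operator $\mathcal V$ is causal, hence assembled from backward shifts, whereas the invariance of $Z$ is most naturally a forward property; reconciling the two on the half-line is what forces the initial-segment errors into $C_0(\rep,X)$, and it is the interplay of closedness, translation invariance and the containment $C_0(\rep,X)\subset Z$ that makes $\mathcal V u\in Z$ succeed. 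Once this is in hand, the induction on $n$ together with the two closed limits $\la\to 0$ and $n\to\infty$ delivers $u\in Y$.
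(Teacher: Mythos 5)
Your proposal follows the same route as the paper: reduce everything to showing that each fixpoint map $F_{n,\la}$ (with history provider $\psi$ for $n=1$ and $u_{n-1,\la}\in Y$ thereafter) maps the closed subspace $\bk{u: u_{|\rep}\in Z}$ into itself, conclude $u_{n,\la}\in Y$ by induction on $n$, and then pass through the two uniform limits $\la\to 0$, $n\to\infty$ of Theorem \ref{halfline-uniform-convergent} using closedness of $Z$. The one place you go beyond the paper is the heart-of-the-argument step: the paper simply \emph{asserts} that $\nres{t}\vp(0)+\frac{1}{\la}\int_0^t\nres{\tau}u(t-\tau)d\tau$ lies in $Z$ whenever $u_{|\rep}\in Z$, whereas you prove it via the decomposition through $\hat u$ and Riemann sums of translates. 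That extra detail is welcome, but note the caveat it exposes: your convolution operator is causal, so the translates appearing in the Riemann sums are \emph{delays} $\hat u(\cdot-\tau_i)$, and membership of a delayed function (with continuous fill on $[0,\tau_i]$, well defined modulo $C_0(\rep,X)\subset Z$) in $Z$ does not follow from invariance under forward shifts $f\mapsto f(\cdot+s)$, $s\ge 0$, alone; equivalently, $\frac{1}{\la}\int_0^t\nres{\tau}u(t-\tau)d\tau$ is the resolvent of the delay semigroup, not of the forward translation semigroup. So your argument (and, implicitly, the paper's unproved assertion) requires reading ``translation invariant'' in the two-sided sense that delays of elements of $Z$ again lie in $Z$ modulo $C_0(\rep,X)$; this is harmless for the intended examples such as $Z=AAP(\rep,X)$ or $Z=C_0(\rep,X)$, where the non-$C_0$ part extends to the whole line and delays act naturally, but it is worth stating explicitly as a hypothesis. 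With that reading fixed, your proof is correct and is in substance the paper's proof with the key membership step actually carried out.
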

\begin{proof}
For given $\psi \in Y $ and $u\in Z$ we have 
$$
\nres{t}\vp(0)+\frac{1}{\la}\int_0^t\nres{\tau}u(t-\tau)d\tau \in Z
$$
and consequently
$$
\bk{t\mapsto \Jl^{\om}(t,\psi_t)\fk{\nres{t}\vp(0)+\frac{1}{\la}\int_0^t\nres{\tau}u(t-\tau)d\tau}} \in Z.
$$
Hence, we can define the fixpoint mapping,
$$
\Funk{F_{0,\la}}{Z}{Z}{u}{\bk{t\mapsto \Jl^{\om}(t,(\psi)_t)\fk{\nres{t}\vp(0)+\frac{1}{\la}\int_0^t\nres{\tau}u(t-\tau)d\tau}}}
$$
and consequently $T_{\la}\psi_{|\rep} \in Z$ for every $\la>0.$
Assuming that $(u^+_{n-1,\la})_{|\rep}\in Z$ we have $(u_{n-1,\la})_{|\rep}\in Y$ for every $\la>0.$ Consequently, we obtain for every $\la >0$ that
$$
\Funk{F_{n,\la}}{Z}{Z}{u}{\bk{t\mapsto \Jl^{\om}(t,(u_{n-1,\la})_t)\fk{\nres{t}\vp(0)+\frac{1}{\la}\int_0^t\nres{\tau}u(t-\tau)d\tau}},}
$$
has fixpoint $u\in Z$ which gives 
$$
u_{n,\la}(t)=T_{\la,\vp}^n\psi(t)=
\left\{\begin{array}{rcl}
u(t)  &:& t\in\jz \\
\vp_{u_{n,\la},\la}(t) &:& t \in \ji,
\end{array} \right.
$$
which is an element of $Y,$ for all $n\in \za$ and $\la >0.$
Applying Theorem \ref{halfline-uniform-convergent} we obtain
$$
\lim_{n\to\infty}\lim_{\la\to 0}u_{n,\la}=u\in Y,
$$
 and the proof is finished.
\end{proof}

\section{Integral Solutions of Abstract Functional Differential Equations}
In this section we will show that the solution found in Theorem \ref{0-T-solution} is, under some prerequisites the mild or integral solution to the corresponding Cauchy Problem, with $x_0=\vp(0).$

\begin{equation} \label{Cauchy-equation}
\left\{
\begin{array}{rcll} 
w^{\prime}(t) &\in & B(t)w(t)+\om w(t)+f(t)&:t\in\jz\\
w(0) &=& x_0,
\end{array}
\right.
\end{equation}

with  $B(t)=A(t,u_t)$ and $u$ the solution to (\ref{gFDEonInterval}). In doing this we need some additional regularity of the solution $u.$
\begin{defi}
A function $u:[a,b]\to X$ is called a mild solution to (\ref{Cauchy-equation}) if there exist $u_{\la}:[a,b]\to X$
such that
\begin{eqnarray*}
(\partial_t)_{\la}u_{\la}(t)&\in& B(t)u_{\la}(t)+\om u_{\la}(t)+f(t) \mbox{ for all } t\in [a,b] \\
\ul(0)&\to& x_0 \mbox{ when } \la \to 0.
\end{eqnarray*}
and $\ul\to u $ on $[a,b]$ when $\la\to 0.$
\end{defi}

\begin{lem} \label{u-vp}

Let for $\jz=[0,T]$ Assumption \ref{IVFDE_1} and either Assumption \ref{IVFDE_2_og} or \ref{IVFDE_2} hold.
If $\vp$ Lipschitz, and $\vp(0)\in\hat{D},$ $t\in[0,T],$ then for some $K>0,$
\begin{enumerate}
\item
$$ \btr{A(t,(\up{\la}{n-1})_t)\vp(0)} \le K \mbox{ for all } \la>0, n\in\za$$
\item
$$
\nrm{\up{\la}{n}(t)-\vp(0)}\le K(\la+t), \mbox{ for all } \la>0, n\in\za.
$$
\end{enumerate}
\end{lem}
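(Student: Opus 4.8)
The plan is to prove both assertions simultaneously by induction on $n$, carrying along the auxiliary fact that the family $\bk{\up{\la}{m}}_{\la>0,\, m\le n}$ is uniformly bounded and, under Assumption \ref{IVFDE_2}, equi-Lipschitz on $\ji\cup[0,T]$, with constants $K$ and $L$ that do \emph{not} depend on $m$ or $\la$. The entire difficulty lies in this $n$-independence: a naive application of the single-step estimate would produce a constant growing with $n$, so one must track precisely which data the Yosida estimate of \cite[Lemma 3.2]{Kreulichevo} depends on and check that each is controlled uniformly along the recursion.

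First I would settle claim (1) as a consequence of the inductive boundedness. For fixed $n$ the history segments $(\up{\la}{n-1})_t$, with $t\in[0,T]$ and $\la>0$, form a subset of $E$ that is bounded: on $[0,T]$ by the inductive uniform bound on $\up{\la}{n-1}$, and on $\ji$ by Remark \ref{vp-la-minus-vp}. Since $\vp(0)\in\hat{D}$, Remark \ref{A_t_vp_uniformly_bd} then produces a single constant $K$ with $\btr{A(t,\chi)\vp(0)}\le K$ for every $t\in[0,T]$ and every $\chi$ in that bounded family; in particular $\btr{A(t,(\up{\la}{n-1})_t)\vp(0)}\le K$ uniformly in $\la$ and $n$. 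Because the enclosing bounded set of histories is the same at every level (the uniform bound on $\up{\la}{m}$ being $n$-independent), $K$ is genuinely independent of $n$. For $n=1$ this is just Remark \ref{A_t_vp_uniformly_bd} applied to the fixed, bounded $\psi\in Y$.

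For claim (2) the base case $n=1$ is Lemma \ref{iteration-possible} a) applied to $\up{\la}{1}=\Tlp\psi$, together with the equi-Lipschitz property from Lemma \ref{iteration-possible} c). For the step $n\to n+1$ I would set $B(t):=A(t,(\up{\la}{n})_t)$ and repeat the reduction from the proof of Lemma \ref{iteration-possible}: the modified control inequality (\ref{modified_control}) shows that $t\mapsto B(t)$ satisfies the hypotheses of \cite[Lemma 3.2]{Kreulichevo} with initial datum $u_0=\vp(0)\in\hat{D}$, whence $\nrm{\up{\la}{n+1}(t)-\vp(0)}\le K(\la+t)$ and the equi-Lipschitz estimate with constant $L$. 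The point is that these bounds depend only on $\btr{B(0)\vp(0)}=\btr{A(0,(\up{\la}{n})_0)\vp(0)}$, on the fixed Lipschitz constants of $h^{\om},g,k$, and on the variation $\nrm{(\up{\la}{n})_{t_1}-(\up{\la}{n})_{t_2}}_E$ entering through the $K_0$ and $L_2$ terms. By claim (1) the first quantity is $\le K$; by the inductive equi-Lipschitz bound the last is $\le L\btr{t_1-t_2}$; and $L_2$ is monotone, hence bounded on the uniformly bounded range of the histories. Thus $K$ and $L$ may be taken identical at level $n+1$, closing the induction.

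The main obstacle is exactly this $n$-uniformity: one must confirm that no constant in \cite[Lemma 3.2]{Kreulichevo} conceals a dependence on the history beyond what the equi-Lipschitz and uniform-boundedness hypotheses already control. Once equi-Lipschitzness is propagated with a fixed $L$ under Assumption \ref{IVFDE_2}, the modulus of continuity of $t\mapsto B(t)$ is uniform in $n$ and the estimate is stable under the recursion. Under Assumption \ref{IVFDE_2_og} the term $\la\nrm{g(t_1)-g(t_2)}\nrm{y_2}$ is absent, so one propagates uniform equicontinuity (via Lemma \ref{iteration-possible} d)) in place of equi-Lipschitzness, and the same bootstrap yields claim (2) with an $n$-independent $K$.
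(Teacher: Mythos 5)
Your derivation of claim (1) matches the paper's proof exactly (uniform boundedness of the family of history segments plus Remark \ref{A_t_vp_uniformly_bd}), but your route to claim (2) has a genuine gap, and it is precisely the one you flag in your opening paragraph and then do not resolve. You close the induction by asserting that the constants produced by \cite[Lemma 3.2]{Kreulichevo} at level $n+1$ ``may be taken identical'' to those at level $n$ because they depend only on $\btr{A(0,(\up{\la}{n})_0)\vp(0)}$, the fixed control functions, and the history modulus. But in the Assumption \ref{IVFDE_2} case the control function for $B(t)=A(t,(\up{\la}{n})_t)$ contains $t\mapsto(\up{\la}{n})_t$ itself, so the Lipschitz constant $L_{n+1}$ delivered by that lemma is an increasing (affine, through the $K_0\la\nrE{(\up{\la}{n})_{t_1}-(\up{\la}{n})_{t_2}}$ term) function of $L_n$; nothing in the single-step estimate forces $L_{n+1}\le L_n$, and naive iteration lets the constants grow with $n$. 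Producing an $n$-uniform Lipschitz constant is exactly the content of the paper's later Lemma \ref{equi_Lipschitz}, whose proof needs a nontrivial mechanism to stop this growth (the recursion $S_{\la,n+1}\le C_6(1+\la S_{\la,n}+(1+\la)TS_{\la,n}+T^2S_{\la,n})$ is tamed by quasi-nilpotency of the integral operator via Proposition \ref{quasinilpotent-boundedness}) and which, crucially, \emph{uses} Lemma \ref{u-vp} as an ingredient. So your plan is circular with respect to the paper's architecture, and on its own terms the inductive step is unproven; the same structural objection applies to your equicontinuity variant under Assumption \ref{IVFDE_2_og}.

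The fix, which is the paper's actual proof, is to observe that claim (2) needs no Lipschitz or equicontinuity information at all and no induction over $n$. From the fixed-point equation one estimates
\begin{eqnarray*}
\nrm{\up{\la}{n}(t)-\vp(0)}
&\le& \frac{1}{\la(1-\la\om)}\int_0^t \nres{\tau}\nrm{\up{\la}{n}(t-\tau)-\vp(0)}d\tau \\
&& +\,\nrm{\Jlw(t,(\up{\la}{n-1})_t)\vp(0)-\vp(0)},
\end{eqnarray*}
where the first term uses only the $\frac{1}{1-\la\om}$-Lipschitz property of the resolvent together with the identity $\nres{t}\vp(0)+\frac{1}{\la}\int_0^t\nres{\tau}\vp(0)d\tau=\vp(0)$, and the second term is bounded pointwise in $t$ by $\la\fk{\btr{A(t,(\up{\la}{n-1})_t)\vp(0)}+\btr{\om}\nrm{\vp(0)}}\le\la K$ using claim (1) alone; no continuity of $t\mapsto(\up{\la}{n-1})_t$ enters anywhere. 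The Gronwall-type lemma \cite[Lemma A.8, p.~1096]{Kreulichevo} then yields $\nrm{\up{\la}{n}(t)-\vp(0)}\le K(\la+t)$ with the same $K$ for every $n$ and $\la$, because the integral inequality at level $n$ involves only $\up{\la}{n}$ itself and the $n$-independent constant from claim (1). This decoupling, with (1) feeding directly into a Gronwall estimate at each fixed $n$, is what makes the constants uniform, and it is also why Lemma \ref{u-vp} can legitimately precede Lemma \ref{equi_Lipschitz} rather than presuppose it.
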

\begin{proof}  
As $\bk{\up{\la}{n}}_{\la>0,n\in\za}$ is uniformly bounded and $\vp(0)\in \hat{D}$ by Remark \ref{A_t_vp_uniformly_bd} we find some $K>0,$ such that
$$
\btr{A(t,(\up{\la}{n})_t)\vp(0)}\le K \mbox{ for all } \la>0, n\in\za.
$$
In order to prove the second inequality,
\begin{eqnarray*}
\lefteqn{\nrm{\up{\la}{n}(t)-\vp(0)}}\\
&\le&\nrm{\Jlw(t,(\up{\la}{n-1})_t)
\fk{\nres{t}\vp(0)+\frac{1}{\la}\int_0^t\nres{\tau}\up{\la}{n}(t-\tau)d\tau}-\vp(0)} \\
&\le& \nrm{\Jlw(t,(\up{\la}{n-1})_t)\fk{\nres{t}\vp(0)+\frac{1}{\la}\int_0^t\nres{\tau}\up{\la}{n}(t-\tau)d\tau}-\Jlw(t,((\up{\la}{n-1})_t)\vp(0)} \\
&&+\nrm{\Jlw(t,(\up{\la}{n-1})_t)\vp(0)-\vp(0)} \\
&\le& \frac{1}{\la(1-\la\om)}\int_0^t \nres{\tau}\nrm{\up{\la}{n}(t-\tau)-\vp(0)}d\tau 
+\nrm{\Jlw(t,(\up{\la}{n-1})_t)(t)\vp(0)-\vp(0)}\\
&\le& \frac{1}{\la(1-\la\om)}\int_0^t \nres{\tau}\nrm{\up{\la}{n}(t-\tau)-\vp(0)}d\tau\\
&& +\la \fk{\btr{A(t,(\up{\la}{n-1})_t)(t)\vp(0)}+\btr{\om}\nrm{\vp(0)}}\\
&\le& \frac{1}{\la(1-\la\om)}\int_0^t \nres{\tau}\nrm{\up{\la}{n}(t-\tau)-\vp(0)}d\tau +\la K
\end{eqnarray*}
Hence, by \cite[Lemma A.8., p. 1096]{Kreulichevo} we have for some adequate $K>0$
\begin{eqnarray*}
\nrm{\up{\la}{n}(t)-\vp(0)} &\le& \la K + \int_0^t K d\tau = K(\la+t).
\end{eqnarray*}
\end{proof}

\begin{lem} \label{first-inequalities1}
Let for $\jz=[0,T]$ Assumption \ref{IVFDE_1} and Assumption \ref{IVFDE_2} hold. Further, let $\vp$ Lipschitz, $\vp(0)\in\hat{D},$ $t\in[0,T],$ and $t>s>0,$ then for some $K^{\prime}>0,$ 
\begin{eqnarray*}
\lefteqn{\sup_{0<\al\le t}\limsup_{s\to 0}\frac{1}{s}\nrE{(\up{\la}{n})_{\al-s}-(\up{\la}{n})_{\al}}} \\
&& \le \sup_{0\le\al\le t}\limsup_{s\to 0}\frac{1}{s}\nrm{\up{\la}{n}(\al-s)-\up{\la}{n}(\al)}+(L_{\vp}+K^{\prime}).
\end{eqnarray*}
\end{lem}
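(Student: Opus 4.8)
The plan is to unfold the $E$-norm of the history increment as a supremum over the delay variable and to match it, range by range, against the pointwise increment of $\up{\la}{n}$ on $\jz$ and against the Lipschitz constant of the initial history on $\ji$. Put $g:=\up{\la}{n}$. By Recursion \ref{recursion} the restriction $g_{|\ji}$ equals $\vp_{\up{\la}{n-1},\la}$; since $\vp$ is Lipschitz with constant $L_\vp$, $\vp(0)\in\hat D$, and the family $\{\up{\la}{n-1}\}_{\la>0,\,n\in\za}$ is bounded by Lemma \ref{u-vp}, Remark \ref{vp-la-minus-vp} (3) combined with Remark \ref{A_t_vp_uniformly_bd} shows that $\{g_{|\ji}\}_{\la>0,\,n\in\za}$ is equi-Lipschitz with a constant of the form $L_\vp+K'$, where $K'$ dominates $\sup\btr{A(0,\cdot)\vp(0)}$. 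This uniform history-Lipschitz constant is exactly the additive term on the right-hand side.

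First I would fix $\al\in(0,t]$ and $s>0$ small and write $\frac{1}{s}\nrE{g_{\al-s}-g_\al}=\sup_{\theta\in\ji}\frac{1}{s}\nrm{g(\al-s+\theta)-g(\al+\theta)}$, and set $\beta:=\al+\theta$, so that $\beta\le\al\le t$. Then I would split the range of $\beta$ into three parts. For $\beta\le0$ both arguments lie in $\ji$ and the history Lipschitz bound gives $\frac{1}{s}\nrm{g(\beta-s)-g(\beta)}\le L_\vp+K'$. For $\beta\ge s$ both arguments lie in $\jz$ and, as $\beta\le t$, the increment is dominated by $\sup_{0\le\al'\le t}\frac{1}{s}\nrm{g(\al'-s)-g(\al')}$. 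In the remaining straddling range $0<\beta<s$ I would split at the origin, $\nrm{g(\beta-s)-g(\beta)}\le\nrm{g(\beta-s)-g(0)}+\nrm{g(0)-g(\beta)}$, estimate the first summand by $(L_\vp+K')(s-\beta)$ through the history bound, and rewrite the second as $\frac{\beta}{s}\,\frac{1}{\beta}\nrm{g(\beta-\beta)-g(\beta)}$. Since $(s-\beta)+\beta=s$, this yields the convex combination $\frac{1}{s}\nrm{g(\beta-s)-g(\beta)}\le\frac{s-\beta}{s}(L_\vp+K')+\frac{\beta}{s}\,\frac{1}{\beta}\nrm{g(\beta-\beta)-g(\beta)}$, a weighted average of the history constant and a genuine $\jz$ backward difference quotient based at $\beta$.

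Applying $\sup_{\theta\in\ji}$ and then $\limsup_{s\to0}$ and finally $\sup_{\al\in(0,t]}$, the first two ranges are dominated by $L_\vp+K'$ and by $\sup_{0\le\al\le t}\limsup_{s\to0}\frac{1}{s}\nrm{g(\al-s)-g(\al)}$ respectively, and adding the two yields the claim. The step I expect to be the main obstacle is the straddling range: there $\beta$ is tied to $s$ through $\theta$ and runs over $(0,s)$, so as $s\to0$ one is pushed onto the diagonal $\al'=s'=\beta\to0^+$, and the supremum over $\theta$ must be interchanged with $\limsup_{s\to0}$. I would settle this using the equi-Lipschitz regularity of $g$ on $\jz$ from Lemma \ref{iteration-possible} (c) — available precisely because the lemma is stated under Assumption \ref{IVFDE_2} — together with the continuity of $g$ at the origin, so that $\limsup_{\beta\to0^+}\frac{1}{\beta}\nrm{g(0)-g(\beta)}$ is absorbed into $\sup_{0\le\al\le t}\limsup_{s\to0}\frac{1}{s}\nrm{g(\al-s)-g(\al)}$; no second copy of $L_\vp+K'$ then survives, the convex-combination bound collapses to the asserted sum, and the proof is complete.
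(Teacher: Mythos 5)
Your proposal is correct and follows essentially the same route as the paper: split the $E$-norm of the history increment into the part on $\ji$, controlled by the equi-Lipschitz constant $L_{\vp}+K^{\prime}$ of the modified histories from Remark \ref{vp-la-minus-vp} (with Remark \ref{A_t_vp_uniformly_bd}), and the part meeting $[0,t]$, controlled by the pointwise difference quotients, with the interchange of $\sup$ and $\limsup$ justified by the Lipschitz regularity from Lemma \ref{iteration-possible} via Lemma \ref{limsup_sup_compare}. The only difference is cosmetic: you treat the straddling range $0<\beta<s$ explicitly by a convex-combination argument and absorb the diagonal quotient using Proposition \ref{bd-limsup-Lipschitz}, whereas the paper subsumes that case directly into $\sup_{0<\al\le t}\nrm{\up{\la}{n}(\al-s)-\up{\la}{n}(\al)}$ and invokes Lemma \ref{limsup_sup_compare} once.
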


\begin{proof}
As a consequence of Remark \ref{vp-la-minus-vp} and the definition of $u_{\la,n},$ we obtain 
$$
\nrE{(\up{\la}{n})_{t-s}-(\up{\la}{n})_t} \le \sup_{0<\al\le t}\nrm{\up{\la}{n}(\al-s)-\up{\la}{n}(\al)}+s(L_{\vp}+K^{\prime})
$$
which leads by Lemma \ref{limsup_sup_compare} to,

\begin{eqnarray*} 
\lefteqn{\limsup_{s\to 0+}\frac{1}{s}\sup_{r\in \ji}\nrm{u_{\la,n}(r +\alpha-s)-u_{\la,n}(r+\alpha)}}  \\
&\le &  \limsup_{s\to 0+}\sup_{0\le\al\le t}\frac{1}{s}\nrm{u_{\la,n}(\alpha-s)-u_{\la,n}(\alpha)} + L_{\vp}+K^{\prime}  \\
&\le & \sup_{0<\al\le t} \limsup_{s\to 0+}\frac{1}{s}\nrm{u_{\la,n}(\alpha-s)-u_{\la,n}(\alpha)} + L_{\vp}+K^{\prime} .
\end{eqnarray*}
Computing the $\sup$ on the left hand side completes the proof.
\end{proof}

From Lemma \ref{iteration-possible} c) we obtain that if the starting point $\psi\in Y,$ in the recursion, and $\vp\in E$ are Lipschitz, and $\vp(0) \in \hat{D}$ then $u_{\la,n}$ is Lipschitz for every $\la>0,$ and $n\in\za_0$. In Lemma \ref{equi_Lipschitz} we will show that they are equi-Lipschitz.

\begin{lem} \label{equi_Lipschitz}
Let for $\jz=[0,T]$ Assumption \ref{IVFDE_1} and Assumption \ref{IVFDE_2} hold. If  the starting point of the recursion $\psi\in Y,$ and $\vp\in E$ are Lipschitz, and $\vp(0)\in \hat{D},$ then the set of functions $\bk{\up{\la}{n}:\la >0, n\in \za}$ is equi-Lipschitz. Hence the Yosida approximations
$$
(\partial_t)_{\la}\up{\la}{n}(t):=\frac{1}{\la}\fk{\up{\la}{n}(t)-\vp(0)-\frac{1}{\la}\int_0^t\nres{\tau}(\up{\la}{n}(t-\tau)-\vp(0))d\tau}
$$
are uniformly bounded for small $\la>0$ and $n\in \za.$
\end{lem}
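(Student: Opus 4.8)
The plan is to prove equi-Lipschitz continuity by a double induction (on $n$, and via the recursion defining $\up{\la}{n}$) and then deduce the uniform boundedness of the Yosida approximations as a formal consequence. The natural quantity to control is
$$
\Delta_n^{\la}(t):=\limsup_{s\to 0+}\frac{1}{s}\nrm{\up{\la}{n}(t-s)-\up{\la}{n}(t)},
$$
and the goal is to bound $\sup_{0\le t\le T}\Delta_n^{\la}(t)$ uniformly in $\la$ and $n$. I would begin by fixing the history $(\up{\la}{n-1})_t$ as the argument of the operator, so that $\up{\la}{n}$ solves the Yosida-approximate non-autonomous Cauchy problem with $B(t)=A(t,(\up{\la}{n-1})_t)$; here I expect to invoke the same machinery as in Lemma \ref{iteration-possible} c), which already gives Lipschitz continuity of each individual $\up{\la}{n}$ via \cite[Lemma 3.2, p. 1064]{Kreulichevo}. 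The point now is to make the Lipschitz constant \emph{uniform}.

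The key step is to set up the recursive estimate for $\Delta_n^{\la}$. Translating $\up{\la}{n}$ in time and using the fixed-point equation
$$
\up{\la}{n}(t)=\Jlw(t,(\up{\la}{n-1})_t)\fk{\nres{t}\vp(0)+\frac{1}{\la}\int_0^t\nres{\tau}\up{\la}{n}(t-\tau)d\tau},
$$
I would compare $\up{\la}{n}(t-s)$ with $\up{\la}{n}(t)$. The resolvent is Lipschitz with constant $\frac{1}{1-\la\om}$ (Proposition \ref{T-la-defined}), and the time-dependence of the operator is controlled through the control functions $h^{\om},g,k$ of Assumption \ref{IVFDE_2} together with the uniform bound $\btr{A(t,(\up{\la}{n-1})_t)\vp(0)}\le K$ from Lemma \ref{u-vp}(1). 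The $E$-norm differences of the history term are handled by Lemma \ref{first-inequalities1}, which converts the history Lipschitz seminorm into the pointwise one at the cost of the additive constant $L_{\vp}+K^{\prime}$. The integral term $\frac{1}{\la}\int_0^t\nres{\tau}\up{\la}{n}(t-\tau)d\tau$ contributes, after the time shift, an expression to which the integral Gronwall-type inequality \cite[Lemma A.8, p. 1096]{Kreulichevo} applies, exactly as in the proof of Lemma \ref{u-vp}(2). Assembling these pieces should yield an inequality of the shape
$$
\Delta_n^{\la}(t)\le \frac{1}{\la(1-\la\om)}\int_0^t\nres{\tau}\Delta_n^{\la}(t-\tau)\,d\tau + C\bfk{\sup_{0\le r\le t}\Delta_{n-1}^{\la}(r)+1},
$$
with $C$ independent of $\la$ and $n$, where the dependence on the previous iterate $\up{\la}{n-1}$ enters only through its (already bounded) Lipschitz seminorm.

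From such an inequality the resolution of the integral operator gives a uniform bound on $\Delta_n^{\la}$ in terms of $\Delta_{n-1}^{\la}$ plus a constant; because Theorem \ref{0-T-solution} (and the contraction with factor governed by $S_0$ quasi-nilpotent) forces the iterates $\up{\la}{n}$ to converge, the recursion in $n$ does not blow up, and one obtains $\sup_{n,\la}\sup_{0\le t\le T}\Delta_n^{\la}(t)<\infty$, i.e. equi-Lipschitz. The second assertion is then immediate: rewriting
$$
(\partial_t)_{\la}\up{\la}{n}(t)=\frac{1}{\la^2}\int_0^t\nres{\tau}\bfk{\up{\la}{n}(t)-\up{\la}{n}(t-\tau)}\,d\tau+\frac{1}{\la}\nres{t}\fk{\up{\la}{n}(t)-\vp(0)},
$$
the first term is bounded by the equi-Lipschitz constant times $\frac{1}{\la^2}\int_0^t\tau\nres{\tau}d\tau\le L$, and the second by $\frac{1}{\la}\nres{t}K(\la+t)$ using Lemma \ref{u-vp}(2), both uniformly in $n$ and for small $\la$.

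I expect the main obstacle to be establishing the uniformity in $n$: each single step only gives a Lipschitz constant that a priori could grow with the iteration, and the recursive estimate above feeds $\Delta_{n-1}^{\la}$ into $\Delta_n^{\la}$ with a multiplicative constant $C$. The delicate point is to verify that $C$ can be taken strictly compatible with convergence (so that the $\sup_n$ stays finite), which is why the additive constant must be controlled independently of $n$ via Lemma \ref{u-vp}(1) and the \emph{a priori} uniform boundedness of $\bk{\up{\la}{n}}$, rather than re-estimated at each stage. Handling the history seminorm uniformly through Lemma \ref{first-inequalities1} is the technical crux that keeps the additive term bounded.
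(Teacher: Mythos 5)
Your skeleton matches the paper's proof up to the decisive point: you control the difference quotients $K_{\la,n}(t)=\limsup_{s\to 0+}\frac{1}{s}\nrm{\up{\la}{n}(t-s)-\up{\la}{n}(t)}$ through the fixed-point equation, the $\frac{1}{1-\la\om}$-Lipschitz resolvent, Lemma \ref{first-inequalities1}, Lemma \ref{u-vp}, and the Gronwall-type Proposition \ref{s-t-integral-inequality}, and your final deduction of the uniform bound on the Yosida approximations from equi-Lipschitzness is exactly the paper's. The gap is the uniformity in $n$. Your recursive inequality flattens the dependence on the previous iterate into a plain multiplicative constant, $\Delta_n^{\la}(t)\le\dots+C\bigl(\sup_{0\le r\le t}\Delta_{n-1}^{\la}(r)+1\bigr)$, and you propose to rule out blow-up by appealing to the convergence of the iterates furnished by Theorem \ref{0-T-solution}. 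That step fails: Theorem \ref{0-T-solution} and the contraction estimates of Corollary \ref{independence} control the iterates only in sup norm, and uniform convergence of $\up{\la}{n}$ carries no information whatsoever about their Lipschitz seminorms. On an interval of fixed length $T$ the flattened constant $C$ is in general $\ge 1$, and the recursion $\Delta_n\le C(\Delta_{n-1}+1)$ then grows like $C^n$; nothing in the convergence of the functions prevents this.

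The mechanism the paper actually uses, and which your proposal lacks, is to \emph{retain the Volterra structure} of the step $n\to n+1$. After two applications of Proposition \ref{s-t-integral-inequality} (the second absorbing the self-referencing term $\la L_g K_{\la,n}^{\sup}(t)$ coming from the $g$-control on $\nrm{y_2}$, a term your displayed inequality omits) one arrives at
$$
S_{\la,n+1}(t)\le C_6\fk{1+\la S_{\la,n}(t)+(1+\la)\int_0^t S_{\la,n}(\mu)\,d\mu+\int_0^t\int_0^{\nu}S_{\la,n}(\mu)\,d\mu\,d\nu},
$$
that is, $f_{n+1}^{\la}\le g+(\la C_6 I+\alpha T+\beta T^2)f_n^{\la}$ with $T$ the integration operator on $C[0,T]$. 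Since $T$ is quasi-nilpotent, the Spectral Mapping Theorem gives $\sigma(\alpha T+\beta T^2)=\bk{0}$, so for $\la C_6\le q<1$ the affine recursion has spectral radius $\la C_6<1$ and hence bounded orbits uniformly in $n$ and small $\la$ --- this is precisely Proposition \ref{quasinilpotent-boundedness}(3), and it is the sole source of the $\sup_{n,\la}$ bound. Replacing $\int_0^t S_{\la,n-1}(\mu)\,d\mu$ by $T_{\mathrm{end}}\cdot\sup S_{\la,n-1}$, as your inequality implicitly does, destroys exactly the structural feature that makes the induction in $n$ close. To repair your argument you would either have to keep the integral-operator form and invoke this quasi-nilpotency argument, or restart on subintervals short enough that the flattened constant is below $1$ --- a route with its own complications for the history space, and not the one you proposed.
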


\begin{proof}
By Assumption \ref{IVFDE_2} and Recursion \ref{recursion} we have,
\begin{eqnarray*} 
\lefteqn{\nrm{u_{\la,1}(t-s)-u_{\la,1}(t)}} \\
&\le& \frac{1}{1-\la\om}\nrm{u_{\la,1}(t-s)-u_{\la,1}(t)-\la\bk{(\partial_t)_{\la}u_{\la,1}(t-s)-(\partial_t)_{\la}u_{\la,1}(t)}} \\
&&+\frac{\la}{1-\la\om} \nrm{h^{\om}(t-s)-h^{\om}(t)}L^{\om}_1(\nrm{u_{\la,1}(t)}) \\
&&+\frac{\la}{1-\la\om}\nrm{g(t-s)-g(t)} \nrm{\frac{1}{\la}\fk{u_{\la,1}(t)-\vp(0)-\int_0^t\nresl{\tau}(u_{\la,1}(t-\tau)-u_0)d\tau}}\\
&& +\frac{\la}{1-\la\om} \nrm{k(t-s)-k(t)}L_2(\nrE{\psi_t}) \\
&&+\frac{K_0}{1-\la\om}\nrE{\psi_{t-s}-\psi_t} \\
&\le& \frac{1}{1-\la\om} 
\nrm{\nres{t-s}\vp(0)+\int_0^{t-s}\nresl{\tau}u_{\la,1}(t-s-\tau)d\tau
   - \nres{t}\vp(0)-  \int_0^{t}  \nresl{\tau}u_{\la,1}(t-\tau)d\tau}  \\
&&+\frac{\la}{1-\la\om} \nrm{h^{\om}(t-s)-h^{\om}(t)}L^{\om}_1(\nrm{u_{\la,1}(t)}) \\
&&+\frac{\la}{1-\la\om}\nrm{g(t-s)-g(t)} \nrm{\frac{1}{\la}\fk{u_{\la,1}(t)-\vp(0)-\int_0^t\nresl{\tau}(u_{\la,1}(t-\tau)-\vp(0))d\tau}},\\
&& +\frac{\la}{1-\la\om} \nrm{k(t-s)-k(t)}L_2(\nrE{\psi_t}) \\
&&+\frac{K_0}{1-\la\om}\nrE{\psi_{t-s}-\psi_t} \\
&\le&\frac{1}{\la(1-\la\om)}\int_0^{t-s}\nres{\tau}\nrm{u_{\la,1}(t-s-\tau)-u_{\la,1}(t-\tau)}d\tau \\
&&+\frac{1}{\la (1-\la\om)}\int_{t-s}^t\nres{\tau}\nrm{u_{\la,1}(t-\tau)-\vp(0)}d\tau  \\
&&+\frac{\la}{1-\la\om} \nrm{h^{\om}(t-s)-h^{\om}(t)}L^{\om}_1(\nrm{u_{\la,1}(t)})\\
&&+\frac{\la}{1-\la\om}\nrm{g(t-s)-g(t)} \nrm{\frac{1}{\la}\fk{u_{\la,1}(t)-\vp(0)-\int_0^t\nresl{\tau}(u_{\la,1}(t-\tau)-\vp(0))d\tau}}\\
&& +\frac{\la}{1-\la\om} \nrm{k(t-s)-k(t)}L_2(\nrE{\psi_t}) \\
&&+\frac{K_0}{1-\la\om}\nrE{\psi_{t-s}-\psi_t}. 
\end{eqnarray*}

Defining for $s>0,$ and $n\in\za_0,$
\begin{eqnarray*}
K_{\la,n}(t)&:=&\limsup_{s\to 0+}\frac{1}{s}\nrm{u_{\la,n}(t-s)-u_{\la,n}(t)} \\ K_{\la,n}^{\sup}(t)&:=& \sup_{0\le\alpha\le t}K_{\la,n}(\alpha) \\
S_{\la,n}(t)&:=&\sup_{0\le\al\le t}\limsup_{s\to 0+}\frac{1}{s}\nrE{(u_{\la,n})_{\alpha-s}-(u_{\la,n})_{\alpha}}. \\
\end{eqnarray*}
Note that by Lemma \ref{first-inequalities1}  we have,
\begin{eqnarray}\label{S_la-K_la-compare}
S_{\la,n}(t)&\le& K_{\la,n}^{\sup}(t) + L_{\vp}+K^{\prime}.
\end{eqnarray}

With the use of the notations above we estimate the Yosida approximation of the  derivative,
\begin{eqnarray*}
\frac{1}{\la}\nrm{u_{\la,1}(t)-u_0-\frac{1}{\la}\int_0^t\nres{\tau}(u_{\la,1}(t-\tau)-u_0)d\tau}
&\le& \frac{1}{\la^2}\int_0^t\nres{\tau}\nrm{u_{\la,1}(t-\tau)-u_{\la,1}(t)}d\tau \\
&&+ \frac{1}{\la}\nres{t}\nrm{u_{\la,1}(t)-u_0}.
\end{eqnarray*}
The second term is quite simple to estimate, as

\begin{eqnarray}
\frac{1}{\la}\nres{t}\nrm{u_{\la,1}(t)-u_0}
&\le& \frac{1}{\la}\nres{t}K^{\prime}(\la+t ) ,\\
&\le&  K^{\prime}\nres{t} +K^{\prime}\frac{t}{\la}\nres{t}, \\
&\le& K_1.
\end{eqnarray}
Applying Proposition \ref{bd-limsup-Lipschitz} for given $t>0,$ we have
$$
\sup_{0<s<t}\frac{1}{s}\nrm{u_{\la,1}(t-s)-u_{\la,1}(t)} \le K_{\la,1}^{\sup}(t).
$$

Hence, for the integral we obtain
\begin{equation}
\frac{1}{\la^2}\int_0^t\nres{\tau}\frac{\tau}{\tau}\nrm{u_{\la,1}(t-\tau)-u_{\la,1}(t)}d\tau \le \frac{1}{\la^2}\int_0^{t}\nres{\tau}\tau d\tau K_{\la,1}^{\sup}(t) \le  K^{\sup}_{\la,1}(t), 
\end{equation}

and conclude
\begin{eqnarray*}
\frac{1}{\la}\nrm{u_{\la,1}(t)-u_0-\frac{1}{\la}\int_0^t\nres{\tau}(u_{\la,1}(t-\tau)-u_0)d\tau}
&\le& K^{\sup}_{\la,1}(t)+K_1.
\end{eqnarray*}

Applying Lemma \ref{u-vp}
$$
\nrm{\up{\la}{n}(t)-\vp(0)}\le K(\la+t), \mbox{ for all } \la>0, n\in\za_0,
$$
we have 
\begin{eqnarray*}
\lefteqn{\frac{1}{s\la}\int_{t-s}^t\nres{\tau}\nrm{u_{1,\la}(t-\tau)-\vp(0)}d\tau} \\
&\le& \frac{K}{\la s} \int_{t-s}^t\nres{\tau}(\la+(t-\tau))d\tau =  \frac{K}{\la s} \int_0^s\nres{t-\tau}(\la+\tau)d\tau \\
&=& K\frac{\nres{t}}{s\la}\fk{\la\int_0^se^{\frac{\tau}{\la}}d\tau
+\la se^{\frac{s}{\la}} -\la\int_0^se^{\frac{\tau}{\la}}d\tau} =K\nres{t-s}.
\end{eqnarray*}

The boundedness of $\bk{u_{n,\la}}_{\la>0,n\in \za_0}$ leads to a $C_u, $ such that $L_1(\nrm{u_{n,\la}(t)},L_2(\nrE{(u_{n,\la})_t})\le C_u$ for all $0\le t\le T$, $ n\in\za_0, $ and $ \la >0.$ Moreover , let $\la\om \le q < 1$ and choose $C_u$ such that $\frac{K_0}{1-\la\om}\le C_u, $ witch implies for the inequality,
\begin{eqnarray*}
\lefteqn{(1-\la\om)\frac{1}{s}\nrm{u_{1,\la}(t-s)-u_{1,\la}(t)}} \\
&\le& \int_0^{t-s}e^{-\frac{t}{\la}}\frac{1}{s}\nrm{u_{1,\la}(t-s-\tau)-u_{1,\la}(t-\tau)}d\tau + K\exp\fk{\frac{s-t}{\la}} \\
&& + \la C_u \fk{L_{h^{\om}}+L_k+S_{\la,0}(t)}  + \la L_g K_1 +\la L_g K_{\la,1}^{\sup}(t).
\end{eqnarray*}

If passing to $\limsup_{s\to 0}$ on both sides of the inequality, we have with some adequate $C_2,$
\begin{eqnarray*}
(1-\la\om)K_{\la,1}(t) &\le& \frac{1}{\la}\int_0^{t}e^{-\frac{t}{\la}}K_{ \la,1}(t-\tau)d\tau + \la\fk{C_2+ C_uS_{\la,0}(t)} \\
&& +\la L_g K_{\la,1}^{\sup}(t) + K\exp\fk{\frac{-t}{\la}}.
\end{eqnarray*}
Hence we are in the situation to apply Proposition \ref{s-t-integral-inequality}, and obtain
\begin{eqnarray*}
K_{\la,1}(t)
 &\le& \la L_g K_{\la,1}^{\sup}(t) + K\exp\fk{\frac{-t}{\la}} + \la\fk{C_2+ C_uS_{\la,0}(t)}\\
 &&+\frac{1}{\la(1-\la\om)} \int_0^t\exp\fk{\frac{\om(t-\tau)}{1-\la\om}}
 \fk{ K\exp\fk{\frac{-\tau}{\la}} + \la C_2}d\tau \\
&&+\frac{1}{\la(1-\la\om)} \int_0^t\exp\fk{\frac{\om(t-\tau)}{1-\la\om}}
\la\fk{ L_g K_{\la,1}^{\sup}(\tau) +  C_uS_{\la,0}(\tau)}d\tau
\end{eqnarray*}
Similar to \cite[p. 1067]{Kreulichevo} we have,
$$
\frac{1}{\la(1-\la\om)} \int_0^t\exp\fk{\frac{\om(t-\tau)}{1-\la\om}}K\exp\fk{\frac{-\tau}{\la}}d\tau \le C_3.
$$
Thies yields for some adequate $C_4$ we conclude
\begin{eqnarray*}
\lefteqn{K_{\la,1}(t) \le \frac{1}{1-\la\om}\fk{C_4 +\la L_g K_{\la,1}^{\sup}(t)+\la C_uS_{\la,0}(t)}} \\
&&+\frac{C_u}{(1-\la\om)} \int_0^t\exp\fk{\frac{\om(t-\tau)}{1-\la\om}}
S_{\la,0}(\tau) d\tau +\frac{L_g}{(1-\la\om)} \int_0^t\exp\fk{\frac{\om(t-\tau)}{1-\la\om}} K_{\la,1}^{\sup}(\tau) d\tau
\end{eqnarray*} 
As the right hand side is monotone increasing we find,
\begin{eqnarray*}
\lefteqn{K_{\la,1}^{\sup}(t) \le \frac{1}{1-\la\om}\fk{C_4+ \la L_g K_{\la,1}^{\sup}(t)+\la C_uS_{\la,0}(t)} } \\
&&+\frac{C_u}{(1-\la\om)} \int_0^t\exp\fk{\frac{\om(t-\tau)}{1-\la\om}}
S_{\la,0}(\tau) d\tau+\frac{L_g}{(1-\la\om)} \int_0^t\exp\fk{\frac{\om(t-\tau)}{1-\la\om}} K_{\la,1}^{\sup}(\tau) d\tau.
\end{eqnarray*}
A second application of Proposition \ref{s-t-integral-inequality} leads with
$$
\gamma_{\la}=\frac{\om+L_g-\la \om (\om+2L_g)}{(1-\la\om)(1-\la(\om+L_g))},
$$
to
\begin{eqnarray*}
\lefteqn{K_{\la,1}^{\sup}(t)
 \le \frac{1}{1-\la(\om+L_g)}\fk{C_4 +\la C_u S_{\la,0}(t) +C_u \int_0^t\exp\fk{\frac{\om(t-\tau)}{1-\la\om}} S_{\la,0}(\tau) d\tau } }\\
&&+  \fk{\frac{L_g}{1-\la(\om+L_g)}}^2\int_0^t\exp(\gamma_{\la}(t-\tau)
\fk{C_4 +\la C_u S_{\la,0}(\tau)} d\tau \\
&&+\fk{\frac{L_g}{1-\la(\om+L_g)}}^2\int_0^t\exp(\gamma_{\la}(t-\nu))
 \int_0^\nu\exp\fk{\frac{\om(\nu-\mu)}{1-\la\om}} C_u S_{\la,0}(\mu) d\mu d\nu.
\end{eqnarray*} 
As we are on a bounded interval $0\le t\le T$ for some adequate constant $ C_5,$ we obtain

\begin{eqnarray*}
K_{\la,1}^{\sup}(t)
&\le & C_5 \fk{ 1+ \la S_{\la,0}(t)} + C_5(1+\la) \int_0^t S_{\la,0}(\mu) d\mu + C_5 \int_0^t \int_0^{\nu} S_{\la,0}(\mu) d\mu d\nu.
\end{eqnarray*} 
Using the boundedness of $\bk{\vp_{u_{n,\la},\la}}_{n\in\za, \la>0},$ compare Remark \ref{vp-la-minus-vp}, the monotonicity of the right hand side, (\ref{S_la-K_la-compare}) and defining the integral operator
$$
\Funk{T}{C[0,T]}{C[0,T]}{f}{\bk{t\mapsto \int_0^t f(\tau)d\tau},}
$$
the integral inequality becomes with some adequate $C_6$
\begin{eqnarray*}
S_{\la,1}(t)&\le & C_6\fk{1+ \la S_{\la,0}(t) + (1+\la)TS_{\la,0}(t)+ T^2S_{\la,0}(t)}.
\end{eqnarray*}

As $\psi$  is arbitrary in $Y$ and Lipschitz and the inequalities neither depend on $\la>0$ nor on $n\in\za,$  we can do the induction step with the same methods. Hence,
\begin{eqnarray*}
S_{\la,n+1}(t)&\le & C_6\fk{1+ \la S_{\la,n}(t) + (1+\la)TS_{\la,n}(t)+ T^2S_{\la,n}(t)}.
\end{eqnarray*}
Due to $ T$ quasi-nilpotent by the Spectral Mapping Theorem \cite[Thm. 10.28]{RudinFA}  $\sigma(\alpha T+\beta T^2)=\bk{0},$ for all $\alpha,\beta\in\re.$ Hence for $ \la C_6 \le q < 1  $ we are in the situation of Proposition \ref{quasinilpotent-boundedness}, and obtain the uniform bound for the Lipschitz constants of the family $\bk{u_{\la,n}}_{n\in\za,\la<q/C_6}.$

\end{proof}

In order to compare the Cauchy problem coming with $B(t)=A(t,u_t)$ we recall the Assumptions in the case of a non-autonomous Cauchy problem discussed in the study \cite{Kreulichevo}.

\begin{assu}\label{general-IVA0} 
  The set $\bk{B(t):t \in \jz}$ is a family of m-dissipative operators
\end{assu}
\begin{assu}\label{general-IVE1}
  There exist  $h\in BUC(\jz,X),  $ and $L_1: \rep \lra \rep$, continuous and monotone non-decreasing,
such that for $\la >0,$ and $t_1,t_2\in \jz$ we have
\begin{eqnarray*}
\lefteqn{\nrm{x_1-x_2}} \\
&\le& \nrm{x_1-x_2 -\la(y_1-y_2)}+\la\nrm{h(t_1)-h(t_2)}L_1(\nrm{x_2}),
\end{eqnarray*}
for all $[x_i,y_i]\in B(t_i),$  $i=1,2.$
\end{assu}
\begin{assu}\label{general-IVE2}
There are bounded and Lipschitz continuous functions $g,h:\jz \to X, $ 
and $L_1: \rep \lra \rep$ continuous, and monotone non-decreasing, 
such that for $\la >0,$ and $t_1,t_2\in \jz,$ we have
\begin{eqnarray*}
\lefteqn{\nrm{x_1-x_2}} \\
&\le& \nrm{x_1-x_2 -\la(y_1-y_2)}+\la \nrm{h(t_1)-h(t_2)}L_1(\nrm{x_2}) \\
&&+ \la \nrm{g(t_1)-g(t_2)}\nrm{y_2},
\end{eqnarray*}
for all $[x_i,y_i]\in B(t_i),$  $i=1,2.$
\end{assu}

\begin{theo} \label{general_mild_solution_lipschitz}
Let for $\jz=[0,T]$ Assumption \ref{IVFDE_1} and Assumption \ref{IVFDE_2} hold. Further, let  $\vp\in E$ Lipschitz, $\vp(0)\in \hat{D},$ $u$ the solution to 
(\ref{gFDEonInterval}), $ B(t)=A(t,u_t),$ and $f=0,$  then the solution $u\in Y$ found in Theorem \ref{0-T-solution} is a mild solution to
(\ref{Cauchy-equation}).

\end{theo}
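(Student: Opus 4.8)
The plan is to produce, directly from the solution operator $\Tlp$, a family of Yosida approximants for the frozen-history Cauchy problem and to identify its limit with $u$. Since $u$ is the solution of Theorem \ref{0-T-solution} and, by Lemma \ref{equi_Lipschitz}, the iterates $u_{n,\la}$ are equi-Lipschitz for Lipschitz data, the limit $u$ is itself Lipschitz; hence $t\mapsto u_t$ is a Lipschitz curve in $E$, so that $B(t):=A(t,u_t)$ is a well-defined non-autonomous family. Feeding $\vp_i=u_{t_i}$ into Assumption \ref{IVFDE_2} and using the boundedness of $u$ together with the Lipschitz continuity of $t\mapsto u_t$, the $k$- and $K_0$-terms become Lipschitz in $t$; absorbing them into an augmented Lipschitz function shows that $B$ satisfies Assumption \ref{general-IVA0} and Assumption \ref{general-IVE2}, so that the notion of a mild solution and the scheme of \cite{Kreulichevo} genuinely apply.

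First I would set $\tilde{u}_{\la}:=\Tlp u$, which is admissible because $u\in Y$ is Lipschitz and $\vp$ is Lipschitz with $\vp(0)\in\hat{D}$. By Lemma \ref{ul-fixpoint}, equation (\ref{psi-la-approx-gFDE}) with $\psi=u$, the restriction of $\tilde{u}_{\la}$ to $\jz$ satisfies
$$
(\partial_t)_{\la}\tilde{u}_{\la}(t)\in A(t,u_t)\tilde{u}_{\la}(t)+\om\tilde{u}_{\la}(t)=B(t)\tilde{u}_{\la}(t)+\om\tilde{u}_{\la}(t),
$$
which is exactly the Yosida relation required of a mild solution to (\ref{Cauchy-equation}) with $f=0$. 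Moreover $\tilde{u}_{\la}(0)=\vp_{u,\la}(0)=J_{\la}^{\om}(0,u_0)\vp(0)$, which by Assumption \ref{IVFDE_1} tends to $\vp(0)=x_0$ as $\la\to 0$. Thus two of the three defining properties of a mild solution hold at once, and it remains only to prove $\tilde{u}_{\la}\to u$ uniformly on $[0,T]$.

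For this convergence I would fix $n$ and decompose, using $u_{n,\la}=\Tlp u_{n-1,\la}$,
$$
\nrm{\tilde{u}_{\la}-u}_Y\le \nrm{\Tlp u-\Tlp u_{n-1,\la}}_Y+\nrm{u_{n,\la}-u_n}_Y+\nrm{u_n-u}_Y.
$$
The last term tends to $0$ as $n\to\infty$ by Theorem \ref{0-T-solution}, and the middle term tends to $0$ as $\la\to 0$ for fixed $n$ by Lemma \ref{iteration_convergent}. The first term is handled by inequality (\ref{iterate_ineq_2}) of Lemma \ref{ul-fixpoint} (taking $\vp_1=\vp_2=\vp$, $\psi=u$, $\phi=u_{n-1,\la}$): since its right-hand side involves only $\psi-\phi$, it bounds the term directly by $C\nrm{u-u_{n-1,\la}}_Y$, where $C=\frac{\la K_0}{1-\la\om}+\frac{K_0}{1-\la\om}\int_0^T\exp\fk{\frac{\om}{1-\la\om}\tau}d\tau$ is uniformly bounded for small $\la$. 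Writing $\nrm{u-u_{n-1,\la}}_Y\le\nrm{u-u_{n-1}}_Y+\nrm{u_{n-1}-u_{n-1,\la}}_Y$, a diagonal choice—fix $n$ large to control the $n$-indexed terms, then send $\la\to 0$—yields $\tilde{u}_{\la}\to u$ on $\jz$; on $\ji$ convergence is Remark \ref{vp-la-minus-vp}.

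The main obstacle is precisely this first term: one must replace the frozen history $u_t$ inside the operator by the iterated histories $(u_{n-1,\la})_t$ while retaining quantitative control. Inequality (\ref{iterate_ineq_2}) supplies exactly that control, so the difficulty is reduced to organising the order of the limits $\la\to 0$ and $n\to\infty$ through the diagonal argument, rather than to establishing any genuinely new estimate.
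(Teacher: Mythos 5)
Your proposal is correct, and it rests on the same two pillars as the paper's own proof: the Lipschitz continuity of $t\mapsto u_t$ obtained from Lemma \ref{equi_Lipschitz} (so that $B(t)=A(t,u_t)$ satisfies Assumption \ref{general-IVE2} with the augmented control functions $\tilde h$ and $L_B$), and the stability estimate (\ref{iterate_ineq_2}) in the history variable. Where you diverge is in the final assembly. The paper takes the frozen-history Yosida scheme $w_\la$ with $w_\la(0)=\vp(0)$, invokes \cite[Theorem 2.9]{Kreulichevo} to know that $w_\la$ converges (to the integral solution of (\ref{Cauchy-equation})), and then identifies the limit with $u$ through Lemma \ref{induction_step_1}; since Lemma \ref{induction_step_1} is itself proved by exactly inequality (\ref{iterate_ineq_2}), your direct use of that inequality is the same mechanism made explicit. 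Your version dispenses with the external citation altogether: taking $\tilde u_\la=\Tlp u$ gives the Yosida relation for $B(t)$ and $\tilde u_\la(0)=J^{\om}_{\la}(0,\vp)\vp(0)\to\vp(0)$ directly from Lemma \ref{ul-fixpoint} (the definition of a mild solution only requires $u_\la(0)\to x_0$, so the discrepancy with the paper's exact initial value $\vp(0)$ is harmless), and the convergence $\tilde u_\la\to u$ follows from Theorem \ref{0-T-solution} and Lemma \ref{iteration_convergent} via your diagonal argument over $(n,\la)$ --- note your constant $C$ is uniformly bounded for small $\la$ only because $\jz=[0,T]$ is bounded, which is precisely the setting here, and your handling of the sup over $I(t)$ is legitimate because $(u_n)_{|\ji}=\vp$ while $\bk{u_{n,\la}}$ converges in $Y$, i.e.\ uniformly on $\ji\cup\jz$. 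What the paper's detour through \cite[Theorem 2.9]{Kreulichevo} buys is the additional identification of $u$ with the limit of the standard non-autonomous scheme, hence with the integral solution of (\ref{Cauchy-equation}); this extra information is what is exploited later in Theorem \ref{general_integral-solution-lipschitz}. Your argument proves the mild-solution statement as such more economically and self-containedly, but would need the paper's route (or a separate appeal to \cite[Theorem 2.9]{Kreulichevo}) to recover the integral-solution refinement.
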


\begin{proof}
From Lemma \ref{equi_Lipschitz} we learn that $u_t$ is the uniform limit of the equi-Lipschitz family $\bk{u_{\la,n}}_{n\in\za_0,\la >0},$ consequently
$$\nrE{u_t-u_s}\le L\btr{t-s}$$ Thus, we obtain with the modified $h$
\begin{equation} \label{ut-modified-control-1}
\Funk{\tilde{h}}{\jz}{(X\times X\times E, \nrm{\cdot}_1)}{t}{(h^{\om}(t),k(t),u_t),}
\end{equation}
the modified $L,$
\begin{equation} \label{ut-modified-control-2}
L_B(t)=L_1(t)+L_2(\sup_{s\in[0,T]}\nrE{u_s})+\frac{K_0}{1-\la\om},
\end{equation}
and by Assumption \ref{IVFDE_2} that $B(t)=A(t,u_t)$ satisfies Assumption \ref{general-IVE2} with the previously defined control functions. 

Thus, by \cite[Theorem 2.9]{Kreulichevo} the approximation
\begin{equation} \label{approx-Cauchy-equation}
\left\{
\begin{array}{rcll} 
\fk{\partial_t}_{\la} w_{\la}(t) &\in & B(t)w_{\la}(t)+\om w_{\la}(t)&:t\in\jz\\
w(0) &=& \vp(0).
\end{array}
\right.
\end{equation}
tends to the integral solution of (\ref{Cauchy-equation}). On the other hand we have the  approximation $u_{\la,n}$ of the generalized FDE given by Recursion \ref{recursion}, with 
$$
\lim_{n\to\infty}\lim_{\la\to 0}u_{\la,n}=u.
$$
 Thus we are in the situation of Lemma  \ref{induction_step_1}, which concludes the proof.

\end{proof}

\begin{theo} \label{general_mild_solution_cont}
Let for $\jz=[0,T]$ Assumption \ref{IVFDE_1} and Assumption \ref{IVFDE_2_og} hold. Further, let  $\vp\in E$ Lipschitz, $\vp(0)\in \hat{D},$ $u$ the solution to 
(\ref{gFDEonInterval}) and $ B(t)=A(t,u_t),$  then the solution $u\in Y$ found in Theorem \ref{0-T-solution} is a mild solution to
(\ref{Cauchy-equation})
\end{theo}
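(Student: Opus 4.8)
The plan is to run the argument of Theorem \ref{general_mild_solution_lipschitz} almost verbatim, but to supply the regularity of the coefficient family from the uniform continuity that every element of $Y$ carries, rather than from an equi-Lipschitz bound. First I would record that the solution $u$ of Theorem \ref{0-T-solution} belongs to $Y$ and is therefore uniformly continuous on $\ji\cup\jz$ and bounded, so that $M:=\sup_{t\in\jz}\nrE{u_t}<\infty$. Since for $t,s\in\jz$ one has $\nrE{u_t-u_s}=\sup_{\theta\in\ji}\nrm{u(t+\theta)-u(s+\theta)}$, the modulus of continuity of $u$ dominates $\nrE{u_t-u_s}$; hence the history map $t\mapsto u_t$ is uniformly continuous from $\jz$ into $E$.

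Next I would verify that $B(t):=A(t,u_t)$ satisfies the non-autonomous Assumption \ref{general-IVE1}. Inserting $\vp_i=u_{t_i}$ into Assumption \ref{IVFDE_2_og} and using that $L_2$ is monotone non-decreasing to replace $L_2(\nrm{u_{t_2}})$ by the constant $L_2(M)$, the three excess terms $\la\nrm{h(t_1)-h(t_2)}L_1(\nrm{x_2})$, $\la\nrm{k(t_1)-k(t_2)}L_2(M)$ and $K_0\la\nrE{u_{t_1}-u_{t_2}}$ are all non-negative and can be collected into a single term $\la\nrm{\tilde h(t_1)-\tilde h(t_2)}L_B(\nrm{x_2})$, with
$$
\Funk{\tilde h}{\jz}{(X\times X\times E,\nrm{\cdot}_1)}{t}{(h(t),k(t),u_t)}
$$
and $L_B(\nrm{x}):=L_1(\nrm{x})+L_2(M)+K_0$, exactly as in (\ref{ut-modified-control-1})--(\ref{ut-modified-control-2}) but with the $g$-term now deleted. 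The decisive point, which here replaces Lemma \ref{equi_Lipschitz}, is that $\tilde h$ is bounded and uniformly continuous: $h,k$ are so by Assumption \ref{IVFDE_2_og}, and $t\mapsto u_t$ is so by the previous paragraph. Thus $B$ meets Assumption \ref{general-IVE1}, the regularity demanded by the non-autonomous theory in the absence of a $g$-term.

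With this in hand I would invoke \cite[Theorem 2.9, p. 1058]{Kreulichevo}: the Yosida approximation (\ref{approx-Cauchy-equation}) of the Cauchy problem (\ref{Cauchy-equation}) with $f=0$, $x_0=\vp(0)$ and this fixed $B(t)=A(t,u_t)$ converges, as $\la\to 0$, to the integral solution of (\ref{Cauchy-equation}). It then remains to identify that limit with $u$. For this I would compare the fixed-coefficient approximation against the recursion approximants $u_{\la,n}$, whose histories $(u_{\la,n-1})_t$ converge to $u_t$; since $\lim_{n\to\infty}\lim_{\la\to 0}u_{\la,n}=u$ by Theorem \ref{0-T-solution}, the stability estimate of Lemma \ref{induction_step_1}, applied with the varying histories converging to $u$, forces the two approximating families to share a common limit. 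Hence the integral solution coincides with $u$, so $u$ is a mild solution of (\ref{Cauchy-equation}).

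I expect the only genuinely delicate points to be two. First, the absorption in the second paragraph must produce a modulus $L_B$ and a control $\tilde h$ that are independent of $\la$ and of the pair $t_1,t_2$; this is where the boundedness $M=\sup_{t}\nrE{u_t}<\infty$ and the uniform continuity of $t\mapsto u_t$ are both essential, and it is exactly the step that in the Lipschitz setting had to be paid for by the laborious equi-Lipschitz estimate of Lemma \ref{equi_Lipschitz}. Second, the identification of the double limit $\lim_{n}\lim_{\la}$ of the recursion with the single limit $\lim_{\la}$ of the fixed-coefficient Cauchy approximation must be executed cleanly through Lemma \ref{induction_step_1}; everything else is a routine transcription of the proof of Theorem \ref{general_mild_solution_lipschitz}.
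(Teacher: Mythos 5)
Your proposal is correct and takes essentially the same route as the paper: the paper's proof likewise trades the equi-Lipschitz machinery of Lemma \ref{equi_Lipschitz} for the continuity of $t\mapsto u_t$, chooses control functions as in (\ref{ut-modified-control-1})--(\ref{ut-modified-control-2}) so that $B(t)=A(t,u_t)$ satisfies Assumption \ref{general-IVE1}, and then reruns the proof of Theorem \ref{general_mild_solution_lipschitz} (\cite[Theorem 2.9]{Kreulichevo} for convergence of the fixed-coefficient Yosida approximation, plus Lemma \ref{induction_step_1} to identify its limit with $u$). Your write-up merely makes explicit the boundedness and uniform-continuity bookkeeping ($M=\sup_t\nrE{u_t}<\infty$, absorption into a single term $\la\nrm{\tilde h(t_1)-\tilde h(t_2)}L_B(\nrm{x_2})$) that the paper leaves implicit.
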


\begin{proof}
As $\bk{t\mapsto u_t}$ is continuous we choose control functions for $B(t):=A(t,u_t)$ similar to (\ref{ut-modified-control-1}) and (\ref{ut-modified-control-2}), and we are in the situation of the proof of Theorem \ref{general_mild_solution_lipschitz}, with Assumption \ref{general-IVE1} instead of Assumption \ref{general-IVE2}.
\end{proof}

\begin{defi} \label{def-int-sol-inhomo}
Assume that either the Assumption \ref{general-IVE1} or Assumption \ref{general-IVE2} is satisfied for the family $\bk{B(t):t\in \jz}.$ In the case of Assumption \ref{general-IVE1} choose $g=0.$ Let $\jz=\rep.$ A continuous function 
$u:[a,b]\to X$ is called an integral solution of (\ref{Cauchy-equation}) if $u(a)=x_0$ and
\begin{eqnarray*} 
\lefteqn{\nrm{u(t)-x}-\nrm{u(r)-x}} \\
&\le &\int_r^t\fk{[y+f(\nu),u(\nu)-x]_{+}+\om\nrm{u(\nu)-x}}d\nu \\
&&+\Lw(\nrm{x})\int_r^t\nrm{\hw(\nu)-\hw(r)}d\nu +\nrm{y}\int_r^t\nrm{g(\nu)-g(r)}d\nu
\end{eqnarray*}
for all $a\le r\le t\le b,$ and $s\in [a,b],$ $[x,y]\in B(s)+\om I.$
\end{defi}

To view the found solution of the functional differential equation as an integral we  have to slightly weaken the assumption on $\hw$ in the case of Assumption \ref{IVFDE_2}. The $\hw$ is only continuous, but in view of regularity compared with the proof of \cite[Theorem 6.37]{Ito_Kappel} it is still a sufficient condition.

\begin{theo} \label{general_integral-solution-lipschitz}
Let for $\jz=[0,T]$ Assumption \ref{IVFDE_1} and Assumption \ref{IVFDE_2} hold. If $\vp\in E$  with $\vp(0)\in\overline{D},$ 
 then the solution $u\in Y$ found in Theorem \ref{0-T-solution} is an integral solution to
 (\ref{Cauchy-equation}) with $B(t)=A(t,u_t)$ and $f=0.$ That is $u$ satisfies Definition \ref{def-int-sol-inhomo} with an only continuous $\hw,$ and adequate $\Lw,$ and $g.$
\end{theo}

\begin{proof} 
To prove the claim we want to apply Theorem \ref{general_mild_solution_lipschitz}, which needs a Lipschitz initial history with $\vp(0)\in \hat{D}$ In doing so we will construct an appropriate approximation. For this purpose let the starting point $\psi$ and $\tilde{\vp}^m\in E$ Lipschitz with $\nrE{\tilde{\vp}^m-\vp}\le \frac{1}{m}.$  As $\overline{\hat{D}}=\overline{D}$ we find $\bk{x_m}_{m\in\za}\subset \hat{D}$ such that $\nrm{x_m- \vp(0)}\le \frac{1}{m}. $
Defining  
$$
\vp^m(t):=\left\{\begin{array}{rcl} 
					-mt\tilde{\vp}^m(1/m)+(1+mt)x_m &:& -\frac{1}{m}\le t \le 0 \\
					\tilde{\vp}^m(t) &:& t<-\frac{1}{m}
					\end{array} \right.
$$
we prove $\vp^m\to\vp$ 
We only have to verify the convergence for $-\frac{1}{m}\le t \le 0. $  For such $t$ we have
\begin{eqnarray*}
\lefteqn{\nrm{\vp^m(t)-\vp(t)}} \\
&\le& \nrm{-mt(\tilde{\vp}(1/m)-\vp(t))} +\nrm{(1+mt)(x_m-\vp(0)}+\nrm{(1+mt)(\vp(0)-\vp(t))} \\
&\le&\nrm{\tilde{\vp}(1/m)-\vp(1/m}+\nrm{\vp(1/m)-\vp(t)}+ \nrm{(x_m-\vp(0)}+ \nrm{\vp(0)-\vp(t)} \\
&& \to 0 \mbox{ when } m\to \infty,
\end{eqnarray*}

by the uniform continuity of $\vp$ and $\nrm{x_m-\vp(0)}\le \frac{1}{m},$
Next we claim $\vp^m$ is Lipschitz with $L_{\vp^m}=L_{\tilde{\vp}^m}+2.$ As $\tilde{\vp}^m$ is Lipschitz, it remains to consider $-\frac{1}{m}\le t,s \le 0$
\begin{eqnarray*}
\lefteqn{\nrm{-tm \tilde{\vp}^m(1/m)+(1+mt)x_m +sm \tilde{\vp}^m(1/m)-(1+ms)x_m}} \\
&\le& \nrm{(s-t)m (\tilde{\vp}^m(1/m) -\tilde{\vp}^m(0))} +\nrm{(s-t)m(x_m-\tilde{\vp}^m(0))} \\
&\le& \btr{s-t}\fk{L_{\tilde{\vp}^m}+m\nrm{x_m-\tilde{\vp}^m(0)}}\\
&\le& \btr{s-t}\fk{L_{\tilde{\vp}^m} +2}.
\end{eqnarray*}

With these settings we consider the following approximation of the functional differential equation,

\begin{equation}  \nonumber
\left\{
\begin{array}{rcl} 
(\partial_t)_{\la}u^m_{\la,n}(t) &\in & A(t,(u^m_{\la,n-1})_t)u^m_{\la,n}(t)+ \om u^{m}_{\la,_n(}t): \ t \in \jz \\
(u^m_{\la,n})_{|\ji}&=&\vp^m_{u^m_{\la,n-1},\la}.
\end{array}
\right.
\end{equation}
Note that
$$
u^m_{\la,n}=T_{\la,\vp^m}^n\psi.
$$

As $\vp^m$ is Lipschitz and $\vp^m(0)\in\hat{D}$ we find by Lemma (\ref{equi_Lipschitz}) $u^m_n=\lim_{\la\to 0}u_{\la,n}^m$ is Lipschitz. Moreover $B(t)=A(t,(u^m_n)_t)$ satisfies Assumption \ref{general-IVE2} with a modified "$h$" defined by, 

\begin{equation} \nonumber
\Funk{h_n^m}{\jz}{(X\times X\times E, \nrm{\cdot}_1)}{t}{(h^{\om}(t),k(t),(u_{n-1}^m)_t),}
\end{equation}
and
\begin{equation} \
L_B^m(t)=L_1(t)+L_2\fk{\sup_{n\in\za,s\in[0,T]}\nrE{(u_n^m)_s}}+\frac{K_0}{1-\la\om}.
\end{equation}
Let $h^m:=\lim_{n\to\infty}h_n^m.$

If the initial value of the Cauchy Problem equals $\vp^m(0),$ by \cite[Theorem 2.9]{Kreulichevo} the integral solution comes with the limit $\la\to 0$ of

\begin{equation} \nonumber
\left\{
\begin{array}{rcl} 
(\partial_t)_{\la}v^m_{\la,n}(t) &\in & A(t,(u^m_{n-1})_t)v^m_{\la,n}(t)+ \om v^{m}_{\la,n}(t): \ t \in \jz \\
v^m_{\la,n}(0)&\to& \vp^m(0) \mbox{ when } \la \to 0.
\end{array}
\right.
\end{equation}

Consequently, from the recursion we have  $\lim_{\la\to 0}u^m_{\la,n}=u^m_n$ and Lemma \ref{induction_step_1} leads to $v^m_n=u^m_n.$ Hence, $u^m_n$ is an integral solution and we find for every $n,m\in \za,$
\begin{eqnarray} \label{approx-integral-solution-ineq} 
\lefteqn{\nrm{u^m_n(t)-x^m_n}-\nrm{u^m_n(r)-x^m_n}} \nonumber \\
&\le &\int_r^t\fk{[y^m_n,u^m_n(\nu)-x^m_n]_{+}+\om\nrm{u^m_n(\nu)-x^m_n}}d\nu \\
&&+L_B^m(\nrm{x^m_n})\int_r^t\nrm{h_n^m(\nu)-h_n^m(r)}d\nu +\nrm{y^m_n}\int_r^t\nrm{g(\nu)-g(r)}d\nu \nonumber
\end{eqnarray}
for all $a\le r\le t\le b,$ and $s\in [a,b],$ $[x^m_n,y^m_n]\in A(s,(u^m_{n})_s)+\om I.$ Applying Remark \ref{history_control}, we have
$$
\nrm{J_{\la}(t,u^m_n)_s)x -J_{\la}(t,u^m)x}\le\frac{\la K_0}{1-\la\om}\nrm{u^m_n-u^m}_E,
$$
and the convergence of the recursion gives $\lim_{n\to\infty}u^m_n=u^m.$ Using Assumption \ref{IVFDE_1} we are in the situation of \cite[Theorem 10.5.]{Ito_Kappel} and \cite[(10.6)]{Ito_Kappel}. Thus, for $[x^m,y^m]\in A(t,u^m_t)$ we find
$[x^m_n,y^m_n]\in A(t,(u^m_n)_t)$ such that $\lim_{n\to\infty}(x^m_n,y^m_n)=(x^m,y^m).$ Hence we may pass to the limit $n\to\infty$ in the inequality (\ref{approx-integral-solution-ineq}).

It remains to do the limit $m\to \infty,$ which will be obtained with a stability result on the initial history. From Lemma \ref{psi-la-approx-gFDE} inequality (\ref{iterate_ineq}) we obtain
\begin{eqnarray*}
\nrm{u^l_{\la,n}(t) -u^k_{\la,n}(t)}
&\le& \frac{\la K_0}{1-\la\om}\nrm{(u^l_{\la,n-1})_t-(u^k_{\la,n-1})_t}_E  \\
&&+\frac{K_0}{1-\la\om}\int_0^t\exp\fk{\frac{\om}{1-\la\om}\tau}\nrm{(u^l_{\la,n-1})_{t-\tau}-(u^k_{\la,n-1})_{t-\tau}}_E d\tau \\
&& +\fk{\frac{1}{1-\la\om}\exp\fk{-\frac{t}{\la}} +\exp\fk{\frac{\om t }{1-\la\om}}}\nrm{\vp^l(0)-\vp^k(0)}. 
\end{eqnarray*}
When passing to $\la\to 0 $ and $n\to \infty ,$ the estimate $\exp(\om t)\le \exp\fk{\btr{\om} T},$  and $u^l_{|\ji}=\vp^l,$ we obtain

\begin{eqnarray*}
\nrE{u^l_t -u^k_t}
&\le &K_0\exp\fk{\btr{\om} T} \int_0^t\nrm{u^l_{\tau}-u^k_{\tau}}_E d\tau +\exp\fk{\btr{\om} T}\nrE{\vp^l-\vp^k}. 
\end{eqnarray*}
An application of Proposition \ref{s-t-integral-inequality}  gives for an adequate constant $K_2$
\begin{eqnarray*}
\nrE{u^l_t -u^k_t}
&\le & K_2\nrE{\vp^l-\vp^k}.
\end{eqnarray*}
Hence, $\bk{u^l}_{l\in\za}$ is Cauchy in $Y$. Let $u:=\lim_{m\to\infty}u^m, $ $h:=\lim_{m\to\infty} h^m,$ and $L_B(t):=\sup_{m\in\za}L_B^m(t).$ Note that $h$ is only continuous as mentioned.
By a second application of  Remark \ref{history_control}, 
$$
\nrm{J_{\la}(t,u^m_s)x -J_{\la}(t,u_s)x}\le\frac{\la K_0}{1-\la\om}\nrm{u^m-u}_E,
$$ 
and Assumption \ref{IVFDE_1} together with \cite[Theorem 10.5.]{Ito_Kappel} and \cite[(10.6)]{Ito_Kappel} lead, for given $[x,y]\in A(t,u_t)$ to $[x^m,y^m]\in A(t,u^m_t)$  such that $\lim_{m\to\infty}[x^m,y^m]=[x,y].$
Recalling inequality (\ref{approx-integral-solution-ineq}) we obtain when passing to $m\to \infty,$
\begin{eqnarray*} 
\lefteqn{\nrm{u(t)-x}-\nrm{u(r)-x} } \\
&\le &\int_r^t\fk{[y,u(\nu)-x]_{+}+\om\nrm{u(\nu)-x}}d\nu \\
&&+L_B(\nrm{x})\int_r^t\nrm{h(\nu)-h(r)}d\nu +\nrm{y}\int_r^t\nrm{g(\nu)-g(r)}d\nu
\end{eqnarray*}
for all $a\le r\le t\le b,$ and $s\in [a,b],$ $[x,y]\in A(s,(u)_s)+\om I,$ which concludes the proof.
\end {proof}

\begin{cor}  \label{general_integral-solution-continuous}
Let for $\jz=[0,T]$ Assumption \ref{IVFDE_1} and Assumption \ref{IVFDE_2_og} hold. If $\vp\in E$  with $\vp(0)\in\overline{D},$ 
 then the solution $u\in Y$ found in Theorem \ref{0-T-solution} is an integral solution to
 (\ref{Cauchy-equation}) with $B(t)=A(t,u_t),$ and $g=0.$
\end{cor}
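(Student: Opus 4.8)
The plan is to follow the proof of Theorem \ref{general_integral-solution-lipschitz} \emph{mutatis mutandis}, making three substitutions throughout: Assumption \ref{IVFDE_2} is replaced by Assumption \ref{IVFDE_2_og}, Assumption \ref{general-IVE2} by Assumption \ref{general-IVE1}, and Theorem \ref{general_mild_solution_lipschitz} by its continuous counterpart Theorem \ref{general_mild_solution_cont}; correspondingly the function $g$ disappears (one sets $g\equiv 0$). First I would reproduce the approximation construction: pick a recursion starting point $\psi\in Y$ and Lipschitz histories $\tilde\vp^m\in E$ with $\nrE{\tilde\vp^m-\vp}\le 1/m$, and, since $\overline{\hat D}=\overline D$ and $\vp(0)\in\overline D$, points $x_m\in\hat D$ with $\nrm{x_m-\vp(0)}\le 1/m$; gluing these as in Theorem \ref{general_integral-solution-lipschitz} yields Lipschitz histories $\vp^m$ with $\vp^m(0)=x_m\in\hat D$ and $\vp^m\to\vp$ in $E$. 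Running Recursion \ref{recursion} with initial history $\vp^m$ produces the approximants $u^m_{\la,n}=T^n_{\la,\vp^m}\psi$ with limits $u^m_n=\lim_{\la\to 0}u^m_{\la,n}\in Y$.

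The single genuine deviation from the Lipschitz proof occurs where Theorem \ref{general_integral-solution-lipschitz} invokes Lemma \ref{equi_Lipschitz} to conclude that $u^m_n$ is Lipschitz. Under Assumption \ref{IVFDE_2_og} that lemma is unavailable, as it rests on the $g$-term of Assumption \ref{IVFDE_2}. The point of the argument, however, is that it is not needed: Assumption \ref{general-IVE1} only requires the modified control function to be bounded and uniformly continuous, not Lipschitz. Concretely, setting $\tilde h^m_n(t)=(\hw(t),k(t),(u^m_{n-1})_t)$, the membership $u^m_{n-1}\in Y=BUC$ makes $t\mapsto(u^m_{n-1})_t$ uniformly continuous, so $\tilde h^m_n$ is bounded and uniformly continuous and the non-autonomous family $B(t)=A(t,(u^m_{n-1})_t)$ satisfies Assumption \ref{general-IVE1} with $\tilde h^m_n$ and an $L$ absorbing $L_1^{\om},L_2,K_0$. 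This is exactly the input of Theorem \ref{general_mild_solution_cont}, so \cite[Theorem 2.9]{Kreulichevo} applies at each recursion step; combined with $v^m_n=u^m_n$ (via Lemma \ref{induction_step_1}) it gives that each $u^m_n$ is the integral solution of its Cauchy problem, i.e. it satisfies Definition \ref{def-int-sol-inhomo} with $g=0$ and an \emph{only continuous} $\hw$, precisely as the remark preceding Theorem \ref{general_integral-solution-lipschitz} anticipates.

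With the integral inequality for $u^m_n$ available (the $g$-integral simply absent), the two limit passages go through verbatim. For $n\to\infty$ one uses the recursion convergence $u^m_n\to u^m$, Remark \ref{history_control} for stability of $J_\la$ in the history slot, and the m-dissipativity of Assumption \ref{IVFDE_1} together with \cite[Theorem 10.5.]{Ito_Kappel} and \cite[(10.6)]{Ito_Kappel} to approximate any $[x^m,y^m]\in A(t,u^m_t)$ by $[x^m_n,y^m_n]\in A(t,(u^m_n)_t)$, thereby transporting the inequality to the limit. For $m\to\infty$ one uses inequality (\ref{iterate_ineq}) of Lemma \ref{ul-fixpoint} and Proposition \ref{s-t-integral-inequality} to show $\{u^m\}$ is Cauchy in $Y$ with limit $u$, sets $h=\lim_m h^m$ (still only continuous) and $L_B=\sup_m L_B^m$, and again invokes \cite[Theorem 10.5.]{Ito_Kappel} to pass the inequality to $[x,y]\in A(t,u_t)$, yielding Definition \ref{def-int-sol-inhomo} for $u$ with $g=0$.

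The main obstacle to monitor is thus structural rather than analytic: one must check that every appeal to Lipschitz regularity of $u^m_n$ in Theorem \ref{general_integral-solution-lipschitz} — all of which funnel through Lemma \ref{equi_Lipschitz} and the $g$-term — can be downgraded to mere uniform continuity. Since the only such appeal is the verification of the abstract control assumption for $B(t)$, and Assumption \ref{general-IVE1} tolerates a continuous $\hw$, the downgrade is harmless and the conclusion follows with $g\equiv 0$.
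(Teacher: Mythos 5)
Your proposal is correct and matches the paper's intended argument: the paper gives no separate proof of this corollary precisely because it is the proof of Theorem \ref{general_integral-solution-lipschitz} run \emph{mutatis mutandis} with Assumption \ref{IVFDE_2_og}, Assumption \ref{general-IVE1}, and Theorem \ref{general_mild_solution_cont} in place of their Lipschitz counterparts and $g=0$. You also correctly identify the one structural point worth checking — that Lemma \ref{equi_Lipschitz} is both unavailable and unnecessary, since $u^m_{n-1}\in Y$ already makes $t\mapsto (u^m_{n-1})_t$ bounded and uniformly continuous, which is all Assumption \ref{general-IVE1} demands of the modified control function.
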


\section{Special Case $A(t,\vp)=B(t)+F(t,\vp)$}

In the following we discuss the special case of $A(t,\vp)=B(t)+F(t,\vp)$ and assume throughout this section that $F$ satisfies the Assumption \ref{control-F(t,vp)}. We will show that in this case the solution found in Theorem \ref{0-T-solution} coincides with the integral solution.

\begin{assu} \label{control-F(t,vp)}
Let $F:\rep\times E \to X$ such that for some $K_0>0,$ a non-decreasing $L_2:\rep\to\rep$ and $k:\rep\to X,$
\begin{equation}
\nrE{F(t,\vp_1)-F(s,\vp_2)}\le K_0\nrE{\vp_1-\vp_2}+\nrm{k(t)-k(s)}L_2(\nrm{\vp_2}_E),
\end{equation}
$\vp_i\in E,$  for $i=1,2.$
\end{assu}

\begin{remk} \label{F(t,vp)-satisfies-control}
By the triangle inequality we obtain that if $F$ satisfies Assumption \ref{control-F(t,vp)} with $k$ uniformly continuous and $B(t)$ satisfies Assumption \ref{general-IVE1}, then $A(t,\vp):=B(t)+F(t,\vp)$ satisfies Assumption \ref{IVFDE_2_og}, and if $F$ satisfies Assumption \ref{control-F(t,vp)} with $k$ Lipschitz and $B(t)$ satisfies Assumption \ref{general-IVE2}, then $A(t,\vp):=B(t)+F(t,\vp)$ satisfies Assumption \ref{IVFDE_2} 
\end{remk} 

To obtain an integral solution to the Functional-Differential Equation,
\begin{equation} \label{FDEonInterval}
\left\{
\begin{array}{rcl} 
u^{\prime}(t) &\in & B(t)u(t)+ \om u(t)+F(t,u_t): \ t \in \jz \\
u_{|\ji}&=&\vp
\end{array}
\right.
\end{equation}
we view $F(t,u_t)$ as the inhomogenity $f.$ So the definition becomes:

\begin{defi} \label{def-int-sol-FDE}
Assume that $F$ satisfies Assumption \ref{control-F(t,vp)}, and either the Assumption \ref{general-IVE1} or Assumption \ref{general-IVE2} is satisfied for the family $\bk{B(t):t\in \jz}.$ In the case of Assumption \ref{general-IVE1} choose $g=0.$ Let $\jz=\rep.$ A continuous function 
$u:[a,b]\to X$ is called an integral solution of (\ref{FDEonInterval}) if $u_{|\ji}=\vp$ and
\begin{eqnarray*} 
\lefteqn{\nrm{u(t)-x}-\nrm{u(r)-x}} \\
&\le &\int_r^t\fk{[y+F(\nu,u_{\nu}),u(\nu)-x]_{+}+\om\nrm{u(\nu)-x}}d\nu \\
&&+\Lw(\nrm{x})\int_r^t\nrm{\hw(\nu)-\hw(r)}d\nu +\nrm{y}\int_r^t\nrm{g(\nu)-g(r)}d\nu
\end{eqnarray*}
for all $a\le r\le t\le b,$ and $s\in [a,b],$ $[x,y]\in B(s)+\om I.$
\end{defi}

\begin{cor}
Let $\jz=[0,T],$ $\vp\in E$ with $\vp(0)\in \overline{D},$ $F$ satisfies Assumption \ref{control-F(t,vp)}, and $B(t)$ Assumption \ref{general-IVA0}. If $B(t)$ satisfies either Assumption \ref{general-IVE1} and $k$ is uniformly continuous, or Assumption \ref{general-IVE2} and $k$ is Lipschitz, then the solution $u$ of (\ref{FDEonInterval}) is an integral solution of (\ref{Cauchy-equation}), with $f(t)=F(t,u_t).$ 
\end{cor}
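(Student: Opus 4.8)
The plan is to exhibit $A(t,\vp)=B(t)+F(t,\vp)$ as an admissible family for the theory of Sections \ref{Abstract-FDEs}--\ref{Half-Line-FDEs}, to invoke Theorem \ref{general_integral-solution-lipschitz} (resp. Corollary \ref{general_integral-solution-continuous}) for the combined operator $A(\cdot,u_\cdot)$, and then to move the $F$-part out of the operator into a forcing term so as to land in Definition \ref{def-int-sol-FDE}.

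First I would check the hypotheses for $A(t,\vp)=B(t)+F(t,\vp)$. For fixed $(t,\vp)$ the value $F(t,\vp)\in X$ is a constant vector, so $A(t,\vp)$ is a constant translate of $B(t)$; dissipativity is insensitive to such a shift, and $\mathrm{Ran}(I-\la A(t,\vp))=\mathrm{Ran}(I-\la B(t))-\la F(t,\vp)=X$, whence $A(t,\vp)$ is m-dissipative and Assumption \ref{IVFDE_1} holds. Remark \ref{F(t,vp)-satisfies-control} then supplies Assumption \ref{IVFDE_2_og} (when $B$ satisfies Assumption \ref{general-IVE1} and $k$ is uniformly continuous) or Assumption \ref{IVFDE_2} (when $B$ satisfies Assumption \ref{general-IVE2} and $k$ is Lipschitz). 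Thus (\ref{FDEonInterval}) is exactly (\ref{gFDEonInterval}) for this $A$, its solution $u$ is the one produced by Theorem \ref{0-T-solution}, and Theorem \ref{general_integral-solution-lipschitz} (resp. Corollary \ref{general_integral-solution-continuous}) shows that $u$ is an integral solution of (\ref{Cauchy-equation}) for the family $\tilde B(t):=A(t,u_t)=B(t)+F(t,u_t)$ with $f=0$.

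It remains to reinterpret this integral solution, stated for the elements $[x,\tilde y]\in A(s,u_s)+\om I$, as one for the operator $B$ with forcing $f(t):=F(t,u_t)$. Given $[x,y]\in B(s)+\om I$ we have $[x,\,y+F(s,u_s)]\in A(s,u_s)+\om I$, and subadditivity of the bracket together with $[z,w]_+\le\nrm{z}\,\nrm{w}$ give $[\,y+F(s,u_s),u(\nu)-x]_+\le[\,y+F(\nu,u_\nu),u(\nu)-x]_+ +\nrm{F(s,u_s)-F(\nu,u_\nu)}\,\nrm{u(\nu)-x}$, which puts the forcing $F(\nu,u_\nu)$ inside the bracket as Definition \ref{def-int-sol-FDE} demands. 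By Assumption \ref{control-F(t,vp)} the residual is governed by $\nrm{F(s,u_s)-F(\nu,u_\nu)}\le K_0\nrm{u_s-u_\nu}_E+\nrm{k(s)-k(\nu)}L_2(\nrm{u_\nu}_E)$, that is, by exactly the $k$- and history-components that make up the control function $\tilde h=(h^{\om},k,u_{\cdot})$ of $\tilde B$ in (\ref{ut-modified-control-1}).

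The delicate point, which I expect to be the main obstacle, is that this residual cannot be absorbed into the control terms of Definition \ref{def-int-sol-FDE}, which are built solely from the $B$-data $\hw$ and $g$ and contain neither $k$ nor the history; a purely algebraic comparison of the two integral inequalities therefore stalls. The way around is to work at the level of the defining resolvent equations, which coincide: for a Lipschitz history $\vp^m$ with $\vp^m(0)\in\hat{D}$ the approximants $u^m$ satisfy $(\partial_t)_{\la}w_{\la}(t)\in A(t,(u^m)_t)w_{\la}(t)+\om w_{\la}(t)$, and this is literally the relation $(\partial_t)_{\la}w_{\la}(t)\in B(t)w_{\la}(t)+\om w_{\la}(t)+F(t,(u^m)_t)$. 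Hence the mild solution of $A(\cdot,(u^m)_\cdot)$ with $f=0$ (Theorem \ref{general_mild_solution_lipschitz}) is the mild solution of $B$ with the bounded, uniformly continuous forcing $F(\cdot,(u^m)_\cdot)$; since $B$ satisfies Assumption \ref{general-IVE1}, resp. \ref{general-IVE2}, \cite[Theorem 2.9]{Kreulichevo} identifies it with the integral solution of Definition \ref{def-int-sol-inhomo}, so $u^m$ meets Definition \ref{def-int-sol-FDE}. Passing $m\to\infty$ exactly as in the proof of Theorem \ref{general_integral-solution-lipschitz}, via the history-stability estimate derived there from (\ref{iterate_ineq}) and Remark \ref{history_control}, transfers the inequality to the limit $u$ and settles the general case $\vp(0)\in\overline{D}$, without assuming $\vp$ Lipschitz.
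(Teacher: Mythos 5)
Your proof is correct, but it reaches the conclusion by a genuinely different route than the paper, even though both rest on the same key identity: the Yosida inclusion $\fk{\partial_t}_{\la}w_{\la}(t)\in A(t,\psi_t)w_{\la}(t)+\om w_{\la}(t)$ for $A=B+F$ is literally the inclusion for $B(t)$ with forcing $F(t,\psi_t)$. You collapse the recursion first and regularize the history: you take Lipschitz $\vp^m$ with $\vp^m(0)\in\hat{D}$, invoke Theorem \ref{general_mild_solution_lipschitz} (resp.\ Theorem \ref{general_mild_solution_cont}) for the composite operator frozen at $u^m$, reinterpret the approximants as the $B$-problem with forcing $F(\cdot,(u^m)_\cdot)$, identify mild and integral solutions via \cite[Theorem 2.9]{Kreulichevo} using $B$'s own Lipschitz controls, and pass $m\to\infty$ by the history-stability estimate. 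The paper performs the same reinterpretation one level earlier, inside the recursion: it introduces $v_{n+1,\la}$ solving the inhomogeneous $B$-Cauchy problem with forcing $F(t,(u_{n,\la})_t)$ --- the very resolvent equation of the recursion step --- so that \cite[Theorem 2.9]{Kreulichevo} and Lemma \ref{induction_step_1} give, for each fixed $n$, that $u_{n+1}=v_{n+1}$ satisfies Definition \ref{def-int-sol-inhomo} with $f=F(\cdot,(u_n)_\cdot)$, and then passes $n\to\infty$ with $[x,y]\in B(s)+\om I$ fixed throughout. The paper's arrangement buys two simplifications your route pays for: since at each level $n$ the forcing is already a fixed function and $B$ carries its own Lipschitz controls $h,g$, no Lipschitz regularization $\vp^m$ (hence none of the equi-Lipschitz machinery of Lemma \ref{equi_Lipschitz} behind Theorem \ref{general_mild_solution_lipschitz}) is needed, so $\vp(0)\in\overline{D}$ is used directly; and since the operator family never changes, no element-approximation in the sense of \cite{Ito_Kappel} arises in the limit. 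On the credit side, your diagnosis of why the naive transfer from Theorem \ref{general_integral-solution-lipschitz} stalls --- the composite control $\tilde{h}=(h^{\om},k,u_\cdot)$ contaminates the integral inequality with $k$- and history-terms absent from Definition \ref{def-int-sol-FDE}, and such nonnegative extra terms cannot simply be dropped --- is exactly right and explains why both proofs must argue at the level of the resolvent equations; both likewise elide the same routine point, the upper-semicontinuity passage of $[\cdot,\cdot]_+$ under the uniform convergence of the forcing guaranteed by Assumption \ref{control-F(t,vp)}.
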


\begin{proof}
Let $\bk{u_{n,\la}}_{n\in\za,\la>0}$ the approximation given by Recursion \ref{recursion}, and $u_n=\lim_{\la\to 0}u_{n,\la}.$ Considering the Cauchy problem,
\begin{equation} 
\left\{
\begin{array}{rcl}
(\partial_{\la}v_{n+1,\la}(t)(t) &\in & B(t)v_{n+1,\la}(t)+ \om v_{n+1,\la}(t)+F(t,u_{n,\la}(t)): \ t \in \jz \\
v_{n+1,\la}(0)&=&\vp_{v_{n,\la},\la}(0)
\end{array}
\right.
\end{equation}
By \cite[Theorem 2.9]{Kreulichevo} $v_n=\lim_{\la\to 0} v_{n,\la}$ is the integral solution in the sense of Definition \ref{def-int-sol-inhomo} with $f(t)=F(t,(u_n)_t)$ to
\begin{equation} 
\left\{
\begin{array}{rcl} 
v_{n+1}^{\prime}(t) &\in & B(t)v_{n+1}(t)+ \om v_{n+1}(t)+F(t,(u_n)_t): \ t \in \jz \\
v_{n+1}(0)&=&\vp(0).
\end{array}
\right.
\end{equation}
As $\lim_{\la\to 0} u_{n,\la}=u_n$ by Lemma \ref{induction_step_1} $v_n=u_n.$  Moreover, $u_{n+1}$ satisfies the inequality,
\begin{eqnarray*} 
\lefteqn{\nrm{u_{n+1}(t)-x}-\nrm{u_{n+1}(r)-x}} \\
&\le &\int_r^t\fk{[y+F(\nu,(u_n)_{\nu}),u_{n+1}(\nu)-x]_{+}+\om\nrm{u_{n+1}(\nu)-x}}d\nu \\
&&+\Lw(\nrm{x})\int_r^t\nrm{\hw(\nu)-\hw(r)}d\nu +\nrm{y}\int_r^t\nrm{g(\nu)-g(r)}d\nu
\end{eqnarray*}
for all $0\le r\le t \le T,$ and $s\in [a,b],$ $[x,y]\in B(s)+\om I.$ Passing to $n\to\infty$ completes the proof.
\end{proof}

For related results compare the studies of Ghavidel \cite{Ghavidel}, Ghavidel/Ruess \cite{GhavidelRuess},  Kartsatos \cite{bd-Kartsatos}, Kartsatos/Liu \cite{KartsatosLiu}, Kartsatos/Parrot \cite{KartsatosParrot}, Jeong/Shin \cite{Jeong}, Ruess/Summers \cite{Ruess_Summers_stability},\cite{Ruess_FDE_aut},  and Tanaka \cite{Tanaka}. Moreover, the equation is discussed in the textbook \cite{Ha}.

\section{Application}
Next we give a short application to asymptotically almost periodic functions, which extends \cite[Theorem 7.2]{Kreulichevo} to the cases of finite and infinite delay with a given initial history $\vp.$ Let
\begin{eqnarray*}
\lefteqn{AAP(\rep,X) } \\
&=&\bk{f\in BUC(\rep,X):\bk{t\mapsto f(t+s)}_{s\in\rep} \subset BUC(\rep,X)  \mbox{ is relatively compact} },\\
\lefteqn{AP(\re,X)} \\ 
&=&\bk{f\in BUC(\re,X):\bk{t\mapsto f(t+s)}_{s\in\re} \subset BUC(\re,X)  \mbox{ is relatively compact} }
\end{eqnarray*}
With the above definitions note that,
$$
AAP(\rep,X)=AP(\re,X)_{|\rep}\oplus C_0(\rep,X).
$$

Due to the fact that an almost periodic function $u$ is completely known if $u_{|\rep} $ is given we can define,
\begin{eqnarray*}
\lefteqn{AAP^+(\ji\cup\rep,X) } \\
&=&\bk{f\in BUC(\ji\cup\re,X):\bk{\rep\ni t\mapsto f(t+s)}_{s\in\rep} \subset BUC(\rep,X)  \mbox{ is relatively compact} },\\
\end{eqnarray*}
and we obtain with the projection 
$$
\Funk{P^+}{AAP^+(\ji\cup\rep,X)}{AP(\re,X)}{f}{P_a(f_{|\rep})}
$$
and 
$$C_0^+(\ji\cup\rep,X)=\bk{f\in BUC(\ji\cup \rep,X):\ilm{t}f(t)=0}$$
the decomposition
$$
AAP^+(\ji\cup\rep,X)=AP(\re,X)_{|\ji\cup\rep}\oplus C_0^+(\ji\cup\rep,X).
$$
We call $f^a:=P^+(f)$ the almost periodic part of $f\in AAP^+(\ji\cup\rep,X)$ as well.

If the resolvent of $B(t), $ $J_{\la}(t) $ satifies for all 
$x\in X,$ 
$\bk{t\mapsto J_{\la}(t)x} \in AAP^+(\ji\cup\rep,X)$ 
we conclude with $\bk{t\mapsto J_{\la}^a(t)x} \in AP(\re,X), $ and $\bk{t\mapsto J_{\la}^0(t)x}\in C_0^+(\ji\cup\rep,X)$
$$
\bk{t\mapsto J_{\la}(t)x}=\bk{t\mapsto J_{\la}^a(t)x} + \bk{t\mapsto J_{\la}^0(t)x}
$$
and the almost periodic parts $\bk{t\mapsto J_{\la}^a(t)x}$ are pseudo resolvents in the sense of \cite[Definition 7.1]{Miyadera}. Thus, we can define an almost-periodic part of $B(t),$ the operators 
$$
B^a(t)=\frac{1}{\la}\fk{I-(J_{\la}^a(t))^{-1}},
$$ 
compare \cite{KreulichSplit}.

In the forthcoming theorem it is shown that, even in the infinie delay case, an almost periodic solution is found. As consequence of the previous results it is found that not only the operator has to become almost periodic, the initial history as well.
\begin{theo}
Let $F:\rep\times E \to X$ satisfy the Assumption \ref{control-F(t,vp)},
$\vp\in E$ with $\vp(0)\in \overline{D},$
Let $B(t)$ satisfy Assumption \ref{general-IVA0} and either Assumption \ref{general-IVE1} with $K_0< -\om$ and $k$ uniformly continuous, or Assumption \ref{general-IVE2} with $\max(K_0,L_g)<-\om$ and $k$ Lipschitz.
Further, let
$$
Y=\bk{f \in BUC(\ji\cup\rep,X): f_{|\rep}\in AAP(\rep,X)}=AAP^+(\ji\cup\rep,X).
$$
If 
$$
\bk{t \mapsto J_{\la}^{\om}(t)x},\bk{t \mapsto F(t,\psi_t)} \in AAP^+(\ji\cup\rep,X) 
$$
for all $\psi\in Y,$ and $x\in X$ then the integral solution $u$ of (\ref{FDEonInterval}) is an element of $Y.$
Let for given $\vp\in E$ $\bk{ t\mapsto F^a (t,\vp) }$ the almost periodic part of $\bk{t\mapsto F(t,\vp)}.$ If
\begin{equation}\label{F(t_y_a)}
P^+(\bk{t\mapsto F(t,\psi_t)})=F^a(t,\psi^a_t),
\end{equation}
for all $\psi\in Y,$  then the almost periodic part $u^a$ of the solution $u$ is a generalized solution to the equation
$$
(u^a)^{\prime}(t)\in B^a(t)u^a(t)+\om u^a(t)+F^a(t,u^a_t), \mbox{ for all } t\in \rep.
$$

\end{theo}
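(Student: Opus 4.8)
The plan is to treat the two assertions in turn, reducing the membership $u\in Y$ to Theorem \ref{half-line-asymptotics} and obtaining the almost periodic part $u^a$ by pushing the projection $P^+$ through the recursion of Recursion \ref{recursion}. For the first assertion set $Z=AAP(\rep,X)$, which is a closed, translation invariant subspace of $BUC(\rep,X)$ with $C_0(\rep,X)\subset Z\subset BUC(\rep,X)$. By Remark \ref{F(t,vp)-satisfies-control} the family $A(t,\vp)=B(t)+F(t,\vp)$ satisfies Assumption \ref{IVFDE_2_og} (when $k$ is uniformly continuous) or Assumption \ref{IVFDE_2} (when $k$ is Lipschitz), and the hypotheses $K_0<-\om$, respectively $\max\bk{K_0,L_g}<-\om$, are exactly the sign conditions required by Theorem \ref{half-line-asymptotics}. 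Hence it remains only to verify the structural hypothesis of that theorem, namely $\bk{t\mapsto \Jlw(t,\psi_t)f(t)}\in Z$ for all $\psi\in Y$ and $f\in Z$; the conclusion $u\in Y$ then follows directly.

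To verify this hypothesis I would use that $F(t,\vp)\in X$ enters $A(t,\vp)$ as a constant perturbation, so that the resolvent factorises as
$$
\Jlw(t,\psi_t)f(t)=\Jlw(t)\fk{f(t)+\la F(t,\psi_t)},
$$
where $\Jlw(t)$ denotes the resolvent of $B(t)+\om I$. By hypothesis $\bk{t\mapsto F(t,\psi_t)}\in AAP^+$ and $f\in AAP(\rep,X)$, so $g(t):=f(t)+\la F(t,\psi_t)$ belongs to $AAP(\rep,X)$ and therefore has relatively compact range $\overline{g(\rep)}\subset X$. Since $\bk{x\mapsto \Jlw(t)x}$ is equi-Lipschitz with constant $\frac{1}{1-\la\om}$ uniformly in $t$, and $\bk{t\mapsto \Jlw(t)x}\in AAP^+$ for every fixed $x\in X$, a finite covering of $\overline{g(\rep)}$ exhibits $t\mapsto \Jlw(t)g(t)$ as a uniform limit of $AAP$-functions; as $AAP(\rep,X)$ is closed this yields $\bk{t\mapsto \Jlw(t)g(t)}\in Z$, as needed.

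For the second assertion I would apply the bounded, translation-commuting projection $P^+$ to the approximation $u_{n,\la}=T^n_{\la,\vp}\psi$, which by the first part lies in $Y$ for every $n$ and $\la$. Both building blocks of the recursion, convolution with the kernel $\frac{1}{\la}\nres{\tau}$ and application of the resolvent, are compatible with $P^+$: the convolution commutes with $P^+$ because $P^+$ is translation invariant, and on the relatively compact range of an $AAP$-function the resolvent intertwines with its almost periodic part, the pseudo-resolvent $J_{\la}^a(t)$ of \cite{KreulichSplit}. Using the factorisation above together with the compatibility hypothesis (\ref{F(t_y_a)}), $P^+(\bk{t\mapsto F(t,\psi_t)})=F^a(t,\psi^a_t)$, the almost periodic part $u^a_{n,\la}=P^+(u_{n,\la})$ satisfies
$$
(\partial_t)_{\la}u^a_{n,\la}(t)\in B^a(t)u^a_{n,\la}(t)+\om u^a_{n,\la}(t)+F^a(t,(u^a_{n-1,\la})_t),\quad t\in\re.
$$
Passing to $\la\to 0$ and then $n\to\infty$, and using the continuity of $P^+$ together with the uniform convergence supplied by Theorem \ref{halfline-uniform-convergent}, one obtains $u^a_{n,\la}\to u^a=P^+(u)$ and identifies $u^a$ as the generalized solution on $\re$ of $(u^a)'(t)\in B^a(t)u^a(t)+\om u^a(t)+F^a(t,u^a_t)$, in the spirit of \cite[Theorem 7.2]{Kreulichevo}.

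The main obstacle is the interchange of $P^+$ with the nonlinear data, that is with $F(t,\cdot)$ and with the resolvent $\Jlw(t,\cdot)$ evaluated at the time-varying histories. The projection $P^+$ is only linear, so these interchanges are not automatic; the compatibility condition (\ref{F(t_y_a)}) is precisely what forces the nonlinear term to close up on the almost periodic parts, while the pseudo-resolvent structure of \cite{KreulichSplit} provides a well-defined $B^a(t)$ and the identity $P^+\circ\Jlw(t)=J^a_\la(t)\circ P^+$ on ranges of $AAP$-functions. The equi-Lipschitz bound on the resolvents and the relative compactness of those ranges, already the engine of the first assertion, are exactly the ingredients that make these pointwise interchanges legitimate.
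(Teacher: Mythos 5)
Your proposal follows essentially the paper's own route: the first claim is reduced to Theorem \ref{half-line-asymptotics} with $Z=AAP(\rep,X)$ via the same factorisation $J^{\om}_{\la}(t,\psi_t)x=J^{\om}_{\la}(t)\fk{\la F(t,\psi_t)+x}$ — your compact-range/equi-Lipschitz covering argument being precisely the method of \cite[Chapter VII, Lemma 4.1]{Dal-Krein} that the paper cites — and the second claim is obtained, exactly as in the paper, by projecting the approximating fixed-point recursion with $P^+$, using (\ref{F(t_y_a)}) and the pseudo-resolvents $J^{a}_{\la}(t)$ of \cite{KreulichSplit}, and passing to the limits $\la\to 0$ and $n\to\infty$ by the uniform convergence from Theorem \ref{halfline-uniform-convergent}. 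The only cosmetic difference is that the paper writes the projected recursion explicitly as the whole-line fixed-point equation with kernel $\frac{1}{\la}\int_0^{\infty}\nres{\tau}(\cdot)\,d\tau$ — the term $\nres{t}\vp(0)$ and the tail of the half-line convolution being absorbed into $C_0$ — which is the precise content of your statement that the convolution ``commutes'' with $P^+$.
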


\begin{proof}
We restrict the proof to Assumption \ref{general-IVE2}. As $A(t,\vp):=B(t)+F(t,\vp)$ satisfies Assumption \ref{IVFDE_1} and \ref{IVFDE_2} and $\max(K_0,L_g)<-\om$ in view of Theorem \ref{half-line-asymptotics} it remains to show 
$$
\bk{t \mapsto J_{\la}^{\om}(t,\psi_t)f(t)} \in AAP(\rep,X) \ \mbox{ for all } \psi\in Y, 
f\in AAP(\rep,X)
$$
Letting $J_{\la}^{\om}(t)$ the resolvent to $B(t)+\om I$, then
\begin{eqnarray*}
J_{\la}^{\om}(t,\psi_t)x&=&J_{\la}^{\om}(t)\fk{\la F(t,\psi_t)+x},
\end{eqnarray*}
and the proof concludes with 
applying the methods shown in the proof of \cite[Chapter VII,Lemma 4.1]{Dal-Krein}, which apply due to Assumption \ref{control-F(t,vp)}, and $\Jlw(t)$ contractive. To prove the second claim recall that mapping on $AAP(\rep,X),$
$$
F_{n+1,\la}(u):=\bk{t\mapsto \Jl^{\om}(t)\fk{\la F(t,(u_{\la,n})_t)+\nres{t}\vp(0)+\frac{1}{\la}\int_0^t\nres{\tau}u(t-\tau)d\tau}},
$$
is the approximating fix point equation, which lead to $\bk{u_{\la,n}}_{\la>0,n\in\za}$ uniformly convergent, more concrete $\lim_{n\to \infty}\lim_{\la\to 0}u_{\la,n}$ exists uniformly on $I\cup \rep.$ Consequently, their almost periodic parts are convergent on $\re$ as well. Moreover, the almost periodic parts become a fix point of the fix point mappings given by 
$$
u^a_{\la,n+1}=\bk{t\mapsto \Jl^{\om,a}(t)\fk{\la F^a(t,(u_{\la,n}^a)_t)+\frac{1}{\la}\int_0^{\infty}\nres{\tau}u^a_{\la,n+1}(t-\tau)d\tau}},
$$
compare the methods \cite{KreulichDiss} and \cite[Appendix]{KreulichSplit}.
Note that the above fix point equation is the one for the Yosida-approximation of the whole line equation, 
$$
\fk{\frac{\partial}{\partial t}}_{\la}\ul(t)\in B^a(t)\ul(t)+\om \ul(t)+F^a(t,(\ul)_t), \mbox{ for all } t\in \re.
$$
Compare \cite[Proof of Prop. 2.14, p. 1084]{Kreulichevo}, with the right hand side $f_{\la,n}(t)=F^a(t,(u_{\la,n}^a)_t).$
Due to the uniform convergence on $\ji\cup\rep$ we may pass to the limits $\la\to 0$ and $n\to \infty$ to obtain the desired generalized almost periodic solution.

\end{proof}

\begin{remk} In the case of finite delay the assumption (\ref{F(t_y_a)}) is given.
\end{remk}

\section{Appendix}
\begin{pro}\label{non-deceasing-convolution}
Let $f,g\in C[0,T]$ positive and $f$ non-decreasing, then the convolution
$$
\bk{t\mapsto \int_0^tg(\tau)f(t-\tau)d\tau}
$$
is non-decreasing.
\end{pro}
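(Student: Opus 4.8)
The plan is to fix arbitrary $0\le t_1\le t_2\le T$ and to verify directly that the difference $h(t_2)-h(t_1)$ is non-negative, where I abbreviate $h(t):=\int_0^t g(\tau)f(t-\tau)\,d\tau$. The governing idea is to split the integral over $[0,t_2]$ at the intermediate point $\tau=t_1$, so that the first piece can be compared term-by-term with the integral defining $h(t_1)$, while the remaining piece over $[t_1,t_2]$ is manifestly non-negative.

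Concretely, the first step is to regroup the difference as
\begin{eqnarray*}
h(t_2)-h(t_1) &=& \int_0^{t_1} g(\tau)\bigl(f(t_2-\tau)-f(t_1-\tau)\bigr)\,d\tau + \int_{t_1}^{t_2} g(\tau)f(t_2-\tau)\,d\tau,
\end{eqnarray*}
which is legitimate since both $f$ and $g$, being continuous on the compact interval $[0,T]$, are integrable. The second step is to estimate the two integrands separately using the hypotheses.

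For $\tau\in[0,t_1]$ one has $t_1-\tau\le t_2-\tau$ with both arguments lying in $[0,T]$, so the monotonicity of $f$ yields $f(t_2-\tau)-f(t_1-\tau)\ge 0$; combined with $g\ge 0$ this makes the first integrand non-negative. For $\tau\in[t_1,t_2]$ the argument $t_2-\tau$ ranges over $[0,t_2-t_1]\subset[0,T]$, and the positivity of both $f$ and $g$ makes the second integrand non-negative. Hence each of the two integrals is $\ge 0$, so $h(t_2)\ge h(t_1)$, and since $t_1\le t_2$ were arbitrary the convolution is non-decreasing.

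The only point requiring attention is the bookkeeping of domains, namely checking that every argument fed to $f$ remains inside $[0,T]$, which is exactly what the two range computations above confirm. There is no genuine obstacle: the statement is a soft consequence of the monotonicity of $f$ together with the positivity of $f$ and $g$, and the entire proof is essentially the single integral-splitting identity displayed above.
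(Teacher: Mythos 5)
Your proof is correct and uses exactly the same decomposition as the paper: splitting $h(t_2)-h(t_1)$ at $\tau=t_1$ into $\int_0^{t_1} g(\tau)\bigl(f(t_2-\tau)-f(t_1-\tau)\bigr)\,d\tau + \int_{t_1}^{t_2} g(\tau)f(t_2-\tau)\,d\tau$ and invoking the monotonicity of $f$ for the first term and positivity for the second. Your version merely spells out the domain bookkeeping that the paper leaves implicit.
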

\begin{proof}
Let $t,s\in [0,T]$ with $t\ge s, $ then
\begin{eqnarray*}
\lefteqn{\int_0^tg(\tau)f(t-\tau)d\tau-\int_0^sg(\tau)f(s-\tau)d\tau } \\
&=& \int_0^sg(\tau)\fk{f(t-\tau)-f(s-\tau)}d\tau +\int_s^tg(\tau)f(t-\tau)d\tau.
\end{eqnarray*}
As the difference is positive we are done.
\end{proof}

\begin{lem} \label{s-t-integral-inequality} \label{ineq-finite-integral}
Let $t>a$, $\alpha>0$ and $u,f \in C[0,T]$ such that
$$
u(t)\le f(t)+\alpha \int_a^t\exp(-\beta(t-\tau))u(\tau)d\tau.
$$
Then
$$
u(t)\le f(t)+\alpha\int_a^t\exp((\alpha-\beta)(t-\tau))f(\tau)d\tau.
$$
\end{lem}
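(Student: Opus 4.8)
The plan is to reduce the stated inequality to the classical constant-coefficient Gronwall inequality by absorbing the $t$-dependence of the kernel $\exp(-\beta(t-\tau))$ into an integrating factor. First I would multiply the hypothesis by $\exp(\beta t)$ and set $w(t):=\exp(\beta t)u(t)$ and $g(t):=\exp(\beta t)f(t)$. Since $\exp(\beta t)\exp(-\beta(t-\tau))=\exp(\beta\tau)$, the assumed inequality becomes
$$
w(t)\le g(t)+\alpha\int_a^t w(\tau)\,d\tau,
$$
in which the kernel is now constant. Note that $w,g\in C[0,T]$ because $u,f$ are, so all integrals below are well defined.

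The second step is the Gronwall estimate for this reduced inequality. Here I would introduce the antiderivative $W(t):=\int_a^t w(\tau)\,d\tau$, which is continuously differentiable with $W'(t)=w(t)$ and $W(a)=0$. The reduced inequality then reads $W'(t)-\alpha W(t)\le g(t)$, and multiplying by the positive integrating factor $\exp(-\alpha t)$ turns the left-hand side into a total derivative,
$$
\frac{d}{dt}\big(\exp(-\alpha t)W(t)\big)\le \exp(-\alpha t)g(t).
$$
Integrating from $a$ to $t$ and using $W(a)=0$ gives $W(t)\le\int_a^t\exp(\alpha(t-\tau))g(\tau)\,d\tau$. Feeding this back into $w(t)\le g(t)+\alpha W(t)$ yields
$$
w(t)\le g(t)+\alpha\int_a^t\exp(\alpha(t-\tau))g(\tau)\,d\tau.
$$

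Finally I would undo the substitution. Dividing the last display by $\exp(\beta t)$ and inserting $w(t)=\exp(\beta t)u(t)$ and $g(\tau)=\exp(\beta\tau)f(\tau)$, the exponential factors combine as $\exp(\alpha(t-\tau))\exp(\beta\tau)\exp(-\beta t)=\exp((\alpha-\beta)(t-\tau))$, which delivers exactly the claimed bound $u(t)\le f(t)+\alpha\int_a^t\exp((\alpha-\beta)(t-\tau))f(\tau)\,d\tau$.

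I do not expect a genuine obstacle here; the only points requiring care are the bookkeeping of the exponential factors in the two substitutions and the observation that the integrating-factor argument needs nothing beyond continuity of $w$, so that $W\in C^1$. If one prefers a fully self-contained derivation avoiding the differential-inequality step, the same conclusion follows by iterating the reduced inequality and summing the resulting Neumann series, whose $n$-th term is controlled by $\alpha^n(t-a)^n/n!$ and whose simplex remainder vanishes because $w$ is bounded on $[0,T]$.
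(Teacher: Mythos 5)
Your proof is correct. Note that the paper itself states this lemma in the appendix without any proof, treating it as a standard Gronwall-type estimate, so there is no argument of the author's to compare against; your reduction via $w(t)=\exp(\beta t)u(t)$, $g(t)=\exp(\beta t)f(t)$ to the constant-kernel inequality $w(t)\le g(t)+\alpha\int_a^t w(\tau)\,d\tau$, followed by the integrating-factor step for $W(t)=\int_a^t w(\tau)\,d\tau$, is exactly the classical derivation and the exponential bookkeeping checks out: $\exp(\alpha(t-\tau))\exp(-\beta(t-\tau))=\exp((\alpha-\beta)(t-\tau))$. The only point worth making explicit is that the hypothesis must be read as holding for all $t\in[a,T]$, not just a single $t$ --- your Gronwall step uses the inequality at every $\tau\in[a,t]$ --- which is the reading the paper's applications (e.g.\ in Lemma \ref{ul-fixpoint} and Lemma \ref{equi_Lipschitz}) require, and which your argument implicitly and correctly adopts; your closing remark that a Neumann-series iteration would do equally well is also sound, since the $n$-th iterate is dominated by $\alpha^n(t-a)^n/n!$ times a bound on $w$.
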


\begin{pro} \label{quasinilpotent-boundedness}
Let $X$ be a Banach lattice with $ \nrm{\btr{x}}\le\nrm{x},$ and $T,S:X\to X$ positive then:
\begin{enumerate}
\item If for all $x\in X, x\ge 0$ $Tx\le Sx,$ then $\nrm{T^n}\le\nrm{S^n}$ for all $n\in\za.$
\item If $0\le \la \le q$ then $\la I+S \le qI+S $ and consequently 
$\nrm{(\la I+S)^n}\le \nrm{(qI+S)^n}.$ 
\item Let $T$ additionally a quasi-nilpotent operator, $0<q<1$, 
Then, if for positive $g \in X$ and positive $\bk{f_n^{\la}}_{\la >0,n\in\za}\subset X,$ with $\bk{f_1^{\la}}_{0<\la\le q}$ bounded, and 
\begin{equation}\label{boundedness-recursion}
f_{n+1}^{\la}\le g+(\la I+T)f_{n}^{\la},
\end{equation}
we have,
$
\bk{f_n^{\la}}_{0<\la \le q ,n\in\za}
$
is bounded in $X.$
\end{enumerate}
\end{pro}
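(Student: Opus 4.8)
The three assertions are proved in order, each one feeding the next, so the plan is to dispatch (1) and (2) quickly and to concentrate on the summability argument in (3).

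\textbf{Part (1).} First I would upgrade the pointwise hypothesis $Tx\le Sx$ $(x\ge0)$ to $T^nx\le S^nx$ for all $x\ge0$ by induction on $n\in\za$. The step uses that a positive operator is order preserving: from $T^nx\le S^nx$ we get, applying the positive operator $T$, that $T^{n+1}x\le T\fk{S^nx}$, and since $S^nx\ge0$ the hypothesis gives $T\fk{S^nx}\le S^{n+1}x$. To pass to the norm estimate I would use the lattice structure: for arbitrary $x$ one has $\btr{T^nx}\le T^n\btr{x}\le S^n\btr{x}$, the first inequality because $-\btr{x}\le x\le\btr{x}$ and $T^n$ is order preserving. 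Invoking the lattice-norm identity $\nrm{x}=\nrm{\btr{x}}$ together with the monotonicity of the norm ($0\le u\le v\Rightarrow\nrm{u}\le\nrm{v}$) yields $\nrm{T^nx}=\nrm{\btr{T^nx}}\le\nrm{S^n\btr{x}}\le\nrm{S^n}\nrm{x}$, and taking the supremum over $\nrm{x}\le1$ gives $\nrm{T^n}\le\nrm{S^n}$.

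\textbf{Part (2).} This is then immediate: for $x\ge0$ and $0\le\la\le q$ we have $\fk{\la I+S}x=\la x+Sx\le qx+Sx=\fk{qI+S}x$, because $\fk{q-\la}x\ge0$; both $\la I+S$ and $qI+S$ are positive operators, so part (1) applied to this pair gives $\nrm{\fk{\la I+S}^n}\le\nrm{\fk{qI+S}^n}$.

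\textbf{Part (3).} This is the main point. The plan is to unwind the recursion (\ref{boundedness-recursion}). Since $\la I+T$ is positive, hence order preserving, an induction on $n$ gives
\[
f_{n+1}^{\la}\le \sum_{k=0}^{n-1}\fk{\la I+T}^k g +\fk{\la I+T}^n f_1^{\la}
\]
for all $n\in\za$ and $0<\la\le q$. As every term on the right is positive, the monotonicity of the lattice norm and the triangle inequality yield
\[
\nrm{f_{n+1}^{\la}}\le \nrm{g}\sum_{k=0}^{n-1}\nrm{\fk{\la I+T}^k}+\nrm{\fk{\la I+T}^n}\nrm{f_1^{\la}}.
\]
By part (2) each $\nrm{\fk{\la I+T}^k}$ is dominated by $\nrm{\fk{qI+T}^k}$, uniformly in $\la\in(0,q]$. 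The decisive ingredient is that $T$ is quasi-nilpotent, so $\sigma(T)=\bk{0}$; by the Spectral Mapping Theorem \cite[Thm. 10.28]{RudinFA} we then have $\sigma\fk{qI+T}=\bk{q}$, whence the spectral radius of $qI+T$ equals $q<1$. Consequently $\nrm{\fk{qI+T}^k}^{1/k}\to q<1$, so $\sum_{k=0}^{\infty}\nrm{\fk{qI+T}^k}=:M<\infty$ and $\sup_k\nrm{\fk{qI+T}^k}=:M'<\infty$. Using the assumed bound $\sup_{0<\la\le q}\nrm{f_1^{\la}}=:C<\infty$, I conclude $\nrm{f_{n+1}^{\la}}\le\nrm{g}M+M'C$ uniformly in $n\in\za$ and $0<\la\le q$, which is exactly the claimed boundedness.

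The main obstacle lies entirely in part (3): one must verify that the order inequalities propagate correctly through the positive operators $\la I+T$ \emph{before} taking norms, and that quasi-nilpotency of $T$, via the spectral mapping theorem, delivers a spectral radius strictly below $1$ for the perturbed operator $qI+T$ that can be used uniformly for all $0<\la\le q$. Once the geometric series $\sum_k\nrm{\fk{qI+T}^k}$ is seen to converge, the uniform bound follows at once.
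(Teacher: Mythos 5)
Your proof is correct and follows essentially the same route as the paper's: the same induction upgrading $Tx\le Sx$ on positive elements to $T^n\btr{x}\le S^n\btr{x}$ with the lattice norm for part (1), part (2) as an immediate consequence, and in part (3) the same unwinding of the recursion into $f_{n+1}^{\la}\le\sum_{k=0}^{n-1}(\la I+T)^k g+(\la I+T)^n f_1^{\la}$ combined with $r(qI+T)\le q<1$ from quasi-nilpotency. You are in fact slightly more explicit than the paper in justifying, via the Gelfand formula, why $\sum_k\nrm{(qI+T)^k}$ converges, a step the paper leaves implicit.
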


\begin{proof} Let $x\in B_X.$
The first claim is an induction. Let $ n=1,$   by the positivity of $T$  \cite[p. 58, -2]{SchaeferLattice}, and the prerequisite $T\le S$ we have

$$ \btr{Tx} \le  T\btr{x} \le S\btr{x}.$$ 
As X is Banach lattice \cite[Def. 5.1, p. 81]{SchaeferLattice} we have 
$\btr{x}\le\btr{y} $ implies $\nrm{x}\le \nrm{y},$ and obtain by the assumption $ \nrm{\btr{x}}\le\nrm{x},$ that $\nrm{Tx}\le \nrm{S\btr{x}} \le\nrm{S}.$ Thus we have $\nrm{T}\le\nrm{S}.$ \\
$n\to n+1:$ 

By the positivity of $T^{n+1}$, and the hypothesis 
$
T^n\btr{x} \le S^n\btr{x}.
$
we have
$$
\btr{T^{n+1}x} \le T^{n+1}\btr{x}=TT^n\btr{x} \le TS^n \btr{x} \le S^{n+1}\btr{x}.
$$
Again the Banach lattice property \cite[Def. 5.1, p. 81]{SchaeferLattice} leads to
$\nrm{T^{n+1}x}\le \nrm{S^{n+1}\btr{x}}\le \nrm{S^{n+1}}$
which finishes the induction.
The second claim is direct consequence of claim 1.
As $T$ in (3) is assumed to be quasinilpotent, the spectral radius $r(\la I+T)\le r(qI+T)\le q.$ 
The recursion (\ref{boundedness-recursion}) leads with an induction to,
\begin{eqnarray*}
f_{n+1}^{\la}&\le&\sum_{k=0}^{n-1}(\la I+T)^k g + (\la I +T)^n f^{\la}_1,\\
&\le&\sum_{k=0}^{n-1}\nrm{(\la I+T)^k} \nrm{g}+ \nrm{(\la I +T)^n} \nrm{f^{\la}_1},\\
&\le&\sum_{k=0}^{n-1}\nrm{(qI+T)^k} \nrm{g}+ \nrm{(qI +T)^n} \nrm{f^{\la}_1},\\
\end{eqnarray*}
which proves the claim 3.
\end{proof}
From \cite{Kreulichevo} we have the following Proposition.
\begin{pro} \label{bd-limsup-Lipschitz}
Let $f\in C((0,T),X),$ s.t.
$$ \limsup_{s \to 0} \frac{1}{\btr{s}}\nrm{f(t+s)-f(t)}\le L,\quad \forall t\in(0,T),$$
then $f$ is Lipschitz with a Lipschitz-constant less or equal $L.$
\end{pro}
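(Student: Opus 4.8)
The plan is to upgrade the pointwise infinitesimal bound into a global one, reducing the vector-valued statement to a scalar monotonicity argument. First I would fix two points $a,b \in (0,T)$ with $a<b$ and aim to show $\nrm{f(b)-f(a)} \le L(b-a)$; since $a<b$ are arbitrary, this yields the Lipschitz property with constant $L$. To this end set $g(t):=\nrm{f(t)-f(a)}$, a continuous real-valued function on $[a,b]$ with $g(a)=0$.

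Next I would transfer the hypothesis from $f$ to $g$. For $\sigma>0$ the triangle inequality gives $g(t+\sigma)-g(t) \le \nrm{f(t+\sigma)-f(t)}$, hence the upper right Dini derivative satisfies
$$
D^+g(t):=\limsup_{\sigma\to 0^+}\frac{g(t+\sigma)-g(t)}{\sigma} \le \limsup_{\sigma\to 0^+}\frac{\nrm{f(t+\sigma)-f(t)}}{\sigma}\le L
$$
for every $t\in[a,b)$, the last bound being the hypothesis restricted to positive increments.

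The core of the proof is the scalar lemma that a continuous function whose upper right Dini derivative is bounded by a constant is itself controlled by that constant. Concretely, I would put $h(t):=g(t)-Lt$, so that $D^+h(t)\le 0$ on $[a,b)$, and show $h(b)\le h(a)$. Fixing $\ep>0$, consider the set $\mathcal E=\bk{t\in[a,b]: h(\tau)\le h(a)+\ep(\tau-a)\ \forall \tau\in[a,t]}$ and let $s=\sup\mathcal E$. Continuity of $h$ forces $s\in\mathcal E$; and if $s<b$, then from $D^+h(s)\le 0<\ep$ one finds $\sigma>0$ with $h(s+\sigma)<h(s)+\ep\sigma\le h(a)+\ep(s+\sigma-a)$, so that $s+\sigma\in\mathcal E$, contradicting the supremum. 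Hence $s=b$, giving $h(b)\le h(a)+\ep(b-a)$, and letting $\ep\to 0^+$ yields $h(b)\le h(a)$. Unwinding the definition, $g(b)-Lb\le g(a)-La=-La$, that is $\nrm{f(b)-f(a)}=g(b)\le L(b-a)$, which completes the argument.

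The routine parts, namely the reduction and the Dini-derivative transfer via the triangle inequality, are immediate. The only genuine obstacle is the monotonicity lemma in the third step: the supremum–continuity argument must be arranged so that the strict inequality $D^+h(s)<\ep$ produces a point strictly beyond $s$ still lying in $\mathcal E$, and one must remember to reintroduce the auxiliary slope $\ep$ and only remove it at the very end, so that the final constant is exactly $L$ rather than $L+\ep$.
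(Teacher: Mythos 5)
Your proof is correct. Note that the paper itself gives no proof of this proposition: it is imported verbatim from the reference \cite{Kreulichevo} (``From \cite{Kreulichevo} we have the following Proposition''), so there is no in-paper argument to compare against; your self-contained reduction to the scalar Dini-derivative monotonicity lemma is the standard route for exactly this statement. One small point of exposition: in the supremum argument, membership $s+\sigma\in\mathcal E$ requires the inequality $h(\tau)\le h(a)+\ep(\tau-a)$ for \emph{all} intermediate $\tau\in(s,s+\sigma]$, not only at the endpoint; this is supplied automatically by the definition of the limsup, since $D^+h(s)<\ep$ yields $\delta>0$ with $h(s+\sigma)<h(s)+\ep\sigma$ for every $\sigma\in(0,\delta]$, and then each $\tau\in(s,s+\delta]$ satisfies $h(\tau)<h(s)+\ep(\tau-s)\le h(a)+\ep(\tau-a)$. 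With that sentence added, the argument is complete, and correctly uses only the one-sided part of the two-sided hypothesis.
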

As a direct consequence we have:

\begin{lem}  \label{limsup_sup_compare}
Let $f$ Lipschitz on $(a,b),$ then
\begin{eqnarray*}
\lefteqn{\inf {\bk{L>0 : \nrm{f(\alpha)-f(\beta)} \le L \btr{\alpha-\beta}, \ \forall \ a<\alpha,\beta<b }} } \\
&\le& \sup_{\alpha\in(a,b)} \limsup_{s\to 0} \frac{1}{\btr{s}}\nrm{f(\alpha-s)-f(\alpha)}
\end{eqnarray*}
and 
\begin{eqnarray*}
\sup_{\alpha\in(a,b)}\limsup_{s\to 0}\frac{1}{\btr{s}}\nrm{f(\alpha-s)-f(\alpha)} 
&\ge& \limsup_{s\to 0} \sup_{\alpha\in(a,b)}\frac{1}{\btr{s}}\nrm{f(\alpha-s)-f(\alpha)}.
\end{eqnarray*}
\end{lem}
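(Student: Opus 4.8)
The plan is to route both inequalities through the optimal (smallest) Lipschitz constant of $f$,
$$
L_f := \inf\bk{L>0 : \nrm{f(\alpha)-f(\beta)} \le L\btr{\alpha-\beta}, \ \forall\, a<\alpha,\beta<b},
$$
which is exactly the left-hand side of the first claimed inequality. Since $f$ is assumed Lipschitz, this infimum is attained as an admissible constant, so that $\nrm{f(\alpha)-f(\beta)}\le L_f\btr{\alpha-\beta}$ for all $\alpha,\beta\in(a,b)$; this single structural fact is what I will use on both sides, in opposite directions.

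For the first inequality I would write $L^*:=\sup_{\alpha\in(a,b)}\limsup_{s\to 0}\frac{1}{\btr{s}}\nrm{f(\alpha-s)-f(\alpha)}$ for its right-hand side and note that, by definition of the supremum, $\limsup_{s\to 0}\frac{1}{\btr{s}}\nrm{f(\alpha-s)-f(\alpha)}\le L^*$ for every $\alpha$. Because $s$ runs over a two-sided punctured neighbourhood of $0$, the substitution $s\mapsto -s$ leaves this $\limsup$ unchanged, so the pointwise hypothesis of Proposition \ref{bd-limsup-Lipschitz} holds with bound $L^*$ at each point of $(a,b)$. That proposition then yields that $f$ is Lipschitz with a constant at most $L^*$, hence $L_f\le L^*$, which is precisely the first inequality.

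For the second inequality I would use the Lipschitz bound in the reverse direction: for each fixed $s\ne 0$ and each $\alpha$ one has $\frac{1}{\btr{s}}\nrm{f(\alpha-s)-f(\alpha)}\le L_f$, whence $\sup_{\alpha}\frac{1}{\btr{s}}\nrm{f(\alpha-s)-f(\alpha)}\le L_f$, and taking $\limsup_{s\to 0}$ gives $\limsup_{s\to 0}\sup_{\alpha}\frac{1}{\btr{s}}\nrm{f(\alpha-s)-f(\alpha)}\le L_f$. Combining this with the first inequality $L_f\le L^*$ chains to the asserted $\sup_{\alpha}\limsup_{s\to 0}\frac{1}{\btr{s}}\nrm{f(\alpha-s)-f(\alpha)}\ \ge\ \limsup_{s\to 0}\sup_{\alpha}\frac{1}{\btr{s}}\nrm{f(\alpha-s)-f(\alpha)}$.

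There is no genuine obstacle here, consistent with the lemma being billed as a direct consequence: all the analytic content is already packaged in Proposition \ref{bd-limsup-Lipschitz}, and the two inequalities merely sandwich $L_f$ from above and below. The only points deserving a word of care are that the defining infimum of $L_f$ is attained (immediate, since the set of admissible Lipschitz constants is the closed ray $[L_f,\infty)$) and that the one-sided and two-sided $\limsup$ agree under the factor $\frac{1}{\btr{s}}$; both are routine. I would also remark that the reverse of the second claim, $\sup_{\alpha}\limsup_{s\to 0}\le\limsup_{s\to 0}\sup_{\alpha}$, holds for an arbitrary function by monotonicity of the supremum, so the lemma in effect records that the two iterated limits coincide for Lipschitz $f$.
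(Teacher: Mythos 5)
Your proof is correct and follows essentially the same route as the paper: the first inequality is exactly the paper's direct application of Proposition \ref{bd-limsup-Lipschitz}, and the second is obtained, as in the paper, by bounding $\sup_{\alpha}\frac{1}{\btr{s}}\nrm{f(\alpha-s)-f(\alpha)}$ for fixed $s$ by the infimal Lipschitz constant and chaining with the first inequality before passing to the $\limsup$. Your added remarks (attainment of the infimum, invariance under $s\mapsto -s$, and the trivial reverse inequality) are accurate but not needed beyond what the paper already records.
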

\begin{proof}
The first claim is a direct consquence of Proposition \ref{bd-limsup-Lipschitz}. For the second claim note that,

\begin{eqnarray*}
\lefteqn{ \sup_{\alpha, \alpha -s \in (a,b)}\frac{1}{\btr{s}}\nrm{f(\alpha-s)-f(\alpha)} } \\
&\le&\inf\bk{L>0:\nrm{f(\alpha)-f(\beta)}\le L\btr{\alpha-\beta}, \ \forall \ a<\alpha,\beta<b}, \\
&\le&  \sup_{\alpha\in(a,b)}\limsup_{s\to 0}\frac{1}{\btr{s}}\nrm{f(\alpha-s)-f(\alpha)}.
\end{eqnarray*}
Now, we can pass to the $\limsup$ on the left hand side.
\end{proof}

\end{document}